\pgfplotsset{compat=1.15}
\newcommand{\Ccal}{\mathcal{C}}
\newcommand{\fn}{\mathbf{n}}
\newcommand{\fl}{\mathbf{\lambda}}
\numberwithin{equation}{section}
\theoremstyle{plain}
\newtheorem{theorem}{Theorem}[section]
\newtheorem{proposition}[theorem]{Proposition}
\newtheorem{corollary}[theorem]{Corollary}
\newtheorem{lemma}[theorem]{Lemma}
\theoremstyle{definition}
\newtheorem{definition}[theorem]{Definition}
\newtheorem{setup}[theorem]{Setup}
\newtheorem{remark}[theorem]{Remark}
\newtheorem{notation}[theorem]{Notation}
\newtheorem{example}[theorem]{Example}
\newtheorem{question}{Question}
\newcommand\restr[2]{{% we make the whole thing an ordinary symbol
  \left.\kern-\nulldelimiterspace % automatically resize the bar with \right
  #1 % the function
  \vphantom{\small|} % pretend it's a little taller at normal size
  \right|_{#2} % this is the delimiter
  }}
\newcommand{\vs}{\vspace*{.5em}}
\def\mathcenterto#1#2{\mathclap{\phantom{#1}\mathclap{#2}}\phantom{#1}}
\let\old@widetilde\widetilde
\def\widetildeto#1#2{\mathcenterto{#2}{\old@widetilde{\mathcenterto{#1}{#2\,}}}}
\let\old@widehat\widehat
\def\widehatto#1#2{\mathcenterto{#2}{\old@widehat{\mathcenterto{#1}{#2\,}}}}
\newtheorem{theoremA}{Theorem}
\newtheorem{theoremB}{Theorem}
\newtheorem{theoremC}{Theorem}
\newtheorem{theoremD}{Theorem}
\newcommand{\is}[1]{\langle  #1 \rangle} % \ip a b le pone < >
\newcommand{\abs}[1]{\left| #1 \right|} % \abs a b le pone | |
\newcommand{\size}[1]{\left| #1 \right|} % \size a b le pone | |
\newcommand{\pare}[1]{\left( #1 \right)} % \pare a b le pone ( )
\newcommand{\set}[1]{{\left\{ #1 \right\}}} % \set a b le pone { }
\newcommand{\corch}[1]{\left[ #1 \right]} % \pare a b le pone [ ]
\newcommand*\closure[1]{\overline{#1}}
\DeclareMathOperator{\rank}{rk}
\DeclareMathOperator{\im}{im}
\def\dim{\operatorname{dim}}
\DeclareMathOperator{\CC}{\mathbb{C}}
\DeclareMathOperator{\ZZ}{\mathbb{Z}}
\setlist[itemize]{noitemsep}
\title{%Hypergraph Varieties: Irreducible Decomposition and Applications to Conditional Independence Models
Decomposing Conditional Independence Ideals with Hidden Variables: A Matroid-Theoretic Approach}
\author{Emiliano Liwski}
\date{\today}
\begin{document}
\maketitle

\begin{abstract}
We study a class of determinantal ideals arising from conditional independence (CI) statements with hidden variables. Such CI statements translate into determinantal conditions on a matrix whose entries represent the probabilities of events involving the observed random variables. Our main objective is to determine the irreducible components of the corresponding varieties and to provide a combinatorial or geometric interpretation of each. We achieve this by introducing a new approach rooted in matroid theory. In particular, we introduce a new class of matroids, which we term quasi-paving matroids, and show that the components of these determinantal varieties are precisely matroid varieties of quasi-paving matroids. Moreover, we derive generating functions that encode the number of irreducible components of these CI ideals.

%We study hypergraph varieties and their irreducible decompositions, introducing the framework of quasi-paving matroids. This approach enables us to explicitly determine the irreducible decomposition of hypergraph varieties associated with a specific class of paving matroids. Utilizing these results, we provide a concrete decomposition for a notable family of hypergraph varieties arising in conditional independence (CI) models with hidden variables.

%We study %paving matroids 
%hypergraph varieties and their %matroid and circuit varieties
%irreducible decomposition, introducing the family of quasi-paving matroids in this framework. Through this new perspective, we obtain the irreducible decomposition of %circuit varieties
%hypergraph varieties associated with a specific class of paving matroids. Using this we explicetely find the irreducible decomposition of %two notable families of matroids, including 
%a notable class of hypergraph varieties related to conditional independence (CI) models with hidden variables. %Additionally, we offer simpler geometric proofs of the main results of \cite{clarke2020conditional}, which are also connected to these models.
\end{abstract}

{\hypersetup{linkcolor=black}
{\tableofcontents}}

%\vspace{50pt}

\section{Introduction}

\subsection{Motivation}\label{motivation}

This work focuses on conditional independence (CI) statements involving hidden variables and their connections to matroid theory. In particular, our aim is to compute the irredundant irreducible decomposition of CI ideals. We begin with a concise overview of both conditional independence ideals and matroids.

\medskip
\noindent
\textbf{\large Conditional independence ideals.}

\medskip

Conditional independence models play a central role in algebraic statistics \cite{Studeny05:Probabilistic_CI_structures}, as they capture the relationships among random variables by encoding both conditional dependencies and independencies. A fundamental question in this setting is to determine the set of distributions that satisfy the given CI constraints. The problem becomes significantly more intricate when some variables are unobserved or hidden, introducing additional layers of complexity. In particular, a key challenge is to identify the constraints on the observed variables that emerge from dependencies involving hidden ones \cite{Steudel-Ay}.

This leads naturally to an algebraic formulation: CI statements correspond to the solutions of certain systems of polynomial equations \cite{Sullivant}. These equations generate the so-called CI ideal, and the distributions compatible with the model correspond to the intersection of this ideal with the probability simplex. When no hidden variables are involved, these equations are binomials, and the corresponding ideals have been widely studied; see, for example, \cite{herzog2010binomial, Rauh}. However, the presence of hidden variables greatly increases the complexity of the ideal, typically producing polynomials of arbitrary degree that are difficult to analyze and compute \cite{pfister2019primary}. 

\medskip

Let $X$, $Y_{1}$, $Y_{2}$ be observed random variables, and $H_{1}$, $H_{2}$ be hidden random variables, each taking values in finite sets $\mathcal{X}$, $\mathcal{Y}_{1}$, $\mathcal{Y}_{2}$, $\mathcal{H}_{1}$, $\mathcal{H}_{2}$, respectively, with cardinalities $\lvert \mathcal{X} \rvert = n$, $\lvert \mathcal{Y}_{1} \rvert = k$, $\lvert \mathcal{Y}_{2} \rvert = l$, $\lvert \mathcal{H}_{1} \rvert = s-1$, and $\lvert \mathcal{H}_{2} \rvert = t-1$. We consider the collection of conditional independence (CI) statements 
\begin{equation}\label{model}
\mathcal{C} = \{X \mathrel{\perp\!\!\!\perp} Y_{1} \mid \{ Y_{2}, H_{1} \}, X \mathrel{\perp\!\!\!\perp} Y_{2} \mid \{ Y_{1}, H_{2} \}\}.
\end{equation}

Our objective is to investigate the conditional independence ideal associated with this model. We introduce this ideal in the following definition.

\begin{definition}[CI ideal]\label{setup}
We define the $k \times l$ matrix of integers
\begin{equation}\label{matri}
\mathcal{Y} = \begin{pmatrix}
1 & k+1 & \cdots & (l-1)k+1 \\
2 & k+2 & \cdots & (l-1)k+2 \\
\vdots & \vdots & \ddots & \vdots \\
k & 2k & \cdots & lk
\end{pmatrix}.
\end{equation}

For each $i \in [k]$ and $j \in [l]$, we denote the supports of the rows and columns of $\mathcal{Y}$ by
\begin{align*}
R_{i} &= \{ i, k+i, \ldots, (l-1)k + i \}, \\
C_{j} &= \{ (j-1)k + 1, \ldots, (j-1)k + k \}.
\end{align*}

We denote by $R=\{R_{1},\ldots,R_{k}\}$ and $C=\{C_{1},\ldots,C_{l}\}$ the set of rows and columns of $\mathcal{Y}$, respectively. Given integers $s \leq k$ and $t \leq l$, we define the collection of subsets of $[kl]$:
\[
\Delta^{s,t} = \bigcup_{i \in [k]} \binom{R_i}{t} \cup \bigcup_{j \in [l]} \binom{C_j}{s}.
\]

Fix a positive integer $n \geq \max\{s,t\}$, and let $X = (x_{i,j})$ be an $n \times kl$ matrix of indeterminates. We define:

\vspace*{-.5em}

\hspace*{-1cm}\parbox{\linewidth}{
\begin{itemize}[label=$\blacktriangleright$]
\item The \emph{determinantal hypergraph ideal} of $\Delta^{s,t}$ as
\[
I_{\Delta^{s,t}} = \left\langle \left[ A \mid B \right]_X : A \subset [n], \, B \in \Delta^{s,t}, \, |A| = |B| \right\rangle \subset \CC[X],
\]
where $\left[ A \mid B \right]_X$ denotes the minor of $X$ with rows indexed by $A$ and columns indexed by $B$.\vs
\item The associated variety, which is the zero set of $I_{\Delta^{s,t}}$, is
\[
V_{\Delta^{s,t}} = \left\{ Y \in \CC^{n \times kl} : \operatorname{rank}(Y_F) < |F| \ \text{for each } F \in \Delta^{s,t} \right\},
\]
where $Y_F$ is the submatrix of $Y$ with columns indexed by $F$.
\end{itemize}}
The ideal $I_{\Delta^{s,t}}$ coincides with the conditional independence ideal associated to the model described in~\eqref{model}; see \cite{clarke2020conditional,clarke2022conditional}.
\end{definition}

The main question addressed in this work is the following:

\begin{question}\label{question decomposition}
Determine the irredundant irreducible decomposition of the variety $V_{\Delta^{s,t}}$.
\end{question}

\begin{comment}
\begin{example}
Consider the case where $s=t=n$ and $k,l\geq n$. The joint distribution of $Y_{1}$ and $Y_{2}$ has state space $\mathcal{Y}=\mathcal{Y}_{1}\times \mathcal{Y}_{2}$, which can be identified with the $k\times l$ matrix $\mathcal{Y}$ whose entries take values in the set $[kl]$, as described in~\eqref{matri}. For this model, the variety corresponding to the CI ideal $I_{\Delta}$ is precisely the circuit variety of the $n$-paving matroid $G_{k,l}$ from Definition~\ref{matroid gkl}. The irreducible decomposition of this variety is described in Theorem~\ref{Theorem 2}.
\end{example}
\end{comment}

For large values of $n$, determining the irreducible decomposition of the associated variety $V_{\Delta^{s,t}}$ becomes a substantial challenge. Nonetheless, several works have addressed Question~\ref{question decomposition} in specific instances of the parameters $s, t, k, l, n$: 

\vspace*{-.5em}

\hspace*{-1cm}\parbox{\linewidth}{
\begin{itemize}[label=$\blacktriangleright$]
\item In \cite{clarke2020conditional}, the irreducible decomposition of $V_{\Delta^{s,t}}$ was obtained for the case $s=2$ and $t=l$.\vs
\item The case $t=3$ was studied in \cite{clarke2022conditional}.\vs
\item In \cite{pfister2019primary}, the authors developed a method to compute the primary decomposition of $I_{\Delta^{s,t}}$ in the case $s=t=k=n=3$ and $l=4$. However, their computations reach the limits of current computer algebra systems. More recently, \cite{Fatemeh3} provided the explicit irreducible decomposition of the variety corresponding to this ideal.\vs
\item Cases such as $t = l$ and $t = 2$ have been studied in \cite{clarke2021matroid}.\vs
\item The case $s=k=2$ and $n=t$ has been recently investigated in \cite{alexandr2025decomposing}.
\end{itemize}}

In this work, we focus on the case $s=t=n$. 

\begin{question}\label{question stn}
Determine the irredundant irreducible decomposition of the variety $V_{\Delta^{s,t}}$ when $s=t=n$.
\end{question}

We emphasize that current computer algebra systems are unable to resolve Question~\ref{question stn}. To address this problem, we adopt a matroid-theoretic perspective. Using this perspective, we obtain a partial answer for arbitrary $n$ and a complete, explicit answer when $n=3$.

\medskip
\noindent
\textbf{\large Matroids.}

\medskip

Matroids provide a combinatorial framework for capturing linear dependence in vector spaces~\cite{whitney1992abstract, Oxley, piff1970vector}. Given a finite collection of vectors, the linearly dependent subsets determine a matroid.

To address Question~\ref{question stn}, we adopt an approach rooted in matroid theory. In particular, we work with two varieties naturally associated to a matroid $M$: the \emph{matroid variety} $V_{M}$ and the \emph{circuit variety} $V_{\mathcal{C}(M)}$.
We next introduce the definition of these varieties. %We refer the reader to Subsection~\ref{matroids} for the relevant background on matroids.

\begin{definition}\normalfont\label{def: real}
Let $M$ be a matroid of rank $n$ on $[d]$. A \emph{realization} of $M$ is a tuple of vectors $\gamma=(\gamma_{1},\ldots,\gamma_{d})$ in $\CC^{n}$ such that
\[
\{i_{1},\ldots,i_{p}\}\ \text{is dependent in $M$} 
\;\Longleftrightarrow\; 
\{\gamma_{i_{1}},\ldots,\gamma_{i_{p}}\}\ \text{is linearly dependent}.
\]
Equivalently, $\gamma$ may be interpreted as a matrix in $\CC^{n\times d}$ whose columns are $\gamma_{1},\ldots,\gamma_{d}$. 

The \emph{realization space} of $M$ is
\[
\Gamma_{M} \;=\; \{\gamma \in \CC^{nd} \mid \text{the column matroid of $\gamma$ realizes $M$}\},
\]
where $\CC^{nd}$ has coordinate ring 
\[
S = \CC[x_{ij} \mid 1 \leq i \leq n,\, 1 \leq j \leq d].
\]
Thus, each element of $\Gamma_{M}$ corresponds to an $n \times d$ matrix over $\CC$, or equivalently, to a tuple of $d$ vectors in $\CC^n$.  
A matroid is called \emph{realizable} if its realization space is non-empty.  
The \emph{matroid variety} $V_{M}$ is defined as the Zariski closure of $\Gamma_{M}$ in $\CC^{nd}$.
\end{definition}

We emphasize that this notion of matroid variety differs from another common one, where matroid varieties are viewed as subsets of a Grassmannian.
%the set 
%$\Gamma_{M}=\{\gamma\subset \CC^{n}: \gamma \ \text{is a realization of $M$}\}$.
%The matroid variety $V_M$ is defined as the Zariski closure of $\Gamma_M$, endowed with a rich geometric structure. 
%Introduced in \cite{gelfand1987combinatorial}, matroid varieties have since been extensively studied~\cite{clarke2021matroid, Vakil, sidman2021geometric, Fatemeh4, sturmfels1989matroid, feher2012equivariant, knutson2013positroid}. In this work, we study the circuit variety $V_{\Ccal(M)}$ of $M$, defined in terms of its circuits, which are the minimal dependent sets of $M$.

Matroid varieties are notable geometric objects lying at the intersection of algebraic geometry and combinatorics. Introduced in \cite{gelfand1987combinatorial}, they have since been extensively studied; see, for instance, \cite{clarke2021matroid,Fatemeh4,LIWSKI2026102484,sidman2021geometric,liwski2025algebraic}. Defined as the Zariski closure of matroid realization spaces, matroid varieties provide a natural geometric extension of matroid theory. The Mnëv-Sturmfels Universality Theorem \cite{mnev1988universality} reveals their complexity, establishing that matroid varieties satisfy ``Murphy's Law in Algebraic Geometry,'' meaning that every singularity type of a semi-algebraic set can be realized within a matroid variety.

\begin{definition}\normalfont\label{cir}
Let $M$ be a matroid of rank $n$ on $[d]$. 
Consider the $n \times d$ matrix $X = (x_{i,j})$ of indeterminates. The \emph{circuit ideal} of $M$ is defined as
\[
I_{\Ccal(M)} = \{ [A|B]_X : B \in \Ccal(M),\ A \subset [n],\ \text{and } |A| = |B| \},
\]
where $[A|B]_X$ denotes the minor of $X$ with rows indexed by $A$ and columns indexed by $B$.  

A tuple of vectors $\gamma = (\gamma_1, \ldots, \gamma_d)$ in $\CC^n$ is said to \emph{include the dependencies} of $M$ if
\[
\{i_1, \ldots, i_k\} \text{ is a dependent set of $M$} \ \Longrightarrow \ \{\gamma_{i_1}, \ldots, \gamma_{i_k}\} \text{ is linearly dependent}.
\]

The \emph{circuit variety} of $M$ is defined as
\[
V_{\Ccal(M)} = V(I_{\Ccal(M)}) = \bigl\{\gamma = (\gamma_1, \ldots, \gamma_d) \in \CC^{nd} : \gamma \text{ includes the dependencies of $M$} \bigr\}.
\]

Its ambient space is $\CC^{nd}$, and its coordinate ring is
\[
S = \CC[x_{ij} \mid 1 \leq i \leq n,\, 1 \leq j \leq d].
\]
\end{definition}

\begin{comment}
\begin{definition}\label{first definition}
Let $M$ be a matroid of rank $n$ on the ground set $[d]$.
\begin{itemize}
\item A tuple of vectors $\gamma = (\gamma_1, \ldots, \gamma_d)$ in $\CC^n$ is called a \emph{realization} of $M$ if
\[
\{i_{1},\ldots,i_{p}\} \text{ is a dependent set of } M \quad \Longleftrightarrow \quad \{\gamma_{i_{1}},\ldots,\gamma_{i_{p}}\} \text{ is linearly dependent.}
\]
The set of all realizations of $M$ forms its \emph{realization space}, denoted by
\[
\Gamma_{M} = \big\{ \gamma = (\gamma_1, \ldots, \gamma_d)\in (\CC^{n})^{d} : \gamma \text{ is a realization of } M \big\} \subset \CC^{nd}.
\]
The \emph{matroid variety} associated to $M$ is the Zariski closure $V_M = \overline{\Gamma_M}$.
\item A collection $\gamma = \{\gamma_1, \ldots, \gamma_d\} \subset \CC^n$ is said to \emph{include the dependencies} of $M$ if
\[
\{i_{1},\ldots,i_{p}\} \text{ is a dependent set of } M \quad \Longrightarrow \quad \{\gamma_{i_{1}},\ldots,\gamma_{i_{p}}\} \text{ is linearly dependent.}
\]
The \emph{circuit variety} of $M$ is defined as
\[
V_{\mathcal{C}(M)} = \big\{ \gamma \subset \CC^n : \gamma \text{ includes the dependencies of } M \big\} \subset \CC^{nd}.
\]
\end{itemize}
\end{definition}
\end{comment}

Circuit varieties form a distinguished class of determinantal varieties constructed from the collection of circuits of a matroid. The ideals defining matroid and circuit varieties frequently appear as primary components in decompositions of several families of ideals, including conditional independence ideals \cite{clarke2021matroid,clarke2020conditional,clarke2022conditional,caines2022lattice, mohammadi2018prime, ene2013determinantal}, as well as determinantal ideals studied in commutative algebra \cite{bruns2003determinantal,clarke2020conditional,clarke2021matroid,herzog2010binomial,pfister2019primary}. This underlines the central role of matroid and circuit varieties in the study of Question~\ref{question stn}. Decomposing the circuit variety $V_{\Ccal(M)}$ is generally a challenging problem. 
This question was recently approached in~\cite{liwski2025efficient} using a combinatorial strategy based on an algorithm for computing the minimal matroids of $M$. 
However, this approach is not guaranteed to succeed, and in general, a complete decomposition of circuit varieties remains unknown.

\medskip
\noindent
\textbf{\large Connection between CI ideals and matroids}

\medskip

The following result from \cite{clarke2021matroid} establishes a direct link between the variety $V_{\Delta^{s,t}}$ and the notions of matroid and circuit varieties.

\begin{theorem}\textup{\cite[Theorem~7.5]{clarke2021matroid}}\label{theorem grid}
Let $s, t, k, l, n$ be positive integers satisfying $3 \le s \le t \le l$, $s \le k$, and $t \le n \le s+t-3$. Then 
\[
\mathcal{C} = \min\big( \Delta^{s,t} \cup \textstyle\binom{[kl]}{n+1} \big)
\]
defines the collection of circuits of a realizable matroid $M$ on $[kl]$ of rank $n$, where 
$\min$ denotes the inclusion-minimal subsets. Moreover, %$M$ is the unique minimal matroid associated to $\Delta^{s,t}$, and 
the corresponding varieties coincide:
\[
V_{\Delta^{s,t}} = V_{\mathcal{C}(M)}.
\]
\end{theorem}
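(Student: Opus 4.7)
The plan is to construct an explicit realization of a matroid $M$ whose circuits are exactly $\mathcal{C}$; this simultaneously establishes that $\mathcal{C}$ forms the circuits of a matroid, that this matroid is realizable of rank $n$, and makes the variety equality nearly automatic. I would take generic linear subspaces $U_{1},\ldots,U_{k}\subset\CC^{n}$ of dimension $t-1$ and $W_{1},\ldots,W_{l}\subset\CC^{n}$ of dimension $s-1$. The hypothesis $n\le s+t-3$ guarantees that generically $\dim(U_{i}\cap W_{j})=(t-1)+(s-1)-n\ge 1$, so I can pick a generic nonzero vector $v_{(i,j)}\in U_{i}\cap W_{j}$ for each $(i,j)\in[k]\times[l]$. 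Assembled into an $n\times kl$ matrix $\gamma$, with the column indexed by $(j-1)k+i$ set equal to $v_{(i,j)}$, these vectors realize the candidate matroid $M$.

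Next I would verify that the circuits of $M$ are precisely $\mathcal{C}$. The dependence direction is immediate from the construction: every $t$-subset of $R_{i}$ lies in the $(t-1)$-dimensional space $U_{i}$; every $s$-subset of $C_{j}$ lies in the $(s-1)$-dimensional $W_{j}$; every $(n+1)$-subset is dependent since the ambient space is $\CC^{n}$. The minimality direction requires showing that for generic parameter choices, every subset $S\subseteq[kl]$ with $|S|\le n$, $|S\cap R_{i}|\le t-1$ for all $i$, and $|S\cap C_{j}|\le s-1$ for all $j$, consists of linearly independent vectors. Equivalently, for each such $S$ the corresponding $|S|\times|S|$ minor of $\gamma$ should be a nonzero polynomial in the parameters; since there are only finitely many admissible $S$, it suffices to produce a single parameter specialization on which all these minors are simultaneously nonzero.

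Once the circuits of $M$ are identified with $\mathcal{C}$, the rank of $M$ is exactly $n$: the ambient dimension forces rank $\le n$, while an admissible set of size $n$ (easily exhibited, e.g., as an L-shape consisting of $s-1$ elements in one column together with $t-2$ additional elements in one row) witnesses rank $\ge n$. The variety equality then follows by comparing generators of the defining ideals. The ideal $I_{\mathcal{C}(M)}$ is generated by minors $[A\mid B]_{X}$ with $A\subseteq[n]$, $B\in\mathcal{C}(M)$, and $|A|=|B|$, which forces $|B|\le n$; hence the $(n+1)$-element circuits in $\mathcal{C}\setminus\Delta^{s,t}$ contribute no generators, so $I_{\mathcal{C}(M)}$ and $I_{\Delta^{s,t}}$ share the same generating set and $V_{\mathcal{C}(M)}=V_{\Delta^{s,t}}$.

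The main obstacle is the minimality claim in the second paragraph: producing a parameter specialization on which every admissible minor is nonzero. A natural approach is a monomial or toric degeneration, assigning each $v_{(i,j)}$ coordinates that are monomials in a single parameter and using Laplace expansion to reduce the minors to signed monomial sums whose dominant terms can be combinatorially identified via the row/column structure of $S$. Alternatively, one may argue inductively on $|S|$, peeling off an element $e\in S$ and invoking the genericity of $v_{e}$ within $U_{i}\cap W_{j}$ to extend independence. A more structural route is to first prove combinatorially that $\mathcal{C}$ satisfies the circuit exchange axiom (exploiting that distinct rows are disjoint, distinct columns are disjoint, and $|C_{1}\cup C_{2}\setminus\{e\}|\ge s+t-2\ge n+1$ in the mixed row/column case), thereby defining $M$ abstractly as a matroid, and then to verify that the above realization indeed realizes $M$.
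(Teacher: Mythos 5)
First, a contextual remark: the paper does not prove this statement itself; it is quoted directly from \cite[Theorem~7.5]{clarke2021matroid}, so there is no internal argument to compare against (the only trace in the source is a commented-out sketch that likewise defers realizability to that reference). Judging your proposal on its own merits: the framing is reasonable and several pieces are genuinely correct. The dependence direction of your construction is immediate, the identification of an admissible $n$-set to witness rank $n$ is fine, the circuit-elimination check you sketch (rows pairwise disjoint, columns pairwise disjoint, and $|C_1\cup C_2\setminus\{e\}|\ge s+t-2\ge n+1$ in the mixed case) does go through, and your argument for $V_{\Delta^{s,t}}=V_{\mathcal{C}(M)}$ is clean: since every generator $[A\mid B]_X$ of $I_{\mathcal{C}(M)}$ has $|B|\le n$, only the circuits of size at most $n$ contribute, and these are exactly the elements of $\Delta^{s,t}$ (no containments occur among them because a row and a column meet in a single element and $s,t\ge 3$), so the two ideals have the same generating set.

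The genuine gap is the one you yourself flag and then do not close: the claim that for generic $U_i$, $W_j$ and generic $v_{(i,j)}\in U_i\cap W_j$, \emph{every} admissible set $S$ (i.e.\ $|S|\le n$, $|S\cap R_i|\le t-1$, $|S\cap C_j|\le s-1$) is linearly independent. This is not a routine genericity statement, because the vectors $v_{(i,j)}$ are not free parameters: each is constrained to lie in the prescribed intersection $U_i\cap W_j$, so a priori the construction could realize a matroid strictly above $M$ in the dependency order, with some admissible sets dependent for \emph{all} choices of parameters. Ruling this out is exactly the hard content of the cited theorem, and it requires an actual argument — e.g.\ an explicit specialization with all admissible minors nonzero, or a Hall--Rado-type dimension count for the subspace arrangement $\{U_i\cap W_j\}$ showing that each admissible $S$ imposes only compatible constraints. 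You list these as possible strategies ("monomial degeneration", "induction on $|S|$", "prove the circuit axioms first and then verify the realization") but none is carried out, and the last option does not help with realizability at all: knowing $\mathcal{C}$ is abstractly a matroid still leaves open whether your $\gamma$ realizes it. Until the generic-independence step is proved, the realizability of $M$ of rank $n$ — the heart of the statement — remains unestablished.
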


We now specialize to the case of interest, where $s = t = n \le \min\{k,l\}$. Denote $\Delta := \Delta^{s,t}$. Since $n \le s + t - 3$ holds automatically, the hypotheses of Theorem~\ref{theorem grid} are satisfied. It follows that there exists a matroid $G_{k,l}$ with circuits
\[
\mathcal{C} = \min\Big( \Delta \cup \textstyle\binom{[kl]}{n+1} \Big)
\]
such that
\[
V_{\Delta} = V_{\mathcal{C}(G_{k,l})},
\]
see Definition~\ref{matroid gkl}. Consequently, Question~\ref{question stn} reduces to the following:

\begin{question}\label{question gkl}
Determine the irredundant irreducible decomposition of $V_{\mathcal{C}(G_{k,l})}$.
\end{question}

\medskip
\noindent
\textbf{\large Strategy outline and our results.}

\medskip

We now outline our strategy for answering Question~\ref{question gkl}. Remarkably, the matroid $G_{k,l}$ belongs to the class of paving matroids, a particularly well-behaved and extensively studied family. It is conjectured in \cite{mayhew2011asymptotic} that asymptotically almost all matroids are paving; see also \cite{lowrance2013properties, mayhew2011asymptotic, oxley1991ternary, brualdi1972foundations, Oxley, welsh2010matroid}.

\begin{definition}\normalfont \label{pav}
A matroid $M$ of rank $n$ is called a \emph{paving matroid} if every circuit of $M$ has cardinality either $n$ or $n+1$. A \emph{dependent hyperplane} of $M$ is a maximal dependent set in which every subset of $n$ elements forms a circuit.
\end{definition}

We now introduce a new subclass of paving matroids that will play a central role in our discussion.

\begin{definition}\label{2 paving}
A \emph{tame paving matroid} is a paving matroid in which any three dependent hyperplanes have empty intersection.
\end{definition}

In Lemma~\ref{gkl is 2 paving}, we establish that the matroid $G_{k,l}$ is a tame paving matroid. This observation naturally leads to a broader and more ambitious problem, whose resolution would provide an answer to both Question~\ref{question gkl} and, in turn, to Question~\ref{question stn}.

\begin{question}\label{question 2 paving}
Let $M$ be a tame paving matroid. Determine the irreducible components of $V_{\mathcal{C}(M)}$.
\end{question}

In this work, we introduce the class of quasi-paving matroids and employ it to solve Question~\ref{question 2 paving}. As a consequence, we also resolve Question~\ref{question stn}, which was our initial objective.

\medskip

\noindent
\textbf{\large Our contributions.} We now summarize the main result of this paper. In Section~\ref{sec 3}, we introduce the family of \emph{quasi-paving} matroids, defined via a hypergraph construction that extends the class of tame paving matroids.

\begin{theoremA}\normalfont
Let $\mathcal{H} = \{H_{1}, \ldots, H_{k}\}$ be a collection of subsets of $[d]$ such that any three subsets have empty intersection, and let $n \leq d$ be a fixed positive integer. Then the following collection of subsets defines the set of circuits of a matroid:

\begin{itemize}
\item[{\bf (1)}] Every subset of $[d]$ of cardinality $n-1$ that lies in the intersection of two sets in $\mathcal{H}$.
\item[{\bf (2)}] Every subset of $[d]$ of cardinality $n$ contained in some $H_i \in \mathcal{H}$, provided it does not contain any subset of Type~{\bf (1)}.
\item[{\bf (3)}] Every subset of $[d]$ of cardinality $n+1$ that contains neither a subset of Type~{\bf (1)} nor of Type~{\bf (2)}.
\end{itemize}

Matroids arising from this construction are called \emph{quasi-paving} matroids. \hfill{\textup{(Theorem~\ref{quasi})}}
\end{theoremA}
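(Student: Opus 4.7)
The plan is to verify the three circuit axioms for the proposed family of sets. The first axiom (non-emptiness) is automatic as soon as $n\geq 2$, and the second (incomparability) is immediate from the definitions: within each type the sets are equicardinal, a Type~(2) set contains no Type~(1) subset by fiat, and a Type~(3) set contains no Type~(1) or Type~(2) subset. All the substantive work lies in verifying the circuit-elimination axiom.

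For this I would first record three containment observations that follow directly from the definitions: (i) any $(n-1)$-subset of $H_i\cap H_j$ with $i\neq j$ is a Type~(1) circuit; (ii) any $n$-subset of some $H_i$ is either a Type~(2) circuit or contains a Type~(1) subset, and so contains a circuit; and (iii) any $(n+1)$-subset of $[d]$ is either a Type~(3) circuit or contains a Type~(1) or Type~(2) subset, and so contains a circuit. It then suffices, for distinct circuits $C_1, C_2$ with $e \in C_1 \cap C_2$, to show that $T := (C_1 \cup C_2)\setminus\{e\}$ meets the hypothesis of one of (i), (ii), or (iii).

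I would then dispatch the six cases according to the types of $C_1$ and $C_2$. The 3-intersection hypothesis enters precisely when at least one circuit is Type~(1). If $C_1 \subseteq H_a \cap H_b$ and $C_2 \subseteq H_c \cap H_d$, then $e \in H_a\cap H_b\cap H_c$ forces $c \in \{a,b\}$ and likewise $d \in \{a,b\}$, so $\{a,b\}=\{c,d\}$ and $T \subseteq H_a \cap H_b$ satisfies (i). If $C_2 \subseteq H_c$ is Type~(2), the same reasoning yields $c\in\{a,b\}$, so $T \subseteq H_a$; here $|T|\geq n$ because $C_1 \not\subseteq C_2$ (as $C_2$ excludes Type~(1) subsets), so (ii) applies. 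If $C_1$ is Type~(1) and $C_2$ is Type~(3), a short count exploiting $C_1 \not\subseteq C_2$ gives $|T|\geq n+1$ and (iii) applies. When neither circuit is Type~(1), either both lie in a common $H_a$, giving $T \subseteq H_a$ with $|T|\geq n$ and (ii), or the defining exclusions of Types~(2)-(3) force $|T|\geq n+1$ and (iii) applies.

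The main obstacle, and the one sub-case requiring a careful size estimate, is when $C_1, C_2$ are both Type~(2) and contained in distinct $H_a, H_b$: here one must rule out $|T|=n$, which would force $C_1 \cap C_2$ to be an $(n-1)$-subset of $H_a\cap H_b$, i.e.\ a Type~(1) subset of $C_1$, contradicting the defining property of a Type~(2) circuit. Once this parity check is performed and the analogous one for the Type~(2)--Type~(3) case is noted, every case reduces to (i), (ii), or (iii), completing the verification of the circuit axioms.
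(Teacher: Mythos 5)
Your proposal is correct and follows essentially the same route as the paper: a case analysis of the elimination axiom by circuit types, using the three-wise empty intersection to force the two ambient sets to coincide, and resolving the critical subcase of two Type~(2) circuits in distinct $H_a,H_b$ with $|C_1\cap C_2|=n-1$ by observing this would place a Type~(1) subset inside a Type~(2) circuit. Your preliminary checks of non-emptiness and incomparability, and the packaging via the three containment observations, are harmless additions to what the paper does.
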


%For any partition $\mathcal{Q}$ of the dependent hyperplanes of a paving matroid $M$ without points of degree greater than two, there is an associated quasi-paving matroid, denoted $M(\mathcal{Q})$, see Definition~\ref{constr}. Additionally, in Definition~\ref{nice}, we introduce the notion of {\em nice} partitions of the set of dependent hyperplanes of a paving matroid. We find the irreducible decomposition of circuit varieties associated with paving matroids with no points of degree greater than two.

We determine the irreducible components of circuit varieties associated to tame paving matroids, thereby answering Question~\ref{question 2 paving}. We refer to Definitions~\ref{constr} and~\ref{nice} for the relevant notions.

\begin{theoremB}\normalfont\label{theorem B}
Let $M$ be a tame paving matroid. Then the circuit variety $V_{\mathcal{C}(M)}$ has the following irredundant irreducible decomposition:
\begin{equation*}
V_{\mathcal{C}(M)} = \bigcup_{\mathcal{Q}} V_{M(\mathcal{Q})}, \tag{Theorem~\ref{Theorem}}
\end{equation*}
where the union is taken over all nice partitions $\mathcal{Q}$ of the dependent hyperplanes of $M$.
\end{theoremB}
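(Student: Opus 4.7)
The plan is to prove both inclusions in the decomposition, then address irreducibility and irredundancy. For the containment $\bigcup_{\mathcal{Q}} V_{M(\mathcal{Q})} \subseteq V_{\mathcal{C}(M)}$, I would argue that whenever $\mathcal{Q}$ is a nice partition, the associated matroid $M(\mathcal{Q})$ is a weak image of $M$, i.e.\ every circuit of $M$ remains dependent in $M(\mathcal{Q})$. Since $V_{M(\mathcal{Q})}$ is the Zariski closure of tuples realizing $M(\mathcal{Q})$, each such tuple includes all dependencies of $M$; taking the closure and noting that $V_{\mathcal{C}(M)}$ is itself closed gives $V_{M(\mathcal{Q})} \subseteq V_{\mathcal{C}(M)}$. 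This direction should be essentially immediate once the construction $M(\mathcal{Q})$ from Definition~\ref{constr} is in hand, and irreducibility of each $V_{M(\mathcal{Q})}$ follows from the realizability of $M(\mathcal{Q})$ — the quasi-paving structure provided by Theorem~A guarantees $M(\mathcal{Q})$ is a valid matroid, and for nice partitions the realization space $\Gamma_{M(\mathcal{Q})}$ is irreducible, so its closure is irreducible.

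For the reverse inclusion $V_{\mathcal{C}(M)} \subseteq \bigcup_{\mathcal{Q}} V_{M(\mathcal{Q})}$, I would take a point $\gamma \in V_{\mathcal{C}(M)}$ and analyze the matroid $M_\gamma$ of its column tuple. Because $\gamma$ includes all dependencies of $M$ and $M$ is paving of rank $n$, each dependent hyperplane $H_i$ of $M$ has all its $n$-subsets dependent in $M_\gamma$, so $H_i$ is contained in some rank-$(n-1)$ flat of $M_\gamma$. Define an equivalence relation on $\{H_1,\ldots,H_k\}$ by declaring $H_i \sim H_j$ when they share such a flat of $M_\gamma$, and let $\mathcal{Q}$ be the resulting partition. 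The tame hypothesis (any three $H_i$ have empty intersection) is exactly what prevents pathological merges: pairwise intersections in $M_\gamma$ cannot force three distinct dependent hyperplanes to collapse into an object violating the quasi-paving structure of Theorem~A. This is the crucial place where tameness is used, and it should certify that $\mathcal{Q}$ is nice in the sense of Definition~\ref{nice}. By construction, $\gamma$ realizes a matroid coarser than or equal to $M(\mathcal{Q})$, hence $\gamma \in V_{M(\mathcal{Q})}$.

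The main obstacle I anticipate is this forward step: verifying that the partition $\mathcal{Q}$ extracted from $M_\gamma$ is genuinely nice, not merely a set-theoretic partition. One must check that the unions of the $H_i$ in each block of $\mathcal{Q}$ satisfy the hypergraph hypothesis of Theorem~A — i.e.\ that among the merged hyperplanes, any three distinct blocks still have pairwise or higher-order intersection patterns compatible with the quasi-paving axioms. Here the tame condition is essential: a triple intersection in $M_\gamma$ of three original hyperplanes from three different blocks would force a rank violation incompatible with $\gamma$ actually satisfying the dependencies of $M$; conversely, if the blocks were too large the resulting collection would fail Theorem~A's hypothesis. I would handle this by a careful rank count on $\gamma$ restricted to unions of pairs and triples of the $H_i$, leveraging that no three of the original dependent hyperplanes share a common element.

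Finally, for irredundancy, I would argue that distinct nice partitions yield distinct matroids $M(\mathcal{Q})$, and that a generic realization of $M(\mathcal{Q})$ does not lie in $V_{M(\mathcal{Q}')}$ for any refinement $\mathcal{Q}' \neq \mathcal{Q}$, because such a realization has precisely the dependent hyperplanes prescribed by the blocks of $\mathcal{Q}$ and no additional coincidences. Combined with irreducibility, this shows no component is absorbed by another, yielding an irredundant decomposition.
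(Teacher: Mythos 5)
There is a genuine gap, and it sits exactly where the paper's machinery of \emph{liftability} enters. In your forward inclusion you extract a partition $\mathcal{Q}$ from the degenerations of $\gamma$ and assert it is nice, but condition (i) of Definition~\ref{nice} is not a statement about intersection patterns or rank counts on pairs and triples of hyperplanes: it demands that each merged block $M^{Q_i}$ be \emph{non-liftable}, and tameness alone cannot certify this. Indeed the naive partition you build can fail to be nice: for $M=G_{3,3}$, a tuple $\gamma$ of rank $2$ collapses all six lines into one block, the trivial partition, which is not nice because $G_{3,3}$ is liftable; the correct conclusion ($\gamma\in V_{G_{3,3}}$, the unique component) is reached only by perturbing $\gamma$ (equivalently, lifting it) towards a strictly finer partition. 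The paper handles this with Lemma~\ref{rango n-1} and the perturbation result \cite[Proposition~4.26]{Fatemeh4}, which replace $\gamma$ by a nearby $\tau$ whose blocks are non-liftable, and then with Lemma~\ref{prop no iguales} to pass from $\tau\in V_{\mathcal{C}(M(\mathcal{Q}))}$ to $\tau\in V_{M(\mathcal{Q})}$. Your final step, ``$\gamma$ realizes a matroid coarser than or equal to $M(\mathcal{Q})$, hence $\gamma\in V_{M(\mathcal{Q})}$,'' conflates the circuit variety $V_{\mathcal{C}(M(\mathcal{Q}))}$ with the matroid variety $V_{M(\mathcal{Q})}$; bridging that difference is precisely the hard content of the theorem, and it does not follow from a rank count. (Relatedly, the irreducibility and realizability of $V_{M(\mathcal{Q})}$ that you assert are not automatic for realization spaces; the paper obtains them from the principal-extension structure of quasi-paving matroids, Proposition~\ref{extension} and Corollary~\ref{coro ir}.)

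The irredundancy argument has the same flaw in the critical case. When $\mathcal{Q}'$ refines $\mathcal{Q}$, a generic realization of the coarser matroid $M(\mathcal{Q})$ automatically includes all dependencies of $M(\mathcal{Q}')$, so the only question is whether it is a \emph{limit} of realizations of $M(\mathcal{Q}')$; saying it ``has precisely the dependent hyperplanes prescribed by $\mathcal{Q}$ and no additional coincidences'' does not rule this out, since points of $V_{M(\mathcal{Q}')}$ need not realize $M(\mathcal{Q}')$. Your argument would equally ``show'' that the rank-two configuration is not in $V_{G_{3,3}}$, which is false, while the true dichotomy ($V_{U_{2,9}}\subset V_{G_{3,3}}$ but $V_{U_{2,12}}\not\subset V_{G_{3,4}}$) is governed by whether the finer matroid is liftable. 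The paper's proof of non-redundancy in the refinement case constructs a realization of $M(\mathcal{Q}_1)$ whose restriction to the merged block is not liftable and then uses that liftability is a closed condition holding on all of $V_{M(\mathcal{Q}_2)}$; some argument of this kind (an obstruction that is closed and separates the coarse realization from the finer variety) is indispensable, and it is missing from your proposal.
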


By applying Theorem~\ref{theorem B}, we implicitly obtain the irredundant irreducible decomposition of the variety $V_{\Delta^{s,t}}$ in the case $s=t=n$, thereby giving a partial resolution of Question~\ref{question stn} in this setting. 
We say ``partial'' because the decomposition in Theorem~\ref{theorem B} is not fully explicit: identifying all nice partitions remains a nontrivial task. 
However, for the case $s=t=n=3$ we succeed in providing a complete and explicit description. 
We refer to Definitions~\ref{setup} and~\ref{matroid gkl} for the relevant notation.

\begin{theoremC}\normalfont \label{teoC}
For $s = t = n=3$, the variety $V_{\Delta^{s,t}}$ has the following irredundant irreducible decomposition:
\[
V_{\Delta^{s,t}} = \bigcup_{\mathcal{Q}} V_{G_{k,l}(\mathcal{Q})},\tag{Theorem~\ref{Theorem 2}}
\]
where the union ranges over all partitions $\mathcal{Q} = \{Q_1, \ldots, Q_m\}$ of $R  \cup C$ satisfying the following conditions for each $i \in [m]$ with $|Q_i| \geq 2$:
\begin{itemize}
\item[\textup{(i)}] $|Q_i \cap R| \geq 3$, $|Q_i \cap C| \geq 3$, and $\max\{|Q_i \cap R|, |Q_i \cap C|\} \geq 4$,
\item[\textup{(ii)}] If $\lvert \mathcal{Q} \rvert > 1$, then neither $R \subseteq Q_i$ nor $C \subseteq Q_i$,
\end{itemize}
where $R$ and $C$ denote the row and column sets of the matrix $\mathcal{Y}$ from~\eqref{matri}.
Moreover, Theorem~\ref{theorem: count components 1} provides a generating function encoding the number of irreducible components of $V_{\Delta^{s,t}}$.
\end{theoremC}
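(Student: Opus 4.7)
The plan is to derive Theorem~C as a specialization of Theorem~B to the matroid $G_{k,l}$ in the case $n=3$. Since $G_{k,l}$ is tame paving (Lemma~\ref{gkl is 2 paving}), Theorem~B immediately gives
\[
V_{\mathcal{C}(G_{k,l})} = \bigcup_{\mathcal{Q}} V_{G_{k,l}(\mathcal{Q})},
\]
where $\mathcal{Q}$ ranges over nice partitions of the set of dependent hyperplanes of $G_{k,l}$, and by Theorem~\ref{theorem grid} the left-hand side equals $V_{\Delta^{s,t}}$. So the task reduces to two combinatorial steps: (a) identify the dependent hyperplanes of $G_{k,l}$ when $n=3$, and (b) translate the abstract condition ``nice'' from Definition~\ref{nice} into the explicit conditions (i) and (ii) on partitions of $R \cup C$.

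For step (a), I would inspect the circuit collection $\mathcal{C} = \min(\Delta \cup \binom{[kl]}{n+1})$ defining $G_{k,l}$. Since the rank is $n=3$, the dependent hyperplanes are the maximal dependent sets all of whose $3$-subsets are circuits. The rows $R_i$ (of size $l\ge 3$) and columns $C_j$ (of size $k\ge 3$) of the grid $\mathcal{Y}$ contribute precisely such hyperplanes: every $3$-subset of $R_i$ is in $\Delta^{3,3}$ (so a circuit), and similarly for $C_j$; any extension by another point would introduce a non-circuit $3$-subset (since rows and columns meet in a single element). One then verifies these are all dependent hyperplanes by showing no other subset of $[kl]$ can have all its $3$-subsets in $\mathcal{C}$. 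Thus the set of dependent hyperplanes is in natural bijection with $R \cup C$, and a partition $\mathcal{Q}$ of dependent hyperplanes is exactly a partition of $R \cup C$.

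For step (b), I would unfold Definition~\ref{nice} in this setting. A part $Q_i$ with $|Q_i|\ge 2$ yields a ``combined'' dependent configuration by joining several rows and/or columns. The niceness condition should impose that such a union is genuinely realizable as a new matroid $G_{k,l}(\mathcal{Q})$-component, which in $\mathcal{Y}$ forces the combined part to contain enough rows and columns to span a non-trivial rank-$3$ subconfiguration --- this is where conditions $|Q_i\cap R|\ge 3$, $|Q_i\cap C|\ge 3$, and $\max\{|Q_i\cap R|,|Q_i\cap C|\}\ge 4$ should come out, reflecting that merging fewer lines either produces a redundant component or fails to satisfy the constructive requirements of Definition~\ref{constr}. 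Condition (ii) should come from irredundancy: if a single part contains all of $R$ or all of $C$ while $\lvert \mathcal{Q}\rvert>1$, the resulting variety is contained in a larger component. I expect the main obstacle to be a careful bookkeeping argument in this step, verifying both that every partition satisfying (i) and (ii) is nice and that non-satisfying partitions either are not nice or produce redundant components. This will likely rely on matroid operations such as relaxation and the intersection structure of rows and columns (any row and column meet in exactly one point, and any three of them have empty triple intersection), which is precisely the tame paving hypothesis in its cleanest form.

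Finally, for the generating function in Theorem~\ref{theorem: count components 1}, I would count partitions of $R \cup C$ satisfying (i) and (ii) by stratifying according to the sizes $(|Q_i\cap R|,|Q_i\cap C|)$ of the parts. The count reduces to a convolution-type expression in the binomial coefficients $\binom{k}{a}\binom{l}{b}$ summed over admissible sizes, with an inclusion--exclusion correction coming from (ii). I would encode this in a bivariate exponential generating function so that composition yields the closed formula. No major obstacle is expected here beyond bookkeeping; the structural content lies entirely in steps (a) and (b).
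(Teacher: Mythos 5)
Your high-level frame is the same as the paper's: $G_{k,l}$ is tame paving, its dependent hyperplanes are exactly the rows and columns (so partitions of hyperplanes are partitions of $R\cup C$), and Theorem~\ref{Theorem} reduces everything to characterizing the nice partitions. Steps (a) and the reduction itself are fine.

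The genuine gap is in step (b), which is the heart of the theorem and which your sketch does not actually carry out. Condition (i) of Definition~\ref{nice} is a \emph{liftability} condition: a partition is nice only if, for every part $Q_i$ with $\lvert Q_i\rvert\geq 2$, the paving matroid $G_{k,l}^{\,Q_i}$ is \emph{not liftable} in the sense of Definition~\ref{lifta}. Your proposal never mentions liftability; instead you attribute the thresholds $\lvert Q_i\cap R\rvert\geq 3$, $\lvert Q_i\cap C\rvert\geq 3$, $\max\geq 4$ to ``genuine realizability'' of $G_{k,l}(\mathcal{Q})$ or to redundancy of components. That misidentifies the mechanism: every $G_{k,l}(\mathcal{Q})$ is realizable with irreducible matroid variety (Corollary~\ref{coro ir}), so realizability cannot produce these inequalities, and redundancy is handled inside the proof of Theorem~\ref{Theorem}, not in the definition of nice. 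The paper's actual derivation (Lemma~\ref{nice RC}) observes that $G_{k,l}^{\,Q_i}$ is obtained from the smaller grid matroid $G_{\size{Q_i\cap R},\,\size{Q_i\cap C}}$ by adding degree-one elements, that such additions do not affect liftability (Lemma~\ref{liftable 2}), and then invokes Lemma~\ref{l<n}: $G_{a,b}$ is liftable precisely when $a<3$, $b<3$, or $a=b=3$. That lemma is itself nontrivial: it rests on the external base cases that $G_{3,3}$ is liftable and $G_{3,4}$ is not (both from~\cite{Fatemeh3}) together with an induction via Lemma~\ref{liftable 2}(ii). Without this liftability analysis you have no way to obtain the exact numerical thresholds in condition (i), and ``careful bookkeeping'' or ``relaxation'' will not substitute for it.

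Your reading of condition (ii) is also slightly off, though less seriously: in Definition~\ref{nice} it is the combinatorial requirement that no hyperplane outside $Q_i$ be contained in $\bigcup_{l'\in Q_i} l'$, not an irredundancy statement about components. The verification in the paper is short: if $R\subseteq Q_i$ or $C\subseteq Q_i$ then that union is all of $[kl]$, so (ii) fails unless $\mathcal{Q}$ is the single-block partition; conversely, if neither containment holds, a row $l\notin Q_i$ and a column $l'\notin Q_i$ meet in an entry not covered by $Q_i$. Finally, the counting in Theorem~\ref{theorem: count components 1} is done in the paper via exponential generating functions for $S$-admissible set partitions (vector partitions with forbidden part-types), with a correction term subtracting the partitions violating (ii); your binomial-convolution plan is in the same spirit and is a secondary issue compared to the missing liftability input.
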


Finally, we apply Theorem~\ref{theorem B} to a notable family of rank-three matroids. Specifically, we consider the matroids $L_{n}$, introduced in Definition~\ref{confi ln}, which are constructed from a configuration of $n$ lines $\mathcal{L}$ in $\mathbb{P}^{2}$ together with their $\textstyle \binom{n}{2}$ pairwise intersection points. We determine the irreducible decomposition of the circuit variety associated with $L_{n}$.

\begin{theoremD}\normalfont\label{teo D}
The circuit variety of $L_{n}$ has the following irredundant irreducible decomposition:
\[
V_{\mathcal{C}(L_{n})} = \bigcup_{\mathcal{Q}} V_{L_{n}(\mathcal{Q})},\tag{Theorem~\ref{decomp ln}}
\]
where the union ranges over all partitions $\mathcal{Q} = \{Q_1, \ldots, Q_k\}$ of $\mathcal{L}$ such that $|Q_i| \notin \{2, 3, n-1\}$ for every $i \in [k]$. Moreover, Theorem~\ref{thm: count components 2} provides a generating function encoding the number of irreducible components of $V_{\Ccal(L_{n})}$.
\end{theoremD}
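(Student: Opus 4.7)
The plan is to apply Theorem~\ref{Theorem} directly to the matroid $L_n$, after verifying the hypotheses. Three main steps are required: first, confirm that $L_n$ is a tame paving matroid; second, identify its dependent hyperplanes and their correspondence with the lines in $\mathcal{L}$; third, translate the ``nice partition'' condition from Definition~\ref{nice} into the numerical constraint $|Q_i|\notin\{2,3,n-1\}$ stated in the theorem.

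First I would check that $L_n$ is a tame paving matroid of rank $3$. The rank is $3$ because $L_n$ is realized by vectors in $\mathbb{C}^3$ arising from the configuration of $n$ lines and their pairwise intersection points in $\mathbb{P}^2$. Since three elements are dependent exactly when collinear (and any four elements in $\mathbb{P}^2$ are automatically dependent), every circuit has size $3$ or $4$, so $L_n$ is paving. Tameness follows from the standing assumption that the $\binom{n}{2}$ intersection points are pairwise distinct, which forces that no three lines of $\mathcal{L}$ are concurrent; hence any three dependent hyperplanes of $L_n$ have empty intersection.

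Next I would describe the dependent hyperplanes of $L_n$. By construction, for each line $\ell_i\in\mathcal{L}$ the rank-$2$ flat $H_i$ consisting of $\ell_i$ together with its $n-1$ intersection points $p_{i,j}$ ($j\neq i$) is a dependent hyperplane, and every dependent hyperplane of $L_n$ arises in this way. This gives a canonical bijection between $\mathcal{L}$ and the dependent hyperplanes of $L_n$, so partitions of $\mathcal{L}$ correspond to partitions of the dependent hyperplanes. Theorem~\ref{Theorem} then yields
\[
V_{\mathcal{C}(L_n)} \;=\; \bigcup_{\mathcal{Q}} V_{L_n(\mathcal{Q})},
\]
where $\mathcal{Q}$ ranges over nice partitions, and it remains to determine exactly which partitions are nice.

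The main obstacle is this last step: showing that a partition $\mathcal{Q}$ of $\mathcal{L}$ is nice precisely when $|Q_i|\notin\{2,3,n-1\}$ for every block. I expect this to be a careful case analysis using the combinatorics of $L_n$ together with Definition~\ref{nice}: each forbidden size must be shown to produce either a redundant or non-maximal component, while every remaining partition yields a genuinely irredundant irreducible component. Specifically, blocks of size $2$ should fail because merging two lines contributes too few incidences to force a new algebraic constraint; blocks of size $3$ should fail because a triangle of three lines with its three intersection points is already captured by the ambient rank condition in $\mathbb{P}^2$; and blocks of size $n-1$ should fail because the singleton complementary line combines with the large block to make $V_{L_n(\mathcal{Q})}$ sit inside the component associated to the trivial partition with a single block of size $n$. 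Once these three exclusions are established and irredundancy for all other partitions is verified, the generating function in Theorem~\ref{thm: count components 2} reduces to a standard exponential generating function count of set partitions with allowed block sizes $\{1,4,5,\ldots,n-2,n\}$.
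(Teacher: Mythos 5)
Your overall strategy coincides with the paper's: verify that $L_n$ is a tame paving matroid whose dependent hyperplanes are exactly the lines of $\mathcal{L}$, invoke Theorem~\ref{Theorem}, and then characterize the nice partitions as those with $|Q_i|\notin\{2,3,n-1\}$. The first two steps are fine. However, the third step --- which is the entire mathematical content of the theorem --- is left as a sketch whose proposed justifications do not engage with what ``nice'' actually means in Definition~\ref{nice}, and this is a genuine gap. Niceness is not a statement about one component being ``redundant'' or ``already captured by the ambient rank condition''; it consists of two precise conditions: (i) for every block $Q_i$ with $|Q_i|\ge 2$ the submatroid $L_n^{Q_i}$ is \emph{not liftable}, and (ii) no line outside $Q_i$ is contained in $\cup_{l'\in Q_i} l'$. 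Your heuristics (``too few incidences to force a new algebraic constraint'' for size $2$, ``a triangle is captured by the rank condition'' for size $3$) do not prove that $L_n^{Q_i}$ is liftable in these cases; the paper proves it by observing that for $|Q_i|\le 3$ the matroid $L_n^{Q_i}$ is nilpotent in the sense of Definition~\ref{def nilpotent}, hence liftable by Proposition~\ref{new propo}. More seriously, you never address why blocks of size $\ge 4$ \emph{are} allowed, i.e.\ why $L_n^{Q_i}$ is not liftable when $|Q_i|\ge 4$. This is the nontrivial ingredient: one must show that $L_m$ is not liftable for all $m\ge 4$, which the paper does by induction (Lemma~\ref{liftable ln}), with base case the non-liftability of the quadrilateral set $L_4=QS$ from \cite{Fatemeh3} and inductive step via Lemma~\ref{liftable 2}(ii), and then transfer this to $L_n^{Q_i}$ (again via Lemma~\ref{liftable 2}, since $L_n^{Q_i}$ is $L_{|Q_i|}$ with degree-one elements added). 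Without these liftability arguments you cannot conclude that the surviving partitions are nice, hence neither that their components actually occur nor that the decomposition is irredundant.

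For the exclusion of blocks of size $n-1$, your intuition points in roughly the right direction but is phrased as a containment of components, whereas the relevant fact is condition (ii) of Definition~\ref{nice}: if $|Q_i|=n-1$, then every point of the unique remaining line lies on some line of $Q_i$, so that line is contained in $\cup_{l'\in Q_i} l'$ and the partition fails to be nice; conversely, if all blocks have size at most $n-2$, one picks for any $l\notin Q_i$ a second line $l'\notin Q_i\cup\{l\}$ and notes that the intersection point $l\cap l'$ is missed by $\cup_{l^{\ast}\in Q_i} l^{\ast}$. These two short arguments, together with the liftability dichotomy above, constitute Lemma~\ref{nice ln}, after which the theorem is an immediate application of Theorem~\ref{Theorem}; your proposal correctly identifies this architecture but omits the proofs of exactly the statements that make it work. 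Finally, the count in Theorem~\ref{thm: count components 2} is not simply an exponential generating function for partitions with allowed block sizes $\{1,4,\ldots,n-2,n\}$ for each fixed $n$: the paper counts partitions of $[n]$ avoiding block sizes $2$ and $3$ and then subtracts the $n$ partitions containing a block of size $n-1$, giving $c(n)=q_{\{2,3\}}(n)-n$ and the generating function $\prod_{m\notin\{0,2,3\}}\exp\bigl(x^m/m!\bigr)-xe^{x}$.
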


We also note that, prior to this work, the only instances in which the decomposition of $V_{\Delta^{s,t}}$ with $s=t=n$ or of $V_{\mathcal{C}(L_n)}$ was explicitly known are the case $s=t=n=k=3$, $l=4$, and the case $n=4$, respectively. These correspond to the $3\times 4$ grid configuration and the quadrilateral set configuration studied in~\cite{Fatemeh3}. Thus, Theorems~\ref{teoC} and~\ref{teo D} provide a substantial generalization of these previously studied examples.

%In Subsection~\ref{subsec 5.2} we study the hypergraph variety $V_{\Delta^{s,t}}$, which emerges from the study of conditional independence (CI) models. These varieties are defined in terms of parameters $s,t,k,l$, and $n$, see Setup~\ref{setup}. In \cite{clarke2020conditional}, the irreducible decomposition of the variety $V_{\Delta^{s,t}}$ was determined for the case where $s=2$ and $t=l$. We provide a simpler geometric proof of this result. Recall Definition~\ref{hypergraphs}.

%In Setup~\ref{setup} we introduce the hypergraph variety $V_{\Delta^{s,t}}$, which emerges from the study of conditional independence (CI) models. These varieties are defined in terms of parameters $s,t,k,l$, and $n$. 

\medskip
\noindent
\textbf{Outline.} In Section~\ref{sec 2}, we introduce the necessary background on matroids. Section~\ref{sec 3} is devoted to the introduction of the family of \emph{quasi-paving} matroids, along with a discussion of their main properties. Building on this framework, Section~\ref{sec 4} establishes the irredundant irreducible decomposition of circuit varieties associated with tame paving matroids. In Section~\ref{sec 5}, we apply these results to determine the irreducible components of $V_{\Delta^{s,t}}$ in the case $s=t=n=3$. Section~\ref{examples} focuses on a family of rank-three matroids $L_{n}$ arising from configurations of $n$ lines and their pairwise intersection points, for which we describe the irreducible components of their circuit varieties. In Section~\ref{section: number of irreducible components}, we derive generating functions that count the number of irreducible components of $V_{\Delta^{s,t}}$ and $V_{\Ccal(L_{n})}$. Finally, Section~\ref{appendix} contains the technical proof of Theorem~\ref{Theorem}.

\smallskip

\noindent{\bf Acknowledgement.}  
The author would like to thank Fatemeh Mohammadi for helpful conversations and for bringing this problem to his attention. The author would also like to thank Sebastian Seemann for helpful comments and suggestions regarding generating functions of integer and vector partitions.
This work was supported by the PhD fellowship 1126125N and partially funded by the FWO grants G0F5921N (Odysseus), G023721N, and the KU Leuven grant iBOF/23/064.

\section{Preliminaries}\label{sec 2}

Throughout this work, we write $[n] = \{1, \ldots, n\}$, and denote by $\textstyle \binom{C}{n}$ the collection of $n$-element subsets of a set $C$.

%We begin by establishing notation used throughout the note. Let $[n]$ denote the set $\{1,\ldots,n\}$. For an $n\times d$ matrix $X=\pare{x_{i,j}}$ of indeterminates, we denote by $\CC[X]$ the polynomial ring in the variables $x_{ij}$. For subsets $A\subset[n]$ and $B\subset[d]$ with $\size{A}=\size{B}$, we use $[A|B]_{X}$ to denote the minor of $X$ formed by rows indexed by $A$ and columns indexed by $B$. Additionally, let $X_{F}$ denote the submatrix of $X$ with columns indexed by $F\subset [d]$.

\subsection{Matroids} \label{matroids}

We begin by briefly recalling fundamental notions about matroids, and refer the reader to \cite{Oxley} for further details.  Recall first the definition of a matroid in terms of its independent sets.

\begin{definition}
A \emph{matroid} $M$ consists of a ground set $[d]$ together with a collection $\mathcal{I}(M)$ of subsets of $[d]$, called \emph{independent sets}, satisfying: 
\begin{enumerate}[label=(\roman*)]
\item $\emptyset \in \mathcal{I}$;
\item if $I \in \mathcal{I}$ and $I' \subseteq I$, then $I' \in \mathcal{I}$;
\item if $I_1, I_2 \in \mathcal{I}$ with $|I_1| < |I_2|$, then there exists $e \in I_2 \setminus I_1$ such that $I_1 \cup \{e\} \in \mathcal{I}$.
\end{enumerate}
\end{definition}

There are multiple equivalent ways to define a matroid, including descriptions in terms of circuits, the rank function, or bases. We recall these notions below and refer to \cite{Oxley} for a comprehensive discussion on these equivalent definitions.

\begin{definition}
Let $M$ be a matroid on the ground set $[d]$.
\vspace*{-.5em}

\hspace*{-1cm}\parbox{\linewidth}{
\begin{itemize}[label=$\blacktriangleright$]
\item A subset of $[d]$ that is not independent is called \emph{dependent}. The set of all dependent sets of $M$ is denoted by $\mathcal{D}(M)$.\vs
\item A \emph{circuit} is a minimally dependent subset of $[d]$. The set of all circuits is denoted by $\mathcal{C}(M)$.\vs
\item The \emph{rank} of a subset $F \subseteq [d]$, denoted $\rank(F)$, is the size of the largest independent set contained in $F$. The rank of $M$, denoted $\rank(M)$, is defined as $\rank([d])$.\vs 
\item A \emph{basis} is a maximal independent subset of $[d]$. The collection of all bases is denoted by $\mathcal{B}(M)$.\vs
%\item Given $F \subset [d]$, the \emph{rank} of $F$, written $\rank_M(F)$ or simply $\rank(F)$, is the size of the largest independent subset contained in $F$. The \emph{rank of the matroid} $M$, denoted $\rank(M)$, is defined as $\rank([d])$, which equals the size of any basis of $M$.
\item An element $x \in [d]$ belongs to the \emph{closure} of a set $F \subset [d]$, denoted $x \in \closure{F}$, if $\rank(F \cup \{x\}) = \rank(F)$. A set $F$ is called a \emph{flat} if $F = \closure{F}$.\vs
\item The \emph{restriction} of $M$ to a subset $S \subset [d]$ is the matroid on $S$ whose independent sets are those of $M$ contained in $S$. The \emph{deletion} of $S$ from $M$, denoted $M \backslash S$, is defined as the restriction $M|([d] \setminus S)$.\vs
\item Let $x \in [d]$. If $\rank(\{x\}) = 0$, we call $x$ a \emph{loop}. Conversely, $x$ is a \emph{coloop} if it does not belong to any circuit of $M$.  
A subset $\{x,y\} \subset [d]$ is called a \emph{double point} if $\{x,y\}$ is a circuit.
\end{itemize}
}
\end{definition}

\begin{definition}\label{uniform 3}\normalfont
The \emph{uniform matroid} $U_{n,d}$ of rank $n$ on the ground set $[d]$ is defined by: a subset $S \subset [d]$ is independent if and only if $|S| \leq n$. See the matroid $U_{2,7}$ in Figure~\ref{figure uniform and fano} (Left).
\end{definition}

\begin{figure}[H]
    \centering
    \includegraphics[width=0.5\textwidth, trim=0 0 0 0, clip]{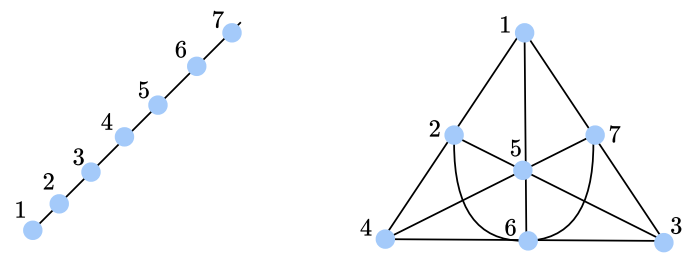}
    \caption{(Left) Uniform matroid $U_{2,7}$; (Right) Fano plane.}
    \label{figure uniform and fano}
\end{figure}

%\begin{definition}\label{dependency}
%Let $N_{1}$ and $N_{2}$ be matroids on $[d]$. We say that $N_{1}\leq N_{2}$ if $\mathcal{D}(N_{1})\subset \mathcal{D}(N_{2})$. This partial order is referred to as the {\em dependency order} on matroids.  
%\end{definition}

\begin{comment}
We now recall the matroid operation of restriction.

\begin{definition}%[Submatroid]
\normalfont \label{subm}
%Let $M$ be a matroid on the ground set $[d]$.
For any subset $S \subseteq [d]$, the {\em restriction} of $M$ to $S$ is the matroid %$\restr{M}{S}$ on $S$ 
 on $S$ whose rank function is the restriction of the rank function of $M$ to $S$. %This matroid is referred to as the restriction of $M$ to $S$. 
Unless stated otherwise, subsets of $[d]$ are assumed to carry this structure and are referred to as \textit{submatroids} of $M$. This submatroid is denoted by $M|S$, or simply $S$ when the context is clear. The {\em deletion} of $S$, denoted $M\setminus S$, is the matroid given by
$M|([d]\setminus S)$.
\end{definition}
\end{comment}

\begin{definition}\label{dependency}
Let $N_{1}$ and $N_{2}$ be matroids on $[d]$. We write $N_{2} \geq N_{1}$ if $\mathcal{D}(N_{2}) \supseteq \mathcal{D}(N_{1})$. This defines a partial order on matroids, known as the \emph{dependency order}.
\end{definition}

\subsection{Paving matroids}

A notable subfamily of matroids is the family of paving matroids. It is conjectured in \cite{mayhew2011asymptotic} that asymptotically almost all matroids are paving; see also \cite{lowrance2013properties, mayhew2011asymptotic, oxley1991ternary, brualdi1972foundations, Oxley, welsh2010matroid}. We refer to Definition~\ref{pav} for their precise definition and for the notion of dependent hyperplanes. The following lemma from \cite{hartmanis1959lattice} provides a hypergraph-theoretic characterization of paving matroids.

\begin{lemma}\label{sub h}
Let $d$ and $n$ be integers with $d \geq n + 1$. Suppose $\mathcal{L}$ is a collection of subsets of $[d]$ such that each $l \in \mathcal{L}$ satisfies $\size{l} \geq n$. Then $\mathcal{L}$ defines the collection of dependent hyperplanes of an $n$-paving matroid $M$ on the ground set $[d]$ if and only if
\begin{equation}\label{condition for paving}
\size{l_1 \cap l_2} \leq n - 2 \quad \text{for all distinct } l_1, l_2 \in \mathcal{L}.
\end{equation}
\end{lemma}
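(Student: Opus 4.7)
The plan is to prove the two implications separately. The forward direction uses the flat structure of dependent hyperplanes, while the backward direction requires constructing a matroid from $\mathcal{L}$ and verifying the matroid axioms, with the augmentation axiom being the main technical step.

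For the forward direction, I would observe that in any rank-$n$ paving matroid every subset of size at most $n-1$ is independent (as the smallest circuits have size $n$), so rank equals cardinality on such sets. Each dependent hyperplane $l \in \mathcal{L}$ is a flat of rank $n-1$, and $l_1 \cap l_2$ is an intersection of flats, hence itself a flat. If its rank were $n-1$, then $l_1 \cap l_2$ would be a rank-$(n-1)$ flat contained in the two distinct rank-$(n-1)$ flats $l_1$ and $l_2$, forcing $l_1 = l_2$, since two flats of the same rank with one containing the other must coincide. Hence the intersection has rank at most $n-2$, and because any set of size $n-1$ would contribute rank $n-1$ in a paving matroid, we conclude $|l_1 \cap l_2| \leq n-2$.

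For the backward direction, I would define a matroid $M$ on $[d]$ by declaring its independent sets to be
\[
\mathcal{I} = \bigl\{ S \subseteq [d] : |S| \leq n-1 \bigr\} \cup \bigl\{ S \subseteq [d] : |S| = n \text{ and } S \not\subseteq l \text{ for every } l \in \mathcal{L} \bigr\}.
\]
Nonemptiness and downward closure are immediate, so the substantive task is the augmentation axiom. The only nontrivial case is $|I_1| = n-1$ and $|I_2| = n$. Supposing for contradiction that no $e \in I_2 \setminus I_1$ yields $I_1 \cup \{e\} \in \mathcal{I}$, each such extension lies in some $l_e \in \mathcal{L}$. If all these $l_e$ coincide with a single $l$, then $I_2 \subseteq l$ contradicts $I_2 \in \mathcal{I}$; otherwise two distinct $l_{e_1} \neq l_{e_2}$ both contain $I_1$, forcing $|l_{e_1} \cap l_{e_2}| \geq |I_1| = n-1$ in violation of the hypothesis. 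This pigeonhole argument on the blocking hyperplanes $l_e$ is the sole place where the bound on pairwise intersections enters the proof, and it is the main obstacle of the argument.

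Once $M$ is established as a matroid, I would verify that it is paving of rank $n$ (every circuit has size $n$ or $n+1$ by construction), and that $\mathcal{L}$ coincides with the set of dependent hyperplanes of $M$. Each $l \in \mathcal{L}$ has all $n$-subsets as circuits, and adjoining any $x \notin l$ to an $(n-1)$-subset $I \subseteq l$ yields an independent $n$-set (applying the intersection bound exactly as in the augmentation step), so $l$ is a maximal dependent set in which every $n$-subset is a circuit. Conversely, any dependent hyperplane $l'$ of $M$ contains a circuit of size $n$ which by construction lies in some $l \in \mathcal{L}$; fixing an $(n-1)$-subset of $l'$ and again invoking the intersection bound on pairs of hyperplanes of $\mathcal{L}$ containing it forces $l' \subseteq l$, and maximality of $l'$ then yields $l' = l$.
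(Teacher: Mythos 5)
The paper contains no proof of this lemma to compare against: it is quoted directly from the cited reference \cite{hartmanis1959lattice} (an earlier draft, commented out in the source, likewise deferred to the literature). Your proposal is a correct, self-contained proof of the standard kind. The forward direction via the facts that dependent hyperplanes of a rank-$n$ paving matroid are rank-$(n-1)$ flats, that an intersection of flats is a flat, and that two distinct flats of equal rank cannot be nested, is sound; the backward direction, taking as independent sets all subsets of size at most $n-1$ together with the $n$-sets contained in no member of $\mathcal{L}$, is the right construction, and your pigeonhole argument on the blocking hyperplanes correctly handles both the augmentation axiom and the identification of the dependent hyperplanes of $M$ with $\mathcal{L}$. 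The one point you leave unverified is that the constructed matroid has rank exactly $n$, which requires an $n$-subset of $[d]$ contained in no member of $\mathcal{L}$. The same pigeonhole argument (fix an $(n-1)$-set $A$ and vary the extra element) shows that this can only fail when $[d]$ itself lies in $\mathcal{L}$, equivalently $\mathcal{L}=\{[d]\}$; this degenerate configuration is implicitly excluded by the statement, since a dependent hyperplane of a rank-$n$ matroid is a rank-$(n-1)$ flat and hence a proper subset of the ground set, but a careful write-up should record this step. Apart from that small omission, your argument is complete and correct, and it supplies a proof the paper itself only outsources.
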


\begin{example}
Consider the collection of subsets  
\[
\mathcal{L}=\{\{1,2,4\},\{1,3,7\},\{1,5,6\},\{2,3,5\},\{2,6,7\},\{3,4,6\},\{4,5,7\}\}.
\]
For $n=3$, it is straightforward to verify that $\mathcal{L}$ satisfies the condition in~\eqref{condition for paving}. Hence, $\mathcal{L}$ defines the collection of dependent hyperplanes of a rank-three paving matroid. This matroid is known as the {\em Fano plane}, depicted in Figure~\ref{figure uniform and fano} (Right).
\end{example}

\begin{definition}\normalfont \label{pav 2}
Let $M$ be a paving matroid of rank $n$ on the ground set $[d]$. We adopt the following terminology:
\begin{itemize}
\item We refer to $M$ as an \emph{$n$-paving matroid}.
\item The set of dependent hyperplanes of $M$ is denoted by $\mathcal{L}_{M}$, or simply $\mathcal{L}$ when the context permits.
\item When $n=3$, the elements of $\mathcal{L}_{M}$ are referred to as \emph{lines}.
\item For each element $p \in [d]$, we write $\mathcal{L}_{p}$ for the collection of dependent hyperplanes containing $p$.
\item The \emph{degree} of an element $p \in [d]$, denoted $\deg(p)$, is defined as $\size{\mathcal{L}_{p}}$.
\item Given a tuple of vectors $\gamma =(\gamma_{1}, \ldots, \gamma_{d})$ in $\CC^{n}$ indexed by $[d]$, we denote by $\gamma_{p}$ the vector corresponding to $p$, and for each $l \in \mathcal{L}_{M}$, we write $\gamma_{l} := \text{span}(\gamma_{p} : p \in l) \subset \CC^{n}$.
\end{itemize}
\end{definition}

%\textcolor{red}{The {\em degree} of $p$, written as deg$(p)$, is defined as $\size{\mathcal{L}_{p}}$. 
%\item 
%For a collection of vectors $\gamma$ in $\CC^{n}$ indexed by $\mathcal{P}_{M}$,  we denote by $\gamma_{p}$ the vector of $\gamma$ corresponding to $p$. For any subspace $l\in \mathcal{L}_{M}$, $\gamma_{l}$ denotes the subspace $\is{\gamma_{p}:p\in l}\subset \CC^{n}$. 
%\end{itemize}
%For clarity, we may omit the index $M$ and 
%simply write $\mathcal{L}$ and $\mathcal{P}$ when the context is clear. }

%\begin{definition}
%{\em Point-line configurations} are defined as paving matroids of rank three. For these matroids, the dependent hyperplanes are referred to as {\em lines}.
%\end{definition}

\begin{example}\label{example lines}
Consider the configurations depicted in Figure~\ref{fig:combined}.
\begin{itemize}
\item The configuration on the left represents a rank-three paving matroid on the ground set $\{1,\ldots,7\}$, with lines  
\[
\mathcal{L} = \{\{1,2,7\}, \{3,4,7\}, \{5,6,7\}\}.
\]

\item The configuration on the right corresponds to a rank-three paving matroid on the ground set $\{1,\ldots,6\}$, with lines  
\[
\mathcal{L}=\{\{1,2,3\},\{1,5,6\},\{3,4,5\},\{2,4,6\}\}.
\]
This configuration is known as the {\em quadrilateral set}, and denoted by $\text{QS}$. All the elements of $\text{QS}$ have degree two.
\end{itemize}
\end{example}

\begin{figure}[h]
    \centering
    \begin{subfigure}[b]{0.3\textwidth}
        \centering
        \begin{tikzpicture}[x=0.75pt,y=0.75pt,yscale=-1,xscale=1]

\tikzset{every picture/.style={line width=0.75pt}} %set default line width to 0.75pt        

%Straight Lines [id:da06101063982657062] 
\draw    (81.69,116.61) -- (191.16,174.3) ;
%Straight Lines [id:da7678393526214486] 
\draw    (77,131.88) -- (224,131.88) ;
%Straight Lines [id:da3419905968139849] 
\draw    (80.13,150.55) -- (191.16,79.27) ;
%Shape: Ellipse [id:dp43020942362366354] 
\draw [fill={rgb, 255:red, 173; green, 216; blue, 230}, fill opacity=1]
(107.34,131.2) .. controls (107.34,133.07) and (108.63,134.58) .. (110.23,134.58) .. controls (111.83,134.58) and (113.12,133.07) .. (113.12,131.2) .. controls (113.12,129.33) and (111.83,127.81) .. (110.23,127.81) .. controls (108.63,127.81) and (107.34,129.33) .. (107.34,131.2) -- cycle ;
%Shape: Ellipse [id:dp7909486902309077] 
\draw [fill={rgb, 255:red, 173; green, 216; blue, 230}, fill opacity=1]
(142.62,149.93) .. controls (142.62,151.8) and (143.91,153.32) .. (145.51,153.32) .. controls (147.11,153.32) and (148.4,151.8) .. (148.4,149.93) .. controls (148.4,148.06) and (147.11,146.55) .. (145.51,146.55) .. controls (143.91,146.55) and (142.62,148.06) .. (142.62,149.93) -- cycle ;
%Shape: Ellipse [id:dp6735689153424713] 
\draw [fill={rgb, 255:red, 173; green, 216; blue, 230}, fill opacity=1]
(173.7,166.79) .. controls (173.7,168.66) and (174.99,170.18) .. (176.59,170.18) .. controls (178.19,170.18) and (179.48,168.66) .. (179.48,166.79) .. controls (179.48,164.92) and (178.19,163.41) .. (176.59,163.41) .. controls (174.99,163.41) and (173.7,164.92) .. (173.7,166.79) -- cycle ;
%Shape: Ellipse [id:dp9808673721534766] 
\draw [fill={rgb, 255:red, 173; green, 216; blue, 230}, fill opacity=1] (201.42,131.2) .. controls (201.42,133.07) and (202.71,134.58) .. (204.31,134.58) .. controls (205.91,134.58) and (207.2,133.07) .. (207.2,131.2) .. controls (207.2,129.33) and (205.91,127.81) .. (204.31,127.81) .. controls (202.71,127.81) and (201.42,129.33) .. (201.42,131.2) -- cycle ;
%Shape: Ellipse [id:dp8560955724937005] 
\draw [fill={rgb, 255:red, 173; green, 216; blue, 230}, fill opacity=1] (167.82,131.2) .. controls (167.82,133.07) and (169.11,134.58) .. (170.71,134.58) .. controls (172.31,134.58) and (173.6,133.07) .. (173.6,131.2) .. controls (173.6,129.33) and (172.31,127.81) .. (170.71,127.81) .. controls (169.11,127.81) and (167.82,129.33) .. (167.82,131.2) -- cycle ;
%Shape: Ellipse [id:dp8522323991300782] 
\draw [fill={rgb, 255:red, 173; green, 216; blue, 230}, fill opacity=1] (177.06,87.18) .. controls (177.06,89.05) and (178.35,90.56) .. (179.95,90.56) .. controls (181.55,90.56) and (182.84,89.05) .. (182.84,87.18) .. controls (182.84,85.31) and (181.55,83.79) .. (179.95,83.79) .. controls (178.35,83.79) and (177.06,85.31) .. (177.06,87.18) -- cycle ;
%Shape: Ellipse [id:dp38553240573402947] 
\draw [fill={rgb, 255:red, 173; green, 216; blue, 230}, fill opacity=1] (145.14,105.91) .. controls (145.14,107.78) and (146.43,109.3) .. (148.03,109.3) .. controls (149.63,109.3) and (150.92,107.78) .. (150.92,105.91) .. controls (150.92,104.04) and (149.63,102.52) .. (148.03,102.52) .. controls (146.43,102.52) and (145.14,104.04) .. (145.14,105.91) -- cycle ;

% Text Node
\draw (166.25,71.57) node [anchor=north west][inner sep=0.75pt]   [align=left] {{\scriptsize $1$}};
% Text Node
\draw (206.28,115.51) node [anchor=north west][inner sep=0.75pt]   [align=left] {{\scriptsize $3$}};
% Text Node
\draw (138.91,153.35) node [anchor=north west][inner sep=0.75pt]   [align=left] {{\scriptsize $6$}};
% Text Node
\draw (170.08,173.62) node [anchor=north west][inner sep=0.75pt]   [align=left] {{\scriptsize $5$}};
% Text Node
\draw (173.44,116.45) node [anchor=north west][inner sep=0.75pt]   [align=left] {{\scriptsize $4$}};
% Text Node
\draw (106.8,111.36) node [anchor=north west][inner sep=0.75pt]   [align=left] {{\scriptsize $7$}};
% Text Node
\draw (136.66,90.59) node [anchor=north west][inner sep=0.75pt]   [align=left] {{\scriptsize $2$}};
% Text Node
%\draw (96.73,190.03) node [anchor=north west][inner sep=0.75pt]   [align=left] {{\footnotesize Pencil of three lines}};

   \end{tikzpicture}
        %\caption{Quadrilateral set}
        \label{fig:quadrilateral 2}
    \end{subfigure}
    %\hfill
    \begin{subfigure}[b]{0.3\textwidth}
        \centering

\tikzset{every picture/.style={line width=0.75pt}} %set default line width to 0.75pt        

\begin{tikzpicture}[x=0.75pt,y=0.75pt,yscale=-1,xscale=1]
%uncomment if require: \path (0,300); %set diagram left start at 0, and has height of 300

%Straight Lines [id:da9613256122302601] 
\draw [line width=0.75]    (247.65,123.92+60) -- (201.96,207.53+60) ;
%Straight Lines [id:da3259117734461473] 
\draw [line width=0.75]    (247.65,123.92+60) -- (291.68,207.53+60) ;
%Straight Lines [id:da2624677482334885] 
\draw [line width=0.75]    (219.23,174.98+60) -- (291.68,207.53+60) ;
%Straight Lines [id:da26466266747028633] 
\draw [line width=0.75]    (274.41,174.98+60) -- (201.96,207.53+60) ;
%Shape: Ellipse [id:dp10345993152561672] 
\draw [fill={rgb, 255:red, 173; green, 216; blue, 230}, fill opacity=1] (244.87,123.92+60) .. controls (244.87,125.68+60) and (246.12,127.11+60) .. (247.65,127.11+60) .. controls (249.19,127.11+60) and (250.44,125.68+60) .. (250.44,123.92+60) .. controls (250.44,122.15+60) and (249.19,120.73+60) .. (247.65,120.73+60) .. controls (246.12,120.73+60) and (244.87,122.15+60) .. (244.87,123.92+60) -- cycle ;
%Shape: Ellipse [id:dp21945184574290333] 
\draw [fill={rgb, 255:red, 173; green, 216; blue, 230}, fill opacity=1] (244.31,187.1+60) .. controls (244.31,188.87+60) and (245.56,190.29+60) .. (247.1,190.29+60) .. controls (248.64,190.29+60) and (249.88,188.87+60) .. (249.88,187.1+60) .. controls (249.88,185.34+60) and (248.64,183.91+60) .. (247.1,183.91+60) .. controls (245.56,183.91+60) and (244.31,185.34+60) .. (244.31,187.1+60) -- cycle ;
%Shape: Ellipse [id:dp033802625652762264] 
\draw [fill={rgb, 255:red, 173; green, 216; blue, 230}, fill opacity=1] (216.45,174.98+60) .. controls (216.45,176.74+60) and (217.69,178.17+60) .. (219.23,178.17+60) .. controls (220.77,178.17+60) and (222.02,176.74+60) .. (222.02,174.98+60) .. controls (222.02,173.21+60) and (220.77,171.79+60) .. (219.23,171.79+60) .. controls (217.69,171.79+60) and (216.45,173.21+60) .. (216.45,174.98+60) -- cycle ;
%Shape: Ellipse [id:dp395767307047196] 
\draw [fill={rgb, 255:red, 173; green, 216; blue, 230}, fill opacity=1] (271.62,174.98+60) .. controls (271.62,176.74+60) and (272.87,178.17+60) .. (274.41,178.17+60) .. controls (275.94,178.17+60) and (277.19,176.74+60) .. (277.19,174.98+60) .. controls (277.19,173.21+60) and (275.94,171.79+60) .. (274.41,171.79+60) .. controls (272.87,171.79+60) and (271.62,173.21+60) .. (271.62,174.98+60) -- cycle ;
%Shape: Ellipse [id:dp5069700831588411] 
\draw [fill={rgb, 255:red, 173; green, 216; blue, 230}, fill opacity=1] (288.9,207.53+60) .. controls (288.9,209.29+60) and (290.14,210.72+60) .. (291.68,210.72+60) .. controls (293.22,210.72+60) and (294.47,209.29+60) .. (294.47,207.53+60) .. controls (294.47,205.77+60) and (293.22,204.34+60) .. (291.68,204.34+60) .. controls (290.14,204.34+60) and (288.9,205.77+60) .. (288.9,207.53+60) -- cycle ;
%Shape: Ellipse [id:dp3796249357557302] 
\draw [fill={rgb, 255:red, 173; green, 216; blue, 230}, fill opacity=1] (199.17,207.53+60) .. controls (199.17,209.29+60) and (200.42,210.72+60) .. (201.96,210.72+60) .. controls (203.49,210.72+60) and (204.74,209.29+60) .. (204.74,207.53+60) .. controls (204.74,205.77+60) and (203.49,204.34+60) .. (201.96,204.34+60) .. controls (200.42,204.34+60) and (199.17,205.77+60) .. (199.17,207.53+60) -- cycle ;

% Text Node
\draw (243.29,107.07+60) node [anchor=north west][inner sep=0.75pt]   [align=left] {{\scriptsize $1$}};
% Text Node
\draw (200.77,165.67+60) node [anchor=north west][inner sep=0.75pt]   [align=left] {{\scriptsize $2$}};
% Text Node
\draw (184.94,205.77+60) node [anchor=north west][inner sep=0.75pt]   [align=left] {{\scriptsize $3$}};
% Text Node
\draw (241.88,169.35+60) node [anchor=north west][inner sep=0.75pt]   [align=left] {{\scriptsize $4$}};
% Text Node
\draw (277.98,164.77+60) node [anchor=north west][inner sep=0.75pt]   [align=left] {{\scriptsize $5$}};
% Text Node
\draw (292.84,206.21+60) node [anchor=north west][inner sep=0.75pt]   [align=left] {{\scriptsize $6$}};
% Text Node
%\draw (203.1,213.83) node [anchor=north west][inner sep=0.75pt]   [align=left] {{\footnotesize Quadrilateral set}};

\end{tikzpicture}
       % \caption{Pascal configuration}
        \label{fig:pascal}
    \end{subfigure}
\caption{(Left) Three concurrent lines; (Right) Quadrilateral set.}
    \label{fig:combined}
\end{figure}
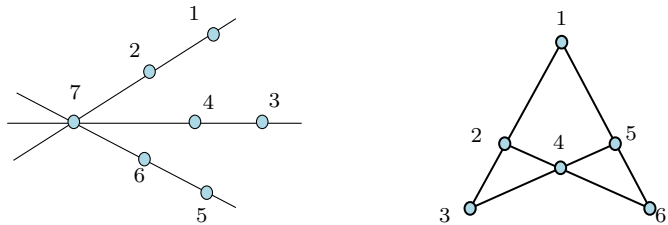

Given a paving matroid $M$ and a subset $L \subset \mathcal{L}_M$ of its dependent hyperplanes, one can naturally associate to them another paving matroid supported on the union of the elements of $L$.

\begin{definition}\normalfont\label{submatroid of hyperplanes}
Let $M$ be an $n$-paving matroid on $[d]$, and let $L \subset \mathcal{L}_M$ be a collection containing at least two dependent hyperplanes. 
By Lemma~\ref{sub h}, the set $\cup_{l \in L} l \subset [d]$ inherits the structure of an $n$-paving matroid with $L$ as its set of dependent hyperplanes. We denote this matroid by $M^L$.
\end{definition}

\begin{example}
Consider the quadrilateral set $\textup{QS}$ from Example~\ref{example lines}, and consider the subset  
\[
L=\{\{1,2,3\},\{1,5,6\}\}
\]
of its dependent hyperplanes. The matroid $\text{QS}^{L}$ is then the rank-three paving matroid on the ground set $\{1,2,3,5,6\}$, whose set of lines is precisely $L$.
\end{example}

%It is important to note that this matroid structure on $\mathcal{P}_{M^{L}}=\cup_{l\in L}l$ is distinct from the one given as a submatroid of $M$, in Definition~\ref{subm}.

%We now introduce a subfamily of paving matroids that plays a key role in this note.

%\begin{definition}
%A \emph{tame paving matroid} is a paving matroid where no point has degree greater than two, or equivalently, the intersection of any three dependent hyperplanes is empty.
%\end{definition}

We conclude this subsection by recalling the notion of tame paving matroids from Definition~\ref{2 paving}, as it will play a central role in what follows.

\subsection{Nilpotent and liftable paving matroids}\label{subsection nilpotent and liftable}

In this subsection, we review the notions of \emph{nilpotent} and \emph{liftable} paving matroids, introduced in \cite{LIWSKI2026102484} and \cite{Fatemeh4}, which will be important in what follows.

\begin{definition}\textup{\cite[Definition~3.2]{LIWSKI2026102484}}\label{def nilpotent}
Let $M$ be a paving matroid on $[d]$. We define 
    \[S_M=\{p \in [d] \mid \size{\mathcal{L}_{p}} > 1\}.\]
The \textit{nilpotent chain} of $M$ is defined as the following sequence of submatroids of $M$: 
$$M_{0}=M,\quad M_{1}=M|S_{M},\quad \text{and }\ M_{j+1}=M|S_{M_{j}} \ \text{ for every $j\geq 1$.}$$
We say that $M$ is \textit{nilpotent} if $M_{j}=\emptyset$ for some $j$.
\end{definition}

For the following definition, we recall the variety $V_{\mathcal{C}(M)}$ introduced in Definition~\ref{cir}.

\begin{definition}\cite{Fatemeh4}\label{lifta}
Let $M$ be an $n$-paving matroid on $[d]$. We introduce the following notions:
\vspace*{-.5em}

\hspace*{-1cm}\parbox{\linewidth}{
\begin{itemize}[label=$\blacktriangleright$]
\item Let $\gamma = (\gamma_1, \ldots, \gamma_d)$ be a tuple of vectors in $\CC^n$, and let $q \in \CC^n$. A tuple $\widetilde{\gamma} = (\widetilde{\gamma}_1, \ldots, \widetilde{\gamma}_d)$ is called a \emph{lifting} of $\gamma$ from $q$ if, for each $p \in [d]$, there exists $z_p \in \CC$ such that $\widetilde{\gamma}_p = \gamma_p + z_p q$. The lifting is said to be \emph{non-degenerate} if the lifted vectors do not all lie in a common hyperplane.\vs

\item The matroid $M$ is \emph{liftable} if, for every tuple $\gamma = (\gamma_1, \ldots, \gamma_d)$ spanning a hyperplane $H \subset \CC^n$, and for every $q \notin H$, there exists a non-degenerate lifting 
\[
\widetilde{\gamma} = (\gamma_1 + z_1 q, \ldots, \gamma_d + z_d q) \in V_{\mathcal{C}(M)}.
\]
\end{itemize}}
\end{definition}

The following result establishes a connection between liftable and nilpotent paving matroids.

\begin{proposition}\textup{\cite[Proposition~3.16]{LIWSKI2026102484}}\label{new propo}
Every nilpotent paving matroid is liftable.
\end{proposition}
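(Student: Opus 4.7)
The plan is to induct on the length $k$ of the nilpotent chain $M = M_{0} \supsetneq M_{1} \supsetneq \cdots \supsetneq M_{k} = \emptyset$. The base case $k=0$ is vacuous, since the empty matroid imposes no dependency constraints. For the inductive step, the crucial structural observation is that $M_{1} = M|S_{M}$ is itself a nilpotent paving matroid whose nilpotent chain is strictly shorter, and hence is liftable by the inductive hypothesis.

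To carry out the inductive step, fix a tuple $\gamma=(\gamma_{p})_{p\in[d]}$ in $V_{\mathcal{C}(M)}$ spanning a hyperplane $H\subset\CC^{n}$ and a vector $q\notin H$. I would first restrict attention to $S_{M}$: the subtuple $\gamma|_{S_{M}}$ lies in $V_{\mathcal{C}(M_{1})}$, and the inductive hypothesis produces a non-degenerate lifting $(\widetilde{\gamma}_{p})_{p\in S_{M}} = (\gamma_{p}+z_{p}q)_{p\in S_{M}} \in V_{\mathcal{C}(M_{1})}$. The task then reduces to choosing scalars $z_{p}$ for $p \in [d]\setminus S_{M}$. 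By definition of $S_{M}$, every such point belongs to at most one dependent hyperplane $l\in\mathcal{L}_{M}$; if $p$ lies in no dependent hyperplane we may set $z_{p}$ freely, while if $p$ lies in a unique $l$, the already-lifted points $\{\widetilde{\gamma}_{p'}:p'\in l\cap S_{M}\}$ determine a hyperplane $\widetilde{H}_{l}\subset\CC^{n}$ (using that $l$ has at least $n$ points and that any $n-1$ points in a paving matroid are independent, so the lifted points of $l\cap S_{M}$ span a codimension-one subspace after the partial lift). We then define $z_{p}$ by the unique linear condition $\gamma_{p}+z_{p}q\in\widetilde{H}_{l}$.

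With the full lifting assembled, two things must be verified: that $\widetilde{\gamma}\in V_{\mathcal{C}(M)}$, and that it is non-degenerate. The first holds because every dependent hyperplane $l$ is preserved by construction (its lifted points all lie in $\widetilde{H}_{l}$), and the remaining circuits of the paving matroid $M$ have size $n+1$, for which dependence is automatic in the rank-$n$ ambient space $\CC^{n}$. The second follows directly from the non-degeneracy of the inductively produced lifting on $S_{M}$, since $\widetilde{\gamma}|_{S_{M}}$ already escapes every hyperplane of $\CC^{n}$. The main obstacle I anticipate is twofold: first, verifying that $M_{1}=M|S_{M}$ is a paving matroid of the same rank $n$ as $M$, so that the inductive hypothesis can be applied in the same ambient space with the same vector $q$; and second, ensuring that each $\widetilde{H}_{l}$ is transverse to the affine line $\gamma_{p}+\CC q$ (equivalently, $\widetilde{H}_{l}\neq H$), so that the prescribed scalar $z_{p}$ exists. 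Both issues should follow from the paving structure of $M$ and the non-degeneracy of the partial lifting on $S_{M}$, which prevents any $\widetilde{H}_{l}$ from degenerating back into the original hyperplane $H$.
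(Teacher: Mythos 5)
This proposition is not proved in the paper at all; it is imported from \cite{LIWSKI2026102484}, so the only internal point of comparison is the closely related Lemma~\ref{liftable 2}, whose proof displays the technique your argument would need. Measured against the definitions in force (Definitions~\ref{def nilpotent} and~\ref{lifta}), your induction has a genuine gap, and it sits exactly where you flag a ``verification detail.'' The matroid $M_1=M|S_M$ is indeed a nilpotent paving matroid with a shorter chain, but its rank can be, and along the chain eventually must be, strictly smaller than $n$ (for the three concurrent lines of Figure~\ref{fig:combined} (Left), $S_M=\{7\}$ has rank $1$). Liftability in Definition~\ref{lifta} is tied to the rank of the matroid in question, so the inductive hypothesis for $M_1$ concerns configurations and liftings in $\CC^{\operatorname{rank}(M_1)}$ and does not apply as stated to the tuple $\gamma|_{S_M}$ in $\CC^{n}$. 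Worse, your closing non-degeneracy claim is false whenever $\operatorname{rank}(M_1)<n$: every $(\operatorname{rank}(M_1)+1)$-subset of $S_M$ is dependent in $M_1$, so any tuple in $V_{\mathcal{C}(M_1)}$ has rank at most $\operatorname{rank}(M_1)\le n-1$ and therefore lies in a hyperplane of $\CC^n$; the partial lifting on $S_M$ can never ``escape every hyperplane of $\CC^n$.'' In a correct argument the non-degeneracy must be created by the points of degree at most one (a degree-$0$ point can be pushed off $H$ freely, a degree-$1$ point is lifted into a hyperplane $\widetilde H_l\neq H$), not inherited from $S_M$; this is precisely how Lemma~\ref{liftable 2}(i) is meant to be iterated over $[d]\setminus S_M$, with a separate treatment of the bottom of the chain where the rank drops.

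There is a second, more technical gap: your construction of $\widetilde H_l$ presumes that the lifted vectors indexed by $l\cap S_M$ span a codimension-one subspace. But $l\cap S_M$ may be empty or have fewer than $n-1$ elements, and even when it is large enough, independence of those elements in the matroid says nothing about the vectors $\gamma_{p'}$, since $\gamma$ is an arbitrary tuple in the hyperplane $H$ (it could be identically zero). The paper's proof of Lemma~\ref{liftable 2}(i) avoids exactly this by first reducing to $\gamma$ in general position in $H$, using that liftability is a closed condition, and only then projecting the degree-one point from $q$ into the hyperplane spanned by the other lifted points of $l$; the same reduction is what would secure your transversality requirement $q\notin\widetilde H_l$ and ensure that $\gamma|_{S_M}$ spans what the inductive hypothesis needs. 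Without the rank bookkeeping and this general-position/closedness step, the induction as you set it up does not go through.
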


\begin{example}
Consider the rank-three matroids depicted in Figure~\ref{fig:combined}.
\begin{itemize}
\item For the matroid $M$ on the left, we have $S_{M}=\{7\}$ and $S_{S_{M}}=\emptyset$. Hence, $M$ is nilpotent, and therefore liftable as well.
\item For the quadrilateral set $\textup{QS}$ on the right, every element has degree two, so it is not nilpotent. Furthermore, this matroid is not liftable, see \textup{\cite[Example~3.6]{Fatemeh3}}.
\end{itemize}
\end{example}

\section{Quasi-paving matroids}\label{sec 3}

In this section, we introduce the class of \emph{quasi-paving matroids} and examine some of their key properties. These matroids admit a hypergraph-theoretic construction analogous to that of paving matroids described in Lemma~\ref{sub h}. The class of quasi-paving matroids extends the notion of tame paving matroids from Definition~\ref{2 paving}, and will play a central role in addressing Question~\ref{question stn}.

\begin{theorem}\label{quasi}
Let $\mathcal{H} = \{H_1, \ldots, H_k\}$ be a collection of subsets of $[d]$ such that the intersection of any three of them is empty, and let $n \leq d$ be a fixed positive integer. Then the following collection of subsets of $[d]$ forms the set of circuits of a matroid:
\begin{itemize}
\item[{\bf (1)}] Any $(n{-}1)$-element subset of $[d]$ contained in the intersection of two elements of $\mathcal{H}$.
\item[{\bf (2)}] Any $n$-element subset of some $H_i \in \mathcal{H}$ that does not contain a subset of Type~1.
\item[{\bf (3)}] Any $(n{+}1)$-element subset of $[d]$ that does not contain a subset of Type~1 or Type~2.
\end{itemize}
\end{theorem}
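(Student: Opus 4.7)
The plan is to verify the three standard circuit axioms: each circuit is nonempty, the family of circuits is an antichain under inclusion, and the circuit exchange axiom holds. Assuming $n\geq 2$ so that Type~{\bf (1)} circuits are nonempty, the first axiom is immediate, and the antichain property is built into the very definition: Type~{\bf (2)} circuits are, by fiat, forbidden to contain a Type~{\bf (1)} subset, and Type~{\bf (3)} circuits are forbidden to contain any Type~{\bf (1)} or~{\bf (2)} subset, while within each type all members have the same cardinality. So the substantive content is the exchange axiom.

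Let $C_1\neq C_2$ be circuits sharing an element $e$, and set $U=(C_1\cup C_2)\setminus\{e\}$; I need to show $U$ contains a circuit. I would first dispatch the easy range $|U|\geq n+1$: any $(n+1)$-subset $T\subset U$ is either a Type~{\bf (3)} circuit itself or contains a subset of Type~{\bf (1)} or~{\bf (2)}, so in either case $U$ contains a circuit. The remaining range $|C_1\cup C_2|\leq n+1$ rules out any Type~{\bf (3)} participants by a quick size count: a Type~{\bf (3)} paired with a strictly smaller circuit would force forbidden containment, and two distinct Type~{\bf (3)} circuits cannot have union of size $\leq n+1$. Hence only three configurations remain: both circuits Type~{\bf (1)}; one Type~{\bf (1)} and one Type~{\bf (2)}; or both Type~{\bf (2)}.

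In each of these configurations the triple-intersection hypothesis is the key tool. For two Type~{\bf (1)} circuits $C_1\subset H_{i_1}\cap H_{j_1}$ and $C_2\subset H_{i_2}\cap H_{j_2}$, the element $e$ lies in all four hyperplanes, so the hypothesis forces $\{i_1,j_1\}=\{i_2,j_2\}$; then $U\subset H_{i_1}\cap H_{j_1}$ has size at least $n-1$, and any $(n-1)$-subset of $U$ is a Type~{\bf (1)} circuit. For the mixed case one checks that $|C_1\cap C_2|=n-2$ (using that $C_1\not\subset C_2$, since $C_2$ is Type~{\bf (2)}), so $|U|=n$; the triple-intersection condition pins the Type~{\bf (2)} circuit $C_2$ to one of the two hyperplanes containing $C_1$, so $U$ is an $n$-subset of a single $H_i$, hence either a Type~{\bf (2)} circuit itself or it contains a Type~{\bf (1)} subset. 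For two Type~{\bf (2)} circuits one has $|C_1\cap C_2|=n-1$; were they contained in distinct hyperplanes $H_{i_1}\neq H_{i_2}$, then $C_1\cap C_2$ would be a Type~{\bf (1)} circuit sitting inside $C_1$, contradicting that $C_1$ is Type~{\bf (2)}. So $C_1,C_2\subset H_i$ for a common $i$, and $U$ is an $n$-subset of $H_i$, which again contains a circuit.

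The main obstacle I expect is simply the bookkeeping in the case analysis: the triple-intersection hypothesis must be invoked at precisely the right moment in each subcase, and one must carefully verify that the size reduction $|U|\leq n$ really eliminates every Type~{\bf (3)} possibility and also rules out forbidden containments among the small-type circuits. Once the cases are properly stratified, each verification is short and amounts to locating an $(n-1)$- or $n$-subset inside a single hyperplane or a pairwise intersection.
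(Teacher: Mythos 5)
Your proposal is correct and follows essentially the same route as the paper: a case analysis of the exchange axiom by circuit types, with the triple-intersection hypothesis invoked exactly where the paper invokes it (to force the two hyperplanes of two Type~(1) circuits to coincide, and to pin a Type~(2) circuit's hyperplane to one containing the Type~(1) circuit). Your only departures are cosmetic — dispatching all configurations with $|U|\geq n+1$ up front (which merges the paper's Case~1 and Case~4.1) and explicitly noting the nonemptiness and antichain axioms, which the paper leaves implicit.
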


\begin{proof}
Let $\mathcal{C}$ denote the collection of subsets of $[d]$ described in Types~1--3 above. To show that $\mathcal{C}$ is the set of circuits of a matroid, we must verify the circuit exchange axiom: for any distinct $C_1, C_2 \in \mathcal{C}$ and any $x \in C_1 \cap C_2$, there exists $C_3 \in \mathcal{C}$ such that $C_3 \subset (C_1 \cup C_2) \setminus \{x\}$.

\medskip
\noindent
{\bf Case 1.} At least one of $C_1$, $C_2$ is of Type~3. Then $|(C_1 \cup C_2) \setminus \{x\}| \geq n+1$, so a subset of $\mathcal{C}$ lies within this set.

\medskip
\noindent
{\bf Case 2.} Both $C_1$ and $C_2$ are of Type~1. Suppose $C_1 \subset H_i \cap H_j$ and $C_2 \subset H_r \cap H_l$ for some $i, j, r, l \in [k]$. Since $x \in C_1 \cap C_2 \subset H_i \cap H_j \cap H_r \cap H_l$ and the intersection of any three elements of $\mathcal{H}$ is empty, we must have $\{i, j\} = \{r, l\}$. Hence $C_1, C_2 \subset H_i \cap H_j$. Since $\size{C_{1}}=\size{C_{2}}=n-1$ it follows that $\size{(C_{1}\cup C_{2})\backslash \{x\}}\geq n-1$, which implies that $(C_{1}\cup C_{2})\backslash \{x\}$ contains a subset of Type~1.

\medskip
\noindent
{\bf Case 3.} One set is of Type~2, the other of Type~1. Suppose $C_1 \subset H_i$ is of Type~2 and $C_2 \subset H_j \cap H_l$ is of Type~1. Since $x \in C_1 \cap C_2 \subset H_i \cap H_j \cap H_l$ and $x$ lies in at most two elements of $\mathcal{H}$, we must have $i \in \{j, l\}$; assume without loss of generality $i = l$. Since $\size{C_{1}}=n$,  
 we have $\size{(C_{1}\cup C_{2})\backslash \{x\}}\geq n$, and since $(C_{1}\cup C_{2})\backslash \{x\}\subset H_{i}$, it follows that $(C_{1}\cup C_{2})\backslash \{x\}$ contains a subset of Type~1 or Type~2.

\medskip
\noindent
{\bf Case 4.} Both $C_1$ and $C_2$ are of Type~2, with $C_1 \subset H_i$ and $C_2 \subset H_j$. Since $|C_1| = |C_2| = n$, we have $|(C_1 \cup C_2) \setminus \{x\}| \geq n$.

\smallskip
\noindent
{\bf Case 4.1.} If $|(C_1 \cup C_2) \setminus \{x\}| \geq n+1$, then an element of $\mathcal{C}$ is contained within $(C_1 \cup C_2) \setminus \{x\}$.

\smallskip
\noindent
{\bf Case 4.2.} Suppose $|(C_1 \cup C_2) \setminus \{x\}| = n$, implying $|C_1 \cap C_2| = n - 1$. We distinguish two subcases:
\begin{itemize}
\item If $i = j$, then $(C_1 \cup C_2) \setminus \{x\} \subset H_i$, and the resulting set is either of Type~2 or contains a subset of Type~1.
\item If $i \ne j$, then $C_1 \cap C_2 \subset H_i \cap H_j$ has size $n-1$, so it is a Type~1 set. Therefore, $C_{1}$ includes a set of Type~1, but this contradicts the assumption that $C_{1}$ is of Type~2. 
\end{itemize}

\noindent This completes the verification of the circuit exchange property and hence the proof.
\end{proof}

We now introduce the class of \emph{quasi-paving matroids}.

\begin{definition}
A matroid is called a \emph{quasi-paving matroid} if its set of circuits is given by the construction in Theorem~\ref{quasi}. More precisely, we refer to such a matroid as an \emph{$n$-quasi-paving matroid} when the construction involves the integer~$n$. Note that an $n$-quasi-paving matroid does not necessarily have rank~$n$.
\end{definition}

If a quasi-paving matroid $M$ arises from a collection of subsets $\mathcal{H}$, we refer to $\mathcal{H}$ as a \emph{representation} of $M$.

\begin{figure}[H]
    \centering
    \includegraphics[width=0.5\textwidth, trim=0 0 0 0, clip]{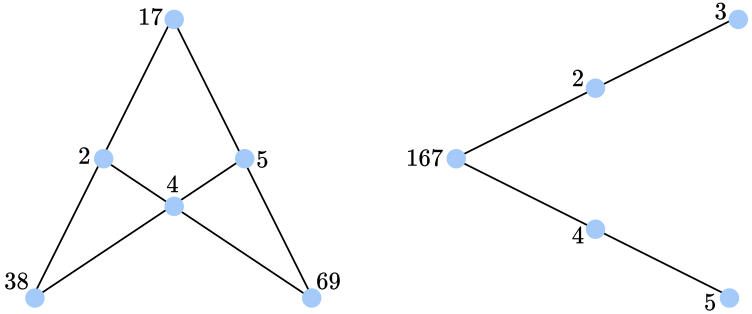}
    \caption{Two $3$-quasi-paving matroids}
    \label{new figure 5}
\end{figure}

\begin{example}\label{example 3-quasi}
Consider the hypergraphs \[\mathcal{H}_{1} = \{\{1,2,3,7,8\}, \{1,5,6,7,9\}, \{2,4,6,9\}, \{3,4,5,8\}\} \quad \text{and} \quad \mathcal{H}_{2} = \{\{1,4,5,6,7\}, \{1,2,3,6,7\}\}\] on the ground sets $[9]$ and $[7]$, respectively. Applying Theorem~\ref{quasi} with $n = 3$ to each hypergraph yields the quasi-paving matroids depicted in Figure~\ref{new figure 5}. In these figures, coinciding points represent circuits of size two, while three elements aligned on the same line correspond to dependencies of size three.
\end{example}

The class of quasi-paving matroids properly contains the class of tame paving matroids.

\begin{proposition}
Let $M$ be a tame paving matroid. Then, $M$ is quasi-paving.
\end{proposition}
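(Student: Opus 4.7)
The plan is to use the representation $\mathcal{H} = \mathcal{L}_M$ given by the dependent hyperplanes of $M$, apply Theorem~\ref{quasi}, and verify that the resulting collection of circuits coincides with $\mathcal{C}(M)$. The tame paving hypothesis on $M$ ensures that any three elements of $\mathcal{L}_M$ have empty intersection, so the hypothesis of Theorem~\ref{quasi} is satisfied and the construction yields a well-defined $n$-quasi-paving matroid $\widetilde{M}$ on the same ground set.

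Next I would analyze each of the three circuit types separately and match them against $\mathcal{C}(M)$. For Type~1, Lemma~\ref{sub h} tells us that distinct dependent hyperplanes of a paving matroid intersect in at most $n-2$ elements, so no $(n-1)$-subset of $[d]$ can lie inside the intersection of two elements of $\mathcal{L}_M$; hence there are no Type~1 subsets and the Type~2 exclusion clause becomes vacuous. For Type~2, by Definition~\ref{pav} the $n$-subsets of dependent hyperplanes of $M$ are precisely the size-$n$ circuits of $M$, and conversely every size-$n$ circuit of $M$ is contained in the closure of itself, which is a dependent hyperplane; thus Type~2 gives exactly $\{C \in \mathcal{C}(M) : |C| = n\}$.

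For Type~3, I would argue that an $(n+1)$-subset $S \subset [d]$ contains no Type~2 subset if and only if $S$ contains no size-$n$ circuit of $M$. In a paving matroid of rank $n$, every $(n+1)$-subset is dependent, so such an $S$ is a circuit of $M$ precisely when no proper subset of $S$ is dependent; since all circuits of $M$ have size $n$ or $n+1$, this is equivalent to saying that $S$ contains no size-$n$ circuit. Therefore Type~3 corresponds exactly to $\{C \in \mathcal{C}(M) : |C| = n+1\}$. Combining the three cases yields $\mathcal{C}(\widetilde{M}) = \mathcal{C}(M)$, proving that $M$ is quasi-paving.

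There is no substantial obstacle here: the argument amounts to unpacking the definitions of paving, dependent hyperplane, and tame, together with the hypergraph characterization of paving matroids in Lemma~\ref{sub h} and the elementary observation that every $(n+1)$-subset is dependent in a rank-$n$ paving matroid. The only step requiring a small amount of care is Type~3, where one must translate the minimality condition defining a circuit into the purely set-theoretic exclusion condition appearing in Theorem~\ref{quasi}; this translation is precisely what the paving hypothesis (restricting circuit sizes to $n$ and $n+1$) makes possible.
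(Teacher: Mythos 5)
Your proposal is correct and follows essentially the same route as the paper: take $\mathcal{H}=\mathcal{L}_M$, note that Lemma~\ref{sub h} rules out Type~1 subsets and that tameness supplies the empty-triple-intersection hypothesis of Theorem~\ref{quasi}, and match Types~2 and~3 with the size-$n$ and size-$(n+1)$ circuits of $M$. The paper merely states this identification more tersely ("it follows directly"), whereas you spell out the verification.
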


\begin{proof}
Let $n$ denote the rank of $M$. Consider the hypergraph $\mathcal{L}_M$ formed by the collection of dependent hyperplanes of $M$. By the characterization of paving matroids in Lemma~\ref{sub h}, for any distinct $l_1, l_2 \in \mathcal{L}_M$ we have $\lvert l_1 \cap l_2 \rvert \leq n - 2$. Moreover, since $M$ is tame, the intersection of any three distinct elements of $\mathcal{L}_M$ is empty. 

It follows directly that $M$ coincides with the quasi-paving matroid constructed by applying Theorem~\ref{quasi} to the hypergraph $\mathcal{L}_M$ and the integer $n$.
\end{proof}

The following proposition establishes fundamental properties of quasi-paving matroids.

\begin{proposition}\label{is a flat} 
Let $\mathcal{H} = \{H_1, \ldots, H_k\}$ be a representation of an $n$-quasi-paving matroid $M$ over the ground set $[d]$. Then, the following statements hold:
\begin{itemize}
\item[{\rm (i)}] For any subset $Z \subseteq [d]$, the deletion $M \setminus Z$ is also quasi-paving.
\item[{\rm (ii)}] For any distinct $i,j \in [k]$ with $\lvert H_i \cap H_j \rvert \geq n-2$, the intersection $H_i \cap H_j$ is a flat of $M$ isomorphic to the uniform matroid $U_{n-2,\, \lvert H_i \cap H_j \rvert}$.
\item[{\rm (iii)}] If $\lvert H_i \rvert \geq n-1$ and $H_i \not\subseteq H_j$ for all $j \neq i$, then $\operatorname{rank}(H_i) = n-1$.
\end{itemize}
\end{proposition}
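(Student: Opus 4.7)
I would proceed by treating each of the three parts separately, in each case unpacking the circuit description of quasi-paving matroids from Theorem~\ref{quasi} and exploiting the three-set empty-intersection property of $\mathcal{H}$.

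For (i), the natural candidate representation of $M \setminus Z$ is $\mathcal{H}' = \{H_1 \setminus Z, \ldots, H_k \setminus Z\}$. Any three members of $\mathcal{H}'$ still have empty intersection, so $\mathcal{H}'$ and $n$ define a quasi-paving matroid through Theorem~\ref{quasi}. It then suffices to verify that a subset of $[d] \setminus Z$ is a Type~1, 2, or 3 circuit with respect to $(\mathcal{H}, n)$ if and only if it is the corresponding type with respect to $(\mathcal{H}', n)$. Each equivalence follows from the identity $(H_i \cap H_j) \setminus Z = (H_i \setminus Z) \cap (H_j \setminus Z)$, together with the observation that a subset avoiding $Z$ has all of its subsets also avoiding $Z$.

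For (ii), write $F = H_i \cap H_j$. To see that $F \cong U_{n-2, |F|}$ I would check the circuits of the restriction $M|F$: any subset of $F$ of size at most $n-2$ is too small to contain a circuit of $M$ and hence is independent, while (if $|F| \geq n-1$) every $(n-1)$-subset of $F$ is by construction a Type~1 circuit. Thus $\operatorname{rank}(F) = n-2$. To prove that $F$ is a flat, I would take $x \notin F$ and an $(n-2)$-element independent subset $B \subseteq F$ and show that $B \cup \{x\}$ is independent. If it were not, then $B \cup \{x\}$ would be a Type~1 circuit, so $B \cup \{x\} \subseteq H_r \cap H_s$ for some distinct $r, s$. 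Picking any $b \in B$ (which exists since $n \geq 3$), the element $b$ lies in $H_i \cap H_j \cap H_r \cap H_s$, and the three-set property forces $\{i,j\} = \{r,s\}$, giving $x \in F$, a contradiction.

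For (iii), the upper bound $\operatorname{rank}(H_i) \leq n-1$ is immediate: if $|H_i| \geq n$ then any $n$-subset of $H_i$ is dependent (a Type~2 circuit, or one containing a Type~1), while if $|H_i| = n-1$ the bound is trivial. The main task is the lower bound. When $|H_i| = n-1$, I would show that $H_i$ itself is independent: any dependence in $H_i$ would make $H_i$ itself a Type~1 circuit lying in some $H_r \cap H_s$, which either forces $H_i \subseteq H_s$ (contradicting the hypothesis) or places an element of $H_i$ in three distinct members of $\mathcal{H}$. When $|H_i| \geq n$, I would argue by contradiction: if every $(n-1)$-subset $S$ of $H_i$ were a Type~1 circuit, then each such $S$ would lie in some $H_{r_S} \cap H_{s_S}$. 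Since any two $(n-1)$-subsets of $H_i$ are connected by a chain of one-element swaps whose consecutive subsets overlap in $n-2 \geq 1$ elements, the three-set property forces the unordered pair $\{r_S, s_S\}$ to remain constant along the chain. Hence all $(n-1)$-subsets of $H_i$ lie in a single $H_r \cap H_s$, and since every element of $H_i$ belongs to some such subset (as $|H_i| \geq n$), we obtain $H_i \subseteq H_r$, contradicting the hypothesis. The chain-propagation step is the main technical point.
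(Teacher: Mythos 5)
Your proposal is correct, and for parts (i) and (ii) it matches the paper's proof essentially verbatim: (i) is the observation that $\{H_1\setminus Z,\ldots,H_k\setminus Z\}$ represents the deletion, and (ii) combines the same two facts (sets of size at most $n-2$ are independent, $(n-1)$-subsets of $H_i\cap H_j$ are Type~1 circuits) with the same flatness argument, namely that a dependent $(n-1)$-set $B\cup\{x\}$ meeting $H_i\cap H_j$ forces the two covering members of $\mathcal{H}$ to be $H_i$ and $H_j$. Part (iii) is where you diverge. The paper splits on whether some $j\neq i$ has $\lvert H_i\cap H_j\rvert\geq n-1$: if so, it exhibits an explicit independent $(n-1)$-set consisting of $n-2$ elements of $H_i\cap H_j$ together with one element of $H_i\setminus H_j$; if not, every $(n-1)$-subset of $H_i$ is automatically independent, since it cannot fit inside any pairwise intersection. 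You instead split on $\lvert H_i\rvert$ and, in the case $\lvert H_i\rvert\geq n$, argue by contradiction via a connectivity argument: each dependent $(n-1)$-subset is covered by a pair $H_r\cap H_s$, and the pair propagates unchanged along one-element swaps because any common element lies in at most two members of $\mathcal{H}$, so all of $H_i$ lands in a single $H_r\cap H_s$. This is sound (the swap graph on $(n-1)$-subsets is connected and consecutive subsets share $n-2\geq 1$ elements), but it is the heavier route; the paper's direct construction of an independent set avoids the chain machinery entirely. Two minor points: both your argument and the paper's implicitly require $n\geq 3$ (which you at least flag), and in your final step the conclusion should be stated as $H_i\subseteq H_s$ when $r=i$, a trivial adjustment.
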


\begin{proof}
(i) The deletion $M \setminus Z$ is obtained as the $n$-quasi-paving matroid on the ground set $[d] \setminus Z$ induced by the hypergraph 
\[
\mathcal{H}' = \{ H_1 \setminus Z, \ldots, H_k \setminus Z \}.
\]

\medskip
(ii) From the hypergraph representation of $M$, any subset of $n-1$ elements contained in $H_i \cap H_j$ forms a circuit. Furthermore, by Theorem~\ref{quasi}, every subset of $[d]$ with size at most $n-2$ is independent. Consequently, the restriction of $M$ to $H_i \cap H_j$ is isomorphic to the uniform matroid $U_{n-2,\, \lvert H_i \cap H_j \rvert}$.

It remains to show that $H_i \cap H_j$ is a flat of $M$. To this end, suppose there exists $x \in [d]$ such that
\[
\operatorname{rank} \big( (H_i \cap H_j) \cup \{x\} \big) = \operatorname{rank}(H_i \cap H_j) = n-2.
\]
Then every subset of $n-1$ elements of $(H_i \cap H_j) \cup \{x\}$ forms a Type~1 circuit, implying these elements lie in the intersection of exactly two elements of $\mathcal{H}$. Since each element of $[d]$ belongs to at most two elements of $\mathcal{H}$, these must be $H_i$ and $H_j$, forcing $x \in H_i \cap H_j$. Thus, $H_i \cap H_j$ is a flat.

\medskip
(iii) We consider two cases:

\medskip
\textbf{Case 1.} Suppose there exists $j \neq i$ such that $\lvert H_i \cap H_j \rvert \geq n-1$. Let $B_1 \subseteq H_i \cap H_j$ be of size $n-2$, and pick $x \in H_i \setminus H_j$. Then $B = B_1 \cup \{x\} \subseteq H_i$ and $B \not\subseteq H_i \cap H_j$. Since $B$ is not contained in the intersection of two elements of $\mathcal{H}$, $B$ is independent, which implies $\operatorname{rank}(H_i) \geq n-1$. Since $\operatorname{rank}(H_i) \leq n-1$ by definition, this yields $\operatorname{rank}(H_i) = n-1$.

\medskip
\textbf{Case 2.} Suppose $\lvert H_i \cap H_j \rvert \leq n-2$ for every $j \neq i$. Then, any subset of $n-1$ elements of $H_i$ is independent because it cannot be contained in the intersection of two elements of $\mathcal{H}$. Hence, $\operatorname{rank}(H_i) = n-1$.
\end{proof}

The following proposition provides an alternative description of $n$-quasi-paving matroids of rank $n$. To state it, we first recall the notion of a \emph{principal extension}.  

\begin{definition}\label{def principal extension}
Let $M'$ be a matroid on the ground set $E'$, and let $F$ be a flat of $M'$. The \emph{principal extension} of $M'$ by an element $a$ into $F$ is the matroid $M = M' +_F a$ on the ground set $E = E' \cup \{a\}$, whose collection of bases is
\[
\mathcal{B}(M) = \mathcal{B}(M') \;\cup\; \left\{ (\lambda \setminus \{b\}) \cup \{a\} \;\middle|\; \lambda \in \mathcal{B}(M'), \ b \in \lambda \cap F \right\}.
\]
\end{definition}

\begin{proposition}\label{extension}
Every $n$-quasi-paving matroid of rank $n$ arises from a tame paving matroid by successive principal extensions along flats of rank $n-2$.
\end{proposition}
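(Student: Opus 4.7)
The plan is to induct on the quantity
\[
e(M) \;:=\; \sum_{1 \le l < m \le k} \max\!\bigl(|H_l \cap H_m| - (n-2),\, 0\bigr),
\]
which measures the failure of the pairwise intersections of $\mathcal{H}$ to satisfy the paving condition of Lemma~\ref{sub h}. When $e(M)=0$, every intersection satisfies $|H_l \cap H_m| \le n-2$, so combining Lemma~\ref{sub h} with the empty-triple-intersection hypothesis shows that $M$ is itself tame paving, settling the base case.

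For the inductive step, fix a pair $l \ne m$ with $|H_l \cap H_m| \ge n-1$, pick $x \in H_l \cap H_m$, and set $M' := M \setminus \{x\}$ and $F := (H_l \cap H_m) \setminus \{x\}$. By Proposition~\ref{is a flat}(i), $M'$ is again $n$-quasi-paving with representation $\{H_p \setminus \{x\}\}$, and $\operatorname{rank}(M')=n$ since $x$ lies in a Type~1 circuit and is therefore not a coloop. Only the pair $(l,m)$ is affected by the deletion, so $e(M')=e(M)-1$. By the inductive hypothesis $M'$ arises from a tame paving matroid by successive principal extensions along rank-$(n-2)$ flats, and Proposition~\ref{is a flat}(ii) applied to $M'$ certifies that $F$ is such a flat.

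The key claim is that $M = M' +_F x$, which I would prove by showing that the two matroids have the same rank function. Writing $r_M$ and $r_N$ for the rank functions of $M$ and $N := M' +_F x$, the identity is obvious when $x \notin S$. When $S = S' \cup \{x\}$ with $x \notin S'$, the principal extension formula gives
\[
r_N(S) \;=\; \min\!\bigl(r_{M'}(S')+1,\; r_{M'}(S' \cup F)\bigr),
\]
and a direct manipulation using $x \in \overline{F}^{M}$ reduces the identity $r_M(S) = r_N(S)$ to the single biconditional
\[
x \in \overline{S'}^{M} \;\Longleftrightarrow\; F \subseteq \overline{S'}^{M}.
\]
The implication $(\Leftarrow)$ is immediate from $x \in H_l \cap H_m = \overline{F}^{M}$.

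The main obstacle is the forward implication, which I would resolve by a case analysis on the type of a circuit $C$ of $M$ through $x$ with $C \setminus \{x\} \subseteq S'$. Write $T := C \setminus \{x\}$, and observe that the empty-triple-intersection hypothesis forces $x$ to lie in exactly $H_l$ and $H_m$. If $C$ is of Type~1 then $T$ is an $(n-2)$-subset of $F$, hence a basis of the flat $F$, so $\overline{T}^{M}=F$. If $C$ is of Type~2 then $T$ is an independent $(n-1)$-subset of $H_p$ for some $p \in \{l,m\}$; the independence of $T$ rules out $H_p \subseteq H_q$ for any $q \ne p$, so Proposition~\ref{is a flat}(iii) gives $\operatorname{rank}(H_p)=n-1$, and a flat-rank comparison then shows $\overline{T}^{M} = \overline{H_p}^{M} \supseteq H_l \cap H_m \supseteq F$. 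Finally, if $C$ is of Type~3 then $T$ is a basis of $M$, so $\overline{T}^{M}=[d] \supseteq F$. In every case $F \subseteq \overline{T}^{M} \subseteq \overline{S'}^{M}$, which establishes the claim and hence completes the induction.
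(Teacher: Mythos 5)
Your proof is correct, and it shares the paper's structural skeleton: delete an element $x$ of a pairwise intersection of size at least $n-1$, observe via Proposition~\ref{is a flat} that $F=(H_l\cap H_m)\setminus\{x\}$ is a rank-$(n-2)$ flat of $M'=M\setminus\{x\}$, show $M=M'+_F x$, and induct. Where you genuinely diverge is in the two supporting choices. First, you induct on the excess $e(M)$ rather than on the ground-set size $d$; both work, since deleting $x$ lowers the excess by exactly one (only the pair $(l,m)$ contains $x$, by the empty-triple-intersection hypothesis), and your base case $e(M)=0$ coincides with the paper's ``Case 1.'' Second, and more substantively, you verify the key identity $M=M'+_F x$ by comparing rank functions, reducing it to the biconditional $x\in\overline{S'}^{M}\Leftrightarrow F\subseteq\overline{S'}^{M}$ and settling the forward direction by a case analysis on the type of a circuit through $x$; the paper instead compares the two basis collections directly, with exchange arguments in both directions. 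Your route is arguably cleaner conceptually, but note that it invokes the rank formula $r_N(S'\cup\{x\})=\min\bigl(r_{M'}(S')+1,\,r_{M'}(S'\cup F)\bigr)$, whereas the paper's Definition~\ref{def principal extension} gives the principal extension only through its bases; the equivalence is standard, but since the paper never states it, you should add a line checking that the basis description yields this rank function (it does: $B'\cup\{a\}$ with $B'=\lambda\setminus\{b\}$, $b\in\lambda\cap F$, ranges exactly over the independent $(r-1)$-sets $B'$ with $F\not\subseteq\overline{B'}$). Two small touch-ups: in the Type~1 case the closure in $M$ of the $(n-2)$-set $T$ is $H_l\cap H_m$ (which contains $x$), not $F$ itself, though the needed inclusion $F\subseteq\overline{T}^{M}$ of course holds; and in the base case it is worth a sentence that the dependent hyperplanes of the resulting paving matroid are among the $H_i$, so tameness follows from the empty triple intersections — the paper is equally terse there.
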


\begin{proof}
We proceed by induction on $d$, the size of the ground set. The base case $d = n$ is immediate.

\medskip
\noindent
\textbf{Inductive step.} Suppose the result holds for all $n$-quasi-paving matroids of rank $n$ on fewer than $d$ elements. Let $M$ be an $n$-quasi-paving matroid of rank $n$ on the ground set $[d]$, with representation $\mathcal{H} = \{H_1,\ldots,H_k\}$. We distinguish two cases.

\medskip
\noindent
\textbf{Case 1.} If $|H_i \cap H_j| \leq n-2$ for all $i,j \in [k]$, then $M$ is already a tame paving matroid and the claim holds trivially.

\medskip
\noindent
\textbf{Case 2.} Suppose there exist $i,j \in [k]$ with $|H_i \cap H_j| \geq n-1$, and choose any $a \in H_i \cap H_j$. Define $M' = M \setminus \{a\}$, which is again an $n$-quasi-paving matroid of rank $n$, with representation
\[
\mathcal{H}' = \{H_1 \setminus \{a\}, \ldots, H_k \setminus \{a\}\}.
\]
Let $F = (H_i \cap H_j) \setminus \{a\}$. By Proposition~\ref{is a flat}, $F$ is a flat of $M'$ of rank $n-2$.

\medskip
\noindent
\textbf{Claim.} We have $M = M' +_F a$.

\medskip
To verify this, we examine the bases of $M$ and $M'$. Let $\lambda \subset [d] \setminus \{a\}$. Then $\lambda \in \mathcal{B}(M')$ if and only if $|\lambda| = n$ and $\lambda$ is neither of Type~2 nor contains a set of Type~1, which is precisely the condition for $\lambda \in \mathcal{B}(M)$ when $a \notin \lambda$.

Now consider $\lambda \in \mathcal{B}(M)$ with $a \in \lambda$. Since $\operatorname{rank}(H_i \cap H_j) = n - 2$ and $|\lambda| = n$, we have $|\lambda \cap (H_i \cap H_j)| \leq n - 2$. Moreover, since $a \in \lambda$ and $|F| \geq n - 2$, this implies that there exists $b \in F \setminus \lambda$. Define $\lambda' = (\lambda \cup \{b\}) \setminus \{a\}$. We claim that $\lambda' \in \mathcal{B}(M')$:

\begin{itemize}
\item Clearly, $|\lambda'| = n$.
\item Suppose that $\lambda'$ is of Type~2. Then, there exists some $l\in [k]$ such that $\lambda' \subset H_\ell$. Then, since $a$ and $b$ appear in the same members of $\mathcal{H}$, it follows that $\lambda \subset H_\ell$, contradicting the assumption that $\lambda$ is independent.
\item Suppose $\lambda'$ contains a circuit $C$ of Type~1. If $b \notin C$, it follows that also $a \notin C$ given that $a$ and $b$ appear in the same members of $\mathcal{H}$.
Then, since $\size{C}=n-1$ and $\size{\lambda}=n$, it follows that $C = \lambda' \setminus \{b\} = \lambda \setminus \{a\}$, implying that $\lambda$ contains a circuit, again a contradiction. If $b \in C$, it follows that $C \subset H_{i} \cap H_{j}$, given that $H_{i}$ and $H_{j}$ are the unique two members of $\mathcal{H}$ containing $b$. Moreover, since $C \subset \lambda'$ and $a \notin \lambda'$, we have $a \notin C$, which implies $C \subset F$. Furthermore, since $|C| = n-1$, we conclude that $\lambda$ contains $n-1$ elements of $F$, contradicting $\operatorname{rank}(F) = n-2$.
\end{itemize}

Thus $\lambda \rq \in \mathcal{B}(M\rq)$. By applying the same argument, the reciprocal can be easily demonstrated. Hence, we conclude that 
\[
\mathcal{B}(M) = \mathcal{B}(M') \cup \left\{ (\lambda \setminus \{b\}) \cup \{a\} : \lambda \in \mathcal{B}(M'),\ b \in \lambda \cap F \right\},
\]
which is precisely the set of bases of $M' +_F a$. Hence, $M = M' +_F a$.

\smallskip
By the inductive hypothesis, $M'$ is obtained from a tame paving matroid by a sequence of principal extensions along flats of rank $n-2$. Therefore, the same holds for $M$, completing the proof.
\end{proof}

\begin{example}\label{example 3-quasi 2}
Consider the $3$-quasi-paving matroids depicted in Figure~\ref{new figure 5}, labeled $M_1$ and $M_2$ from left to right. As discussed in Example~\ref{example 3-quasi}, both matroids arise from the hypergraph construction described in Theorem~\ref{quasi}. Alternatively, following the description given in Proposition~\ref{extension}, they can be obtained via principal extensions as follows:
\begin{itemize}
\item The matroid $M_1$ is obtained from the rank-three matroid $QS$ shown in Figure~\ref{fig:combined} (Right) by performing principal extensions of the elements $7$, $8$, and $9$ over the rank-one flats $\{1\}$, $\{3\}$, and $\{6\}$, respectively. 
\item Similarly, the matroid $M_2$ arises from the rank-three matroid $M_2'$ on the ground set $[5]$, defined by the lines $\{1,2,3\}$ and $\{1,4,5\}$, by performing principal extensions of the elements $6$ and $7$ over the rank-one flat $\{1\}$.
\end{itemize}
\end{example}

Applying Proposition~\ref{extension}, we obtain the following corollary, which ensures both realizability and irreducibility for the matroid varieties of $n$-quasi-paving matroids of rank $n$.

\begin{corollary}\label{coro ir}
Let $M$ be an $n$-quasi-paving matroid of rank $n$. Then $M$ is realizable, and its matroid variety $V_M$ is irreducible. 
\end{corollary}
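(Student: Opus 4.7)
The plan is to invoke Proposition~\ref{extension} to reduce the statement to two separate verifications: (i) the claim holds for tame paving matroids of rank $n$, and (ii) principal extension along a flat of rank $n-2$ preserves both realizability and irreducibility of the matroid variety. Chaining these two facts through the finite sequence of principal extensions guaranteed by Proposition~\ref{extension} then yields the corollary.

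For the base case (i), the strategy is to exhibit an explicit parameterization of $\Gamma_{M_0}$ by an irreducible variety. Since $M_0$ is tame, each element lies in at most two dependent hyperplanes. I would first choose generic $(n-1)$-dimensional subspaces $V_l \subset \CC^n$ for each dependent hyperplane $l \in \mathcal{L}_{M_0}$, parameterized by an open subset of a product of Grassmannians. For every element $p$ of degree one lying in a single hyperplane $l$, I would then pick a generic $\gamma_p \in V_l$; for every element $p$ of degree two lying in $l_1$ and $l_2$, I would pick $\gamma_p$ in the intersection $V_{l_1}\cap V_{l_2}$, which is generically $(n-2)$-dimensional by the paving condition $\size{l_1 \cap l_2}\leq n-2$. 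The tame condition guarantees that no element is forced to lie in three subspaces simultaneously, so the parameter space is a fiber bundle with irreducible fibers over an irreducible base and is therefore irreducible. Dominance onto $\Gamma_{M_0}$ would then give irreducibility of $V_{M_0}$ as the Zariski closure.

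For the inductive step (ii), suppose $M = M' +_F a$ where $V_{M'}$ is irreducible and realizable, and $F$ is a flat of rank $n-2$ in $M'$. Given a realization $\gamma' \in \Gamma_{M'}$, the flat $F$ is realized as an $(n-2)$-dimensional linear subspace $V_F(\gamma') \subset \CC^n$; a realization of $M$ extending $\gamma'$ corresponds to choosing a vector $\gamma_a \in V_F(\gamma')$ sufficiently generic to avoid any dependencies not present in $M$. The forbidden loci form a proper closed subvariety of $V_F(\gamma') \cong \CC^{n-2}$, so the fibers are non-empty open subsets of an affine space. This exhibits $\Gamma_M$ as an open subvariety of a rank $(n-2)$ vector bundle over $\Gamma_{M'}$, which is irreducible since the base is irreducible and the fibers are irreducible. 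Taking Zariski closures, $V_M$ is irreducible, and non-emptiness of $\Gamma_M$ gives realizability.

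The main obstacle I anticipate lies in the base case: handling arbitrary tame paving matroids of rank $n$ requires a careful bookkeeping of how the subspaces $V_l$ intersect and how degree-two elements are constrained by pairs of hyperplanes. In rank three the argument is transparent (choose generic lines in $\mathbb{P}^2$ and mark their pairwise intersections), but in higher rank one must verify that the Grassmannian choices are compatible in the sense that the intersections $V_{l_1}\cap V_{l_2}$ have the expected dimension $n-2$ and that no additional coincidences are forced. The inductive step, by contrast, is essentially formal once the principal extension is interpreted geometrically as choosing a point in the realized flat.
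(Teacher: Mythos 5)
Your overall skeleton is exactly the paper's: reduce via Proposition~\ref{extension} to (i) the statement for tame paving matroids and (ii) preservation of realizability and irreducibility under principal extension along a flat, then chain through the finite sequence of extensions. The difference is that the paper simply cites both ingredients --- \textup{\cite[Theorem~4.19]{LIWSKI2026102484}} for tame paving matroids and \textup{\cite[Lemma~5.5]{clarke2021matroid}}, \textup{\cite[Proposition~3.3]{corey2023singular}} for principal extensions --- whereas you try to prove them directly. Your inductive step is essentially sound: given $\gamma'\in\Gamma_{M'}$, the admissible choices of $\gamma_a$ form a non-empty open subset of $\mathrm{span}(\gamma'_p:p\in F)$, since for every flat $G$ of $M'$ with $F\not\subseteq G$ one has $\mathrm{span}(\gamma'_F)\not\subseteq\mathrm{span}(\gamma'_G)$, and there are finitely many flats; this is the same geometric content as the cited results.

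The genuine gap is in your base case, and you flag it yourself without resolving it. Choosing generic $(n-1)$-dimensional subspaces $V_l$ and then generic points in $V_l$ or in $V_{l_1}\cap V_{l_2}$ only yields an honest parameterization of $\Gamma_{M_0}$ if you prove that, for generic choices, \emph{every independent set of $M_0$ is sent to a linearly independent set of vectors} --- i.e.\ that the bad locus for each independent set is a \emph{proper} closed subset of your parameter space. This is precisely the non-trivial content: the degree-two elements are confined to codimension-two subspaces whose mutual intersection pattern is forced by the $V_l$'s, and one needs a transversality/Hall--Rado type argument (using tameness in an essential way) to exhibit even one choice making all independent sets independent. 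That this cannot be waved through by genericity alone is shown by the V\'amos matroid, a rank-$4$ paving matroid (not tame: three of its dependent hyperplanes share two elements in common) which is not realizable at all; so non-emptiness of the good locus genuinely depends on tameness and requires proof. Your fiber-bundle irreducibility argument also presupposes this non-emptiness, and dominance onto $\Gamma_{M_0}$, while true (the $V_l$ are recovered as spans from any realization), still rests on $\Gamma_{M_0}\neq\emptyset$. The paper avoids all of this by invoking \textup{\cite[Theorem~4.19]{LIWSKI2026102484}}, where exactly this realizability and irreducibility statement for tame paving matroids is established; to make your write-up complete you would either have to cite that result or carry out the Hall--Rado style independence argument in full.
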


\begin{proof}
It was shown in 
\textup{\cite[Theorem~4.19]{LIWSKI2026102484}} that tame paving matroids are realizable and have irreducible matroid varieties. Furthermore, \textup{\cite[Lemma~5.5]{clarke2021matroid}} and \textup{\cite[Proposition~3.3]{corey2023singular}} establish that principal extensions preserve both realizability and irreducibility. The claim then follows immediately from Proposition~\ref{extension}.
\end{proof}

\section{Decomposition of circuit varieties}\label{sec 4}

In this section, we describe the irreducible components of the circuit varieties of tame paving matroids, thereby answering Question~\ref{question 2 paving}. This result will later be applied to the variety $V_{\Delta^{s,t}}$ introduced in Definition~\ref{setup}, providing a solution to Question~\ref{question stn}.

Our approach begins by associating to each partition of the collection of dependent hyperplanes of a tame paving matroid a corresponding quasi-paving matroid, as defined below.

\begin{definition}\label{constr}
Let $M$ be a tame paving matroid of rank $n$ on $[d]$, and let $\mathcal{Q} = \{Q_{1}, \ldots, Q_{k}\}$ be a partition of its set of dependent hyperplanes. Define the hypergraph $\mathcal{H} = \{H_{1}, \ldots, H_{k}\}$ on $[d]$ by setting
\[
H_{i} = \bigcup_{l \in Q_{i}} l,
\]
for each $i \in [k]$. Note that every element $j \in [d]$ belongs to at most two members of $\mathcal{H}$. We denote by $M(\mathcal{Q})$ the quasi-paving matroid obtained by applying the construction in Theorem~\ref{quasi} to the hypergraph $\mathcal{H}$ and the integer $n$.
\end{definition}

\begin{figure}[H]
    \centering
    \includegraphics[width=0.5\textwidth, trim=0 0 0 0, clip]{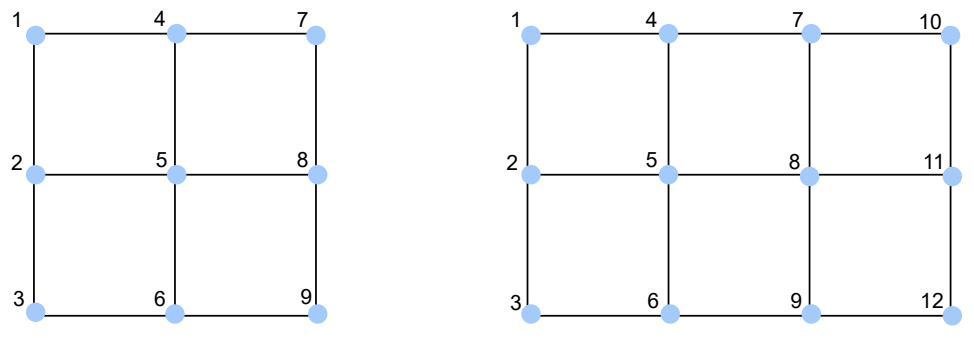}
    \caption{(Left) $3\times 3$ grid; (Right) $3\times 4$ grid}
    \label{3 times 3 and 3 times 4 grids}
\end{figure}

\begin{example}\label{example MQ}
Consider the rank-three matroids depicted in Figure~\ref{3 times 3 and 3 times 4 grids}, denoted by $M$ and $N$, with ground sets $[9]$ and $[12]$, respectively. In both cases, three collinear points represent a circuit of size three. Observe that $M$ and $N$ are tame paving matroids, and that their dependent hyperplanes correspond precisely to the lines of the grids.

\begin{itemize}
\item For $M$, consider the partition
\[
\mathcal{Q}=\big\{\{\{1,4,7\},\{2,5,8\},\{3,6,9\}\},\{\{1,2,3\}\},\{\{4,5,6\}\},\{\{7,8,9\}\}\big\}
\]
of the set of dependent hyperplanes. By Definition~\ref{constr}, the associated quasi-paving matroid $M(\mathcal{Q})$ is a rank-two matroid in which each of the triples $\{1,2,3\}$, $\{4,5,6\}$, and $\{7,8,9\}$ forms a parallel class; that is, within each triple, every pair of elements is dependent.

\item For $N$, consider the partition
\[
\mathcal{Q}=\big\{\{\{1,2,3\},\{4,5,6\}\},\{\{7,8,9\},\{10,11,12\}\},\{\{1,4,7,10\}\},\{\{2,5,8,11\}\},\{\{3,6,9,12\}\}\big\}
\]
of the set of dependent hyperplanes. By Definition~\ref{constr}, the associated quasi-paving matroid $N(\mathcal{Q})$ is a rank-three matroid with double points $\{1,4\}$, $\{2,5\}$, $\{3,6\}$, $\{7,10\}$, $\{8,11\}$, and $\{9,12\}$, and in which the sets $\{1,2,3\}$ and $\{7,8,9\}$ are dependent.
\end{itemize}
\end{example}

As Example~\ref{example MQ} may suggest, the matroid $M(\mathcal{Q})$ lies above $M$ in the dependency order introduced in Definition~\ref{dependency}, as established by the following lemma.

\begin{lemma}\label{mayor igual}
We have $M(\mathcal{Q}) \geq M$.
\end{lemma}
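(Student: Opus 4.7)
The plan is to show the inclusion $\mathcal{D}(M) \subseteq \mathcal{D}(M(\mathcal{Q}))$ by verifying that every circuit of $M$ is a dependent set of $M(\mathcal{Q})$, which suffices since supersets of dependent sets remain dependent. Because $M$ is an $n$-paving matroid, every circuit $C$ of $M$ has size $n$ or $n+1$, so the argument reduces to a short case analysis on $\lvert C \rvert$.

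First I would handle the case $\lvert C\rvert = n$. By definition of a paving matroid, such a circuit $C$ is contained in some dependent hyperplane $l \in \mathcal{L}_M$. Since $\mathcal{Q}$ partitions $\mathcal{L}_M$, there is a unique $i$ with $l \in Q_i$, so $C \subseteq l \subseteq H_i$. Now inspect the three types of circuits of $M(\mathcal{Q})$ described in Theorem~\ref{quasi}: either $C$ contains some $(n-1)$-subset of $H_i \cap H_j$ for $j \neq i$ (which is a Type~1 circuit of $M(\mathcal{Q})$), or $C$ itself is an $n$-subset of $H_i$ containing no Type~1 subset, which is a Type~2 circuit. In either case, $C$ contains a circuit of $M(\mathcal{Q})$ and is therefore dependent there.

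Next I would treat the case $\lvert C\rvert = n+1$. Here the argument is essentially automatic: every $(n+1)$-subset of $[d]$ is either a Type~3 circuit of $M(\mathcal{Q})$ by definition, or else it contains a subset of Type~1 or Type~2, which is again a circuit of $M(\mathcal{Q})$. Thus every $(n+1)$-set is dependent in $M(\mathcal{Q})$, and in particular so is $C$.

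I do not anticipate a genuine obstacle here; the statement follows directly from unpacking the definitions of $M(\mathcal{Q})$ (Definition~\ref{constr} combined with Theorem~\ref{quasi}) and of a paving matroid (Definition~\ref{pav}). The only point that requires minor care is the bookkeeping in the $\lvert C\rvert = n$ case: one must observe that a set contained in $H_i$ which fails to be a Type~2 circuit does so precisely because it contains a Type~1 circuit, so it is dependent in $M(\mathcal{Q})$ regardless. Once this is noted, the inclusion $\mathcal{D}(M) \subseteq \mathcal{D}(M(\mathcal{Q}))$ and hence $M(\mathcal{Q}) \geq M$ in the dependency order follows immediately.
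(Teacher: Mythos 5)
Your proposal is correct and follows essentially the same route as the paper: both arguments note that every set of size at least $n+1$ is automatically dependent in $M(\mathcal{Q})$ (since its rank is at most $n$, equivalently via Type~3 circuits), and that every size-$n$ circuit of $M$ lies in a dependent hyperplane $l \subseteq H_i$ and is hence dependent in $M(\mathcal{Q})$ — the paper phrases this as $\operatorname{rank}_{M(\mathcal{Q})}(H_i) \leq n-1$, while you unwind it into Type~1/Type~2 circuits, which is the same observation. No gap; the bookkeeping point you flag is exactly what makes the $\lvert C\rvert = n$ case work.
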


\begin{proof}
We must verify that $\mathcal{D}(M) \subset \mathcal{D}(M(\mathcal{Q}))$. Since $\operatorname{rank}(M(\mathcal{Q})) \leq n$, every dependent set of size at least $n+1$ in $M$ remains dependent in $M(\mathcal{Q})$. Moreover, because $M$ is a paving matroid, its only dependent sets of size at most $n$ are the $n$-element subsets of its dependent hyperplanes. It thus suffices to show that for any $l \in \mathcal{L}$, we have $\operatorname{rank}_{M(\mathcal{Q})}(l) \leq n-1$.

By construction (see Definition~\ref{constr}), there exists some $H_i \in \mathcal{H}$ such that $l \subset H_i$. Therefore,
\[
\operatorname{rank}_{M(\mathcal{Q})}(l) \leq \operatorname{rank}_{M(\mathcal{Q})}(H_i) \leq n-1,
\]
as required.
\end{proof}

We now introduce the notion of \emph{nice} partitions. For this, recall Definitions~\ref{submatroid of hyperplanes} and~\ref{lifta}.

\begin{definition}\label{nice}
Let $\mathcal{Q} = \{Q_{1}, \ldots, Q_{k}\}$ be a partition of the set of dependent hyperplanes of a paving matroid $M$. We say that $\mathcal{Q}$ is \emph{nice} if the following conditions are satisfied:
\begin{itemize}
\item[{\rm (i)}] For each $i \in [k]$ such that $\size{Q_i} \geq 2$, the associated matroid $M^{Q_i}$ is not liftable.
\item[{\rm (ii)}] There does not exist an index $i \in [k]$ and a hyperplane $l \notin Q_i$ such that $l \subseteq \cup_{l' \in Q_i} l'$.
\end{itemize}
\end{definition}

\begin{example}
Let $M$ and $N$ denote the matroids depicted on the left and right of Figure~\ref{3 times 3 and 3 times 4 grids}, respectively. 
\begin{itemize}
\item For $M$, consider the trivial partition 
\begin{equation}\label{trivial partition}
\mathcal{Q}=\{\{\{1,2,3\},\{4,5,6\},\{7,8,9\},\{1,4,7\},\{2,5,8\},\{3,6,9\}\}\}
\end{equation}
of its set of dependent hyperplanes. Equivalently, $\mathcal{Q}$ consists of a single part $Q_{1}$ containing all the dependent hyperplanes of $M$. Clearly, $M^{Q_{1}}=M$. Since $M$ is liftable (see~\cite[Example~3.10]{Fatemeh3}), it follows that $\mathcal{Q}$ is not a nice partition.

\item For $N$, consider the partition
\[
\mathcal{Q}_{1}=\{\{\{1,4,7,10\},\{2,5,8,11\},\{3,6,9,12\}\},\{\{1,2,3\}\},\{\{4,5,6\}\},\{\{7,8,9\}\},\{\{10,11,12\}\}\}
\]
of its set of dependent hyperplanes. The partition $\mathcal{Q}_{1}$ is not nice, since
\[
\{1,2,3\}\subseteq \{1,4,7,10\}\cup \{2,5,8,11\}\cup \{3,6,9,12\},
\]
which violates condition~(ii) of Definition~\ref{nice}. On the other hand, consider the trivial partition
\begin{equation}\label{trivial partition 2}
\mathcal{Q}_{2}=\{\{\{1,4,7,10\},\{2,5,8,11\},\{3,6,9,12\},\{1,2,3\},\{4,5,6\},\{7,8,9\},\{10,11,12\}\}\}.
\end{equation}
Let $Q_{1}$ denote the unique element of $\mathcal{Q}_{2}$, namely the collection of all dependent hyperplanes of $N$. Clearly, $N^{Q_{1}}=N$. Since $N$ is not liftable (see~\cite[Example~3.6]{Fatemeh3}), it follows that $\mathcal{Q}_{2}$ is a nice partition.
\end{itemize}
\end{example}

We are now ready to state the main result of this section, which provides an answer to Question~\ref{question 2 paving}. The technical proof is deferred to Section~\ref{appendix}.

\begin{theorem}\label{Theorem}
Let $M$ be a tame paving matroid. The circuit variety $V_{\mathcal{C}(M)}$ has the following irredundant irreducible decomposition:
\begin{equation}\label{deco teo1}
V_{\mathcal{C}(M)} = \bigcup_{\mathcal{Q}} V_{M(\mathcal{Q})},
\end{equation}
where the union is taken over all nice partitions $\mathcal{Q}$ of the dependent hyperplanes of $M$.
\end{theorem}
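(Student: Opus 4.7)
The plan is to establish three claims: (a) each $V_{M(\mathcal{Q})}$ is irreducible, (b) the two sides of~\eqref{deco teo1} agree as sets, and (c) the union is irredundant. Since $M(\mathcal{Q})$ is a quasi-paving matroid by Definition~\ref{constr}, claim~(a) follows from Corollary~\ref{coro ir} whenever $\rank(M(\mathcal{Q})) = n$; in the degenerate case where the rank drops, I would use Proposition~\ref{is a flat}(iii) together with the observation that niceness condition~(ii) prevents $H_i \subseteq H_j$ for $i \neq j$ (since the blocks $Q_i, Q_j$ are disjoint, any $l \in Q_i$ with $l \subseteq H_j$ would violate~(ii)) to reduce to a standard determinantal description whose variety is known to be irreducible. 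The inclusion $V_{M(\mathcal{Q})} \subseteq V_{\mathcal{C}(M)}$ is immediate from Lemma~\ref{mayor igual}: $M(\mathcal{Q}) \geq M$ implies every realization of $M(\mathcal{Q})$ includes the dependencies of $M$, and the inclusion passes to the Zariski closure.

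The core of the argument is the reverse inclusion $V_{\mathcal{C}(M)} \subseteq \bigcup_{\mathcal{Q}} V_{M(\mathcal{Q})}$. Given $\gamma \in V_{\mathcal{C}(M)}$, my plan is to associate a partition $\mathcal{Q}(\gamma)$ of $\mathcal{L}_M$ from the geometry of $\gamma$ by grouping dependent hyperplanes $l, l' \in \mathcal{L}_M$ that force extra coincidences on $\gamma$, such as $\gamma_l = \gamma_{l'}$, together with any further hyperplanes forced to join via condition~(ii). I would then show $\gamma \in V_{M(\mathcal{Q}(\gamma))}$ by exhibiting a one-parameter family of realizations of $M(\mathcal{Q}(\gamma))$ degenerating to $\gamma$. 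Niceness of the partition is enforced iteratively: whenever a block $Q_i$ with $\size{Q_i} \geq 2$ has $M^{Q_i}$ liftable, the lifting property from Definition~\ref{lifta} allows us to split $Q_i$ into finer pieces, perturbing $\gamma$ while keeping it in $V_{\mathcal{C}(M)}$ and ultimately in some $V_{M(\mathcal{Q}')}$ with a strictly finer $\mathcal{Q}'$, so the construction terminates at a nice $\mathcal{Q}$. The tame paving hypothesis---the three-hyperplane disjointness of Definition~\ref{2 paving}---is essential here, since it guarantees that the subspaces $\gamma_l$ interact in a controlled fashion and that modifying one block does not create unwanted dependencies elsewhere.

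For irredundancy, suppose $V_{M(\mathcal{Q})} \subseteq V_{M(\mathcal{Q}')}$ for two nice partitions. Passing to a generic realization of $M(\mathcal{Q})$ forces $M(\mathcal{Q}) \geq M(\mathcal{Q}')$ in the dependency order, which translates to a containment between their hypergraph representations; the niceness conditions then force $\mathcal{Q} = \mathcal{Q}'$, since any proper refinement on one side would yield either a liftable block (violating~(i) on the coarser side) or a hyperplane contained in the union of another block (violating~(ii)). The main obstacle is the reverse inclusion: controlling the interplay between the equivalence relation defining $\mathcal{Q}(\gamma)$ and the liftability-driven refinements, and verifying that the iterative process terminates in a nice partition while keeping $\gamma$ in the associated matroid variety, is delicate and explains why the full argument is deferred to Section~\ref{appendix}.
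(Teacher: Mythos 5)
Your outline reproduces the paper's three-part structure, and the inclusion $\supseteq$ via Lemma~\ref{mayor igual} and irreducibility via Corollary~\ref{coro ir} are as in the paper. However, your irredundancy argument contains a genuine gap: from $V_{M(\mathcal{Q})}\subseteq V_{M(\mathcal{Q}')}$ a generic realization does give $M(\mathcal{Q})\ge M(\mathcal{Q}')$ in the dependency order, but it is false that this plus niceness forces $\mathcal{Q}=\mathcal{Q}'$. When $\mathcal{Q}'$ properly refines $\mathcal{Q}$ the dependency order simply cannot separate them: for the grid matroid $G_{3,4}$ of Example~\ref{example: decomposition grids}, both the trivial partition (with $M(\mathcal{Q})=U_{2,12}$) and the discrete partition (with $M(\mathcal{Q}')=G_{3,4}$) are nice, and $U_{2,12}\ge G_{3,4}$ holds, yet neither matroid variety contains the other. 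Neither of your proposed violations occurs in this situation (the merged block is non-liftable, so condition~(i) holds for the coarser partition, and condition~(ii) holds on both sides). The paper's proof handles exactly this refinement case with a geometric, not combinatorial, argument: it constructs a realization of the coarser $M(\mathcal{Q})$ whose restriction to the merged block $\cup_{l\in\mathcal{R}}l$ is not liftable relative to $M^{\mathcal{R}}$ (via Lemma~\ref{liftable 2} and realizability from Corollary~\ref{coro ir}), and then uses that liftability is a closed condition while $\operatorname{rank}(\cup_{l\in\mathcal{R}}l)=n$ in $M(\mathcal{Q}')$ (Lemma~\ref{rango}(ii)) forces every point of $V_{M(\mathcal{Q}')}$ to have liftable restriction. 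This idea is absent from your sketch, and without it the non-redundancy claim fails.

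For the reverse inclusion your strategy (group hyperplanes with $\gamma_{l}=\gamma_{l'}$, refine liftable blocks, conclude membership) is the same in spirit as the paper's, but the two technical pillars are not supplied. First, after the preliminary perturbation making $\dim\gamma_{l}=n-1$ for all $l$ (the paper's Lemma~\ref{rango n-1}), one must perturb $\gamma$ so that every surviving block has non-liftable restriction \emph{while remaining in} $V_{\mathcal{C}(M)}$; this is delicate because lifting the points of one block moves degree-two points and can destroy the rank conditions on hyperplanes outside that block, and the paper outsources precisely this step to Proposition~4.26 of \cite{Fatemeh4}. Second, membership $\tau\in V_{M(\mathcal{Q})}$ is not obtained by an ad hoc one-parameter degeneration but via Lemma~\ref{prop no iguales} (distinct spanning hyperplanes for distinct blocks imply membership in the matroid variety), after which one concludes for $\gamma$ itself because a finite union of the closed sets $V_{M(\mathcal{Q})}$ is closed. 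As written, your plan identifies the right shape of the argument but would not close without these inputs, and the irredundancy step as stated is incorrect.
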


Theorem~\ref{Theorem} shows that the irreducible components of $V_{\mathcal{C}(M)}$ are precisely the matroid varieties corresponding to the quasi-paving matroids $M(\mathcal{Q})$ associated with the nice partitions $\mathcal{Q}$ of the dependent hyperplanes of $M$. Although this result answers Question~\ref{question 2 paving}, the description it provides is not fully explicit, since identifying all nice partitions is a nontrivial task and no systematic procedure is currently available. Nevertheless, as we will see in the next section, such an explicit description can indeed be obtained in the context of Question~\ref{question stn} for the case $s=t=n=3$.

\begin{example}\label{example: decomposition grids}
Consider the rank-three matroids depicted in Figure~\ref{3 times 3 and 3 times 4 grids}, denoted by $M$ and $N$. Both are tame paving matroids. Applying Theorem~\ref{Theorem} to these examples, we obtain the following:

\begin{itemize}
\item For $M$, the unique nice partition of its set of dependent hyperplanes is the discrete partition 
\[
\mathcal{Q}=\big\{\{\{1,2,3\}\},\{\{4,5,6\}\},\{\{7,8,9\}\},\{\{1,4,7\}\},\{\{2,5,8\}\},\{\{3,6,9\}\}\big\}.
\]
In this case, $M(\mathcal{Q})=M$, and hence 
\[
V_{\mathcal{C}(M)}=V_{M}.
\]

\item For $N$, there are precisely two nice partitions of its set of dependent hyperplanes:
\begin{itemize}
\item[{\rm (i)}] the trivial partition $\mathcal{Q}_{1}$ from~\eqref{trivial partition 2},  
\item[{\rm (ii)}] the discrete partition
\[
\mathcal{Q}_{2} = \Bigg\{
\begin{aligned}
&\{\{1,4,7,10\}\},\ \{\{2,5,8,11\}\},\ \{\{3,6,9,12\}\},\\
&\{\{1,2,3\}\},\ \{\{4,5,6\}\},\ \{\{7,8,9\}\},\ \{\{10,11,12\}\}
\end{aligned}
\Bigg\}.
\]
\end{itemize}
In this case, $N(\mathcal{Q}_{1})=U_{2,12}$ and $N(\mathcal{Q}_{2})=N$. Therefore, by Equation~\eqref{deco teo1}, the circuit variety of $N$ has the following irredundant irreducible decomposition
\[
V_{\mathcal{C}(N)}=V_{U_{2,12}}\cup V_{N}.
\]
\end{itemize}
\end{example}

We emphasize that Example~\ref{example: decomposition grids} extends one of the main results of \cite{Fatemeh3}, where the irreducible decomposition of the circuit variety of the $3\times 4$ grid matroid is obtained (see \textup{\cite[Corollary~5.8]{Fatemeh3}}). In the next section, we derive the explicit decomposition for grid matroids of arbitrary size.

\section{Decomposition of CI Ideals}\label{sec 5}

In this section, we study the variety $V_{\Delta^{s,t}}$ introduced in Definition~\ref{setup}, with the goal of describing its irreducible components in the case where $s = t = n=3 \leq \min\{k, l\}$. This will be achieved in Theorem~\ref{Theorem 2} by applying Theorem~\ref{Theorem} in this specific context.

Throughout, we fix $s = t = n=3$. We also work under the notation established in Definition~\ref{setup}, and for brevity, we write $\Delta$ in place of $\Delta^{s,t}$. In this setting, the collection $\Delta$ takes the form
\begin{equation}\label{def delta}
\Delta = \bigcup_{i \in [k]} \binom{R_i}{3} \cup \bigcup_{j \in [l]} \binom{C_j}{3},
\end{equation}
where $R_i$ and $C_j$ denote the row and column supports, respectively, of the matrix $\mathcal{Y}$ from~\eqref{matri}.

\begin{definition}\label{matroid gkl}
Recall from Theorem~\ref{theorem grid} and the discussion in Subsection~\ref{motivation} that there exists a matroid $M$ whose set of circuits is
\begin{equation}\label{description gkl}
\mathcal{C} = \min\left( \Delta \cup \textstyle\binom{[kl]}{4} \right),
\end{equation}
where 
$\min$ denotes the inclusion-minimal subsets, and satisfies $V_{\Delta} = V_{\mathcal{C}(M)}$. We denote this matroid by $G_{k,l}$. Moreover, we extend the definition of $G_{k,l}$ to the cases $k < 3$ or $l < 3$ by adopting the same construction.
\end{definition}

\begin{example}
In the cases $k=l=3$, and $k=3$, $l=4$, the associated matroids $G_{3,3}$ and $G_{3,4}$ coincide with the grid matroids depicted in Figure~\ref{3 times 3 and 3 times 4 grids}.
\end{example}

The decomposition of $V_{\Delta}$ then reduces to understanding the circuit variety of $G_{k,l}$. We now turn to studying the main properties of this matroid. In particular, the following lemma describes the lines of $G_{k,l}$ and shows that $G_{k,l}$ is a tame paving matroid.

\begin{lemma}\label{gkl is 2 paving}
$G_{k,l}$ is a tame paving matroid.
\end{lemma}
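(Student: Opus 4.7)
The plan is to unpack the definition of $G_{k,l}$ and directly verify the two conditions of Definition~\ref{2 paving}. By Definition~\ref{matroid gkl}, the circuits of $G_{k,l}$ are the minimal elements of $\Delta\cup\binom{[kl]}{4}$. Every element of $\Delta$ has size~$3$ (the $3$-subsets of rows and columns), while the remaining minimal elements come from $\binom{[kl]}{4}$. Hence all circuits have size $3$ or $4=n+1$, and since $G_{k,l}$ has rank $n=3$, this is precisely the paving condition.

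Next, I would identify the dependent hyperplanes of $G_{k,l}$ as exactly the rows $R_{1},\ldots,R_{k}$ and the columns $C_{1},\ldots,C_{l}$. On one hand, each $R_{i}$ has size $l\geq 3$, each $C_{j}$ has size $k\geq 3$, and every $3$-subset of such a row or column belongs to $\Delta$ and is therefore a circuit. To establish maximality and to rule out other dependent hyperplanes, I would exploit the key structural fact that each element $m\in[kl]$ corresponds to a unique entry of the matrix $\mathcal{Y}$ and thus lies in exactly one row and one column. Concretely, if $D$ is a dependent hyperplane and $a,b,c\in D$ lie in a common row $R_{i}$, then for any other $d\in D$ the triple $\{a,b,d\}$ must also be a $3$-circuit, i.e.\ contained in a single row or column. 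Since $a$ and $b$ lie in distinct columns of $\mathcal{Y}$, this triple cannot be contained in a column, so it must lie in $R_{i}$, forcing $d\in R_{i}$. Hence $D\subseteq R_{i}$, and by maximality $D=R_{i}$. The same argument handles the case where three elements of $D$ lie in a common column.

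With the dependent hyperplanes explicitly identified as $\mathcal{L}_{G_{k,l}}=\{R_{1},\ldots,R_{k},C_{1},\ldots,C_{l}\}$, both remaining conditions follow by direct inspection. Any two distinct members satisfy $|R_{i}\cap R_{j}|=0$, $|C_{i}\cap C_{j}|=0$, and $|R_{i}\cap C_{j}|=1$, matching the bound $n-2=1$ required by Lemma~\ref{sub h} and confirming the paving structure from this hypergraph perspective as well. For tameness, observe that any three distinct dependent hyperplanes must contain either two rows or two columns; since rows are pairwise disjoint and columns are pairwise disjoint, the triple intersection is empty in all cases. The main obstacle is the explicit identification of the dependent hyperplanes, but this is resolved cleanly using the uniqueness of the row and column coordinates of each element of $[kl]$; once this structural observation is in place, the paving and tameness conditions reduce to straightforward set-theoretic checks.
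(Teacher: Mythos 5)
Your proof is correct and follows essentially the same route as the paper: verify that all circuits have size $3$ or $4$, identify the dependent hyperplanes as the rows and columns of $\mathcal{Y}$, and deduce tameness from the fact that each element lies in exactly one row and one column. The only difference is that you spell out the maximality argument showing every dependent hyperplane equals some $R_i$ or $C_j$, which the paper leaves implicit.
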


\begin{proof}
From the description of the circuits of $G_{k,l}$ given in Equation~\eqref{description gkl}, we see that every circuit has size either $3$ or $4$. Thus, $G_{k,l}$ is a paving matroid of rank three.

To verify that is tame, observe that a subset $S \subset [kl]$ of size $3$ is dependent in $G_{k,l}$ if and only if it is contained entirely within some row $R_i$ (for $i \in [k]$) or some column $C_j$ (for $j \in [l]$) of the matrix $\mathcal{Y}$. Therefore, the collection of lines of $G_{k,l}$ is
\begin{equation}\label{collection of dependent hyperplanes}
\mathcal{L}_{G_{k,l}} = \{ R_i : i \in [k],\ \size{R_i} \geq 3 \} \cup \{ C_j : j \in [l],\ \size{C_j} \geq 3 \}.
\end{equation}
Since each element $p \in [kl]$ belongs to at most one row and one column of $\mathcal{Y}$, it lies in at most two lines. Hence, $G_{k,l}$ is tame.
\end{proof}

We note that the proof of Lemma~\ref{gkl is 2 paving} can be easily adapted to the case where $s=t=n>3$.

\begin{example}
Figure~\ref{new figure 6} illustrates the matroids $G_{3,5}$ and $G_{4,5}$. The rows and columns of the underlying grid represent the lines (i.e., dependent hyperplanes) of these rank-three paving matroids.
\end{example}

Since $G_{k,l}$ is a tame paving matroid, we may apply Theorem~\ref{Theorem}. To do so, it is necessary to determine which partitions of $R \cup C$ are nice in the sense of Definition~\ref{nice}. Lemma~\ref{l<n} plays a key role in this identification. Before stating it, however, we establish an auxiliary result. Recall the notions of nilpotent and liftable paving matroids introduced in Subsection~\ref{subsection nilpotent and liftable}.

\begin{lemma}\label{liftable 2}
Let $M$ be an $n$-paving matroid on $[d]$, and let $p \in [d]$ be a non-coloop. Denote by $M' = M \setminus \{p\}$ the deletion of $p$ from $M$. Then the following statements hold:
\begin{itemize}
\item[{\rm (i)}] If $M'$ is liftable and $\deg(p) \leq 1$, then $M$ is liftable.
\item[{\rm (ii)}] If $M'$ is not liftable and $\deg(p) \geq 1$, then $M$ is not liftable.
\end{itemize}
\end{lemma}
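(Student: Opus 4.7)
\emph{Strategy.} Both parts compare liftings of $M$ with those of $M'=M\setminus\{p\}$ via the dichotomy that every circuit of $M$ is either a circuit of $M'$ (if it avoids $p$) or contains $p$. Circuits through $p$ of size $n+1$ are automatically dependent in $\CC^n$, while the size-$n$ circuits through $p$ are $n$-element subsets of dependent hyperplanes of $M$ through $p$, so their structure is governed entirely by $\deg(p)$.

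\emph{Plan for (i).} Take $\gamma=(\gamma_1,\dots,\gamma_d)$ spanning a hyperplane $H\subset\CC^n$ and $q\notin H$, and write $\gamma':=(\gamma_i)_{i\neq p}$. When $\gamma'$ still spans $H$, invoke liftability of $M'$ on $(\gamma',q)$ to produce a non-degenerate lift $\widetilde{\gamma}'\in V_{\mathcal{C}(M')}$; the residual case where $\gamma'$ only spans a subspace of dimension $n-2$ (so that $\gamma_p$ supplies the missing direction) is reducible to the previous one by a direct generic construction. One then chooses $z_p\in\CC$ so that $\widetilde{\gamma}_p:=\gamma_p+z_pq$ satisfies every circuit of $M$ through $p$: if $\deg(p)=0$ there are no new size-$n$ constraints, so any $z_p$ works; if $\deg(p)=1$, letting $L$ denote the unique dependent hyperplane through $p$, the requirement becomes $\widetilde{\gamma}_p\in V:=\text{span}(\widetilde{\gamma}_i:i\in L\setminus\{p\})$. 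This is arranged by observing that the projection $\pi\colon\CC^n\to\CC^n/\CC q$ sends $V$ onto $\text{span}(\gamma_i:i\in L\setminus\{p\})$, which contains $\gamma_p=\pi(\widetilde{\gamma}_p)$. Non-degeneracy of $\widetilde{\gamma}$ is inherited from that of $\widetilde{\gamma}'$.

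\emph{Plan for (ii).} Argue by contrapositive: assume $M$ is liftable, and deduce $M'$ is liftable. Given $\gamma'$ on $[d]\setminus\{p\}$ spanning a hyperplane $H\subset\CC^n$ and $q\notin H$, extend $\gamma'$ to $\gamma$ on $[d]$ by choosing $\gamma_p\in H$; then $\gamma$ spans $H$ and lies in $V_{\mathcal{C}(M)}$, since any size-$n$ circuit of $M$ is a set of $n$ vectors inside the $(n-1)$-dimensional $H$. Liftability of $M$ produces a non-degenerate lift $\widetilde{\gamma}$, and its restriction $\widetilde{\gamma}':=\widetilde{\gamma}|_{[d]\setminus\{p\}}$ automatically lies in $V_{\mathcal{C}(M')}$. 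To verify non-degeneracy of $\widetilde{\gamma}'$, suppose for contradiction that $\widetilde{\gamma}'\subset H''$ for some hyperplane $H''\subset\CC^n$; then $\widetilde{\gamma}_p\notin H''$ by non-degeneracy of $\widetilde{\gamma}$. If $q\in H''$, then $\pi$ sends $H''$ onto a subspace of dimension $n-2$, forcing $\gamma'\subset\pi(H'')$ and contradicting that $\gamma'$ spans $H$. If $q\notin H''$, invoke a dependent hyperplane $L\ni p$ (which exists since $\deg(p)\ge 1$): the constraint $\dim\text{span}(\widetilde{\gamma}_i:i\in L)\le n-1$, combined with $\widetilde{\gamma}_p\notin H''\supset W:=\text{span}(\widetilde{\gamma}_i:i\in L\setminus\{p\})$, forces $\dim W\le n-2$; having chosen $\gamma_p$ so that $\text{span}(\gamma_i:i\in L\setminus\{p\})=H$ (possibly after perturbing $\gamma'$ within the locus of non-liftable witnesses), the identity $\pi(W)=H$ forces $\dim W\ge n-1$, yielding the desired contradiction.

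\emph{Main obstacle.} The technical crux in both parts is the linear algebra involving the projection $\pi$. In (i), one must guarantee the existence of a valid $z_p$ even when $V$ has dimension strictly less than $n-1$. In (ii), the delicate point is arranging that for some dependent hyperplane $L\ni p$ the vectors $\{\gamma_i:i\in L\setminus\{p\}\}$ span $H$: the worst case is $|L|=n$, when $L\setminus\{p\}$ contains only $n-1$ elements and its $\gamma$-span may fail to fill $H$. Handling this requires either a careful perturbation of the witness $\gamma'$ preserving non-liftability of $M'$, exploiting additional hyperplanes through $p$ when $\deg(p)\ge 2$, or a tailored case analysis to carry out the final step of the contradiction.
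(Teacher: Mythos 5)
Your skeleton matches the paper's own proof — extend/restrict liftings across the deleted point, use the unique dependent hyperplane through $p$ in (i), and in (ii) use a dependent hyperplane through $p$ to force the lifted vector at $p$ into the common hyperplane — but both parts have a genuine gap at exactly the technical crux, and both gaps are filled in the paper by one ingredient you never invoke: liftability of a fixed tuple from a fixed $q$ is a \emph{closed} condition on the tuple (via the liftability matrix of \cite{Fatemeh4}). In (i), your claim that $\operatorname{span}(\gamma_i : i\in L\setminus\{p\})$ contains $\gamma_p$ is unjustified and false in general: take $n=3$, $L=\{p,a,b\}$ of size $n$, $\gamma_a=\gamma_b=e_1$, $\gamma_p=e_2$, $q=e_3$; if the lifting $\widetilde{\gamma}'$ supplied by liftability of $M'$ happens to have $z_a\neq z_b$, then $V=\operatorname{span}(e_1,e_3)$ and no $z_p$ puts $e_2+z_pq$ into $V$. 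So the extension step can genuinely fail for the particular $\widetilde{\gamma}'$ you are handed. The paper avoids this by first using closedness of liftability to reduce to $\gamma$ in general position in $H$, so that $\{\gamma_r : r\in L\setminus\{p\}\}$ already spans $H$; then the lifted span is a hyperplane not containing $q$, and the projection of $\gamma_p$ from $q$ onto it exists. The same reduction also disposes of your ``residual case'' where $\gamma'$ spans only an $(n-2)$-dimensional space, which you leave to an unspecified ``direct generic construction.''

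In (ii), the contrapositive bookkeeping is fine and logically equivalent to the paper's direct contradiction, but the step you yourself flag as the ``main obstacle'' — arranging $\operatorname{span}(\gamma_i : i\in L\setminus\{p\})=H$ for some dependent hyperplane $L\ni p$, ``possibly after perturbing $\gamma'$ within the locus of non-liftable witnesses'' — is precisely what needs proof, and you do not supply it. (Note also that this span does not involve $\gamma_p$, so it cannot be arranged by ``choosing $\gamma_p$.'') The paper's justification is again closedness of liftability: non-liftable tuples form an open set, so a small generic perturbation of the witness inside $H$ remains non-liftable while making the vectors indexed by $L\setminus\{p\}$ span $H$; the rest of your argument then closes up essentially as in the paper. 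Without that closedness statement, both (i) and (ii) remain incomplete.
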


\begin{proof}
(i) To prove that $M$ is liftable, we must show that for any tuple of vectors $\gamma =(\gamma_1,\ldots,\gamma_{d})$ in $\CC^n$ of rank $n-1$ spanning a hyperplane $H\subset \CC^{n}$, and for every $q \notin H$, there exists a non-degenerate lifting of $\gamma$ to a tuple in $V_{\mathcal{C}(M)}$.

According to the characterization of liftability given in~\textup{\cite[Section~3]{Fatemeh4}}, liftability is a closed condition, meaning it can be expressed as the vanishing of certain polynomials determined by the liftability matrix of $M$. We refer the reader to~\cite{Fatemeh4} for further details.

This allows us to reduce to the case where the vectors in $\gamma$ are in general position in $H$, that is, any subset of $n-1$ of them is linearly independent. Indeed, if the lifting exists for such generic configurations, it must also exist for arbitrary $\gamma$ by closedness.

Since $M' = M \setminus \{p\}$ is liftable, there exists a non-degenerate lifting $\widetilde{\gamma} = (\widetilde{\gamma}_r : r \in [d] \setminus \{p\}) \in V_{\mathcal{C}(M')}$ of the restriction of $\gamma$ to $[d] \setminus \{p\}$. We now distinguish two cases:

\medskip
\noindent
\textbf{Case 1.} $\deg(p) = 0$. In this case, $p$ is not contained in any dependent hyperplane. Therefore, we may define $\widetilde{\gamma}_p$ as any vector in $\CC^n$. The extended tuple $(\widetilde{\gamma}_1,\ldots, \widetilde{\gamma}_d)$ then lies in $V_{\mathcal{C}(M)}$.

\medskip
\noindent
\textbf{Case 2.} $\deg(p) = 1$, and let $\mathcal{L}_p = \{l\}$. Then $p$ appears in a unique dependent hyperplane $l$. Since the vectors in $\gamma$ are in general position in $H$, we have $\rank\{\gamma_{r}:r\in l\}=n-1$, which implies $\rank\{\widetilde{\gamma}_{r}:r\in l\}=n-1$.
We define $\widetilde{\gamma}_p$ as the projection of $\gamma_p$ from the vector $q$ onto the hyperplane spanned by the vectors $\{\widetilde{\gamma}_r : r \in l\}$. The extended tuple $(\widetilde{\gamma}_1,\ldots, \widetilde{\gamma}_d)$ then lies in $V_{\mathcal{C}(M)}$.

\medskip
(ii) Since $M'$ is not liftable, there exists a tuple of $d$ vectors $(\gamma_r : r \in [d] \setminus \{p\})$ in $\CC^n$ spanning a hyperplane $H\subset \CC^{n}$, and a vector $q \notin H$, such that this collection admits no non-degenerate lifting from $q$ to $V_{\mathcal{C}(M')}$. Let $l \in \mathcal{L}_p$ be a dependent hyperplane of $M$ containing $p$.

As liftability is a closed condition, we may perturb the vectors $\{\gamma_r\}$ slightly within $H$ while preserving non-liftability. In doing so, we can assume that $\rank(\gamma_l) = n - 1$.

Now extend this collection to all of $[d]$ by setting
\[
\widetilde{\gamma}_r = 
\begin{cases}
\gamma_r & \text{if } r \neq p, \\
v & \text{for some } v \in H \setminus \{0\} \  \text{if } r = p.
\end{cases}
\]
Assume, for contradiction, that $M$ is liftable. Then, there exists a lifting $(\tau_r : r \in [d]) \in V_{\mathcal{C}(M)}$ of $\widetilde{\gamma}$ from $q$, such that $\rank(\tau) = n$.

Since the original tuple $(\gamma_r)_{r \neq p}$ is not liftable from $q$, it follows that the lifted vectors $(\tau_r : r \in [d] \setminus \{p\})$ must all lie in a common hyperplane $H' \subset \CC^n$, which they span. From the assumption $\rank(\gamma_l) = n-1$, it follows that $\rank(\tau_l) = n - 1$, so $\text{span}(\tau_r : r \in l) = H'$. In particular, $\tau_p \in H'$, and thus all vectors $\{\tau_r : r \in [d]\}$ lie in $H'$, contradicting the assumption that $\rank(\tau) = n$.

This contradiction shows that $M$ cannot be liftable, completing the proof.
\end{proof}

\begin{lemma}\label{l<n}
The matroid $G_{k,l}$ is liftable if and only if either $k < 3$, $l < 3$, or $k = l = 3$.
\end{lemma}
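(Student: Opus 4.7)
The plan is to distinguish three regimes based on the sizes of $k$ and $l$.

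The first regime is $k < 3$ or $l < 3$. By the row/column symmetry of the construction, I may take $k < 3$. Every column $C_j$ then has size at most $2$ and is not a dependent hyperplane, so by Equation~\eqref{collection of dependent hyperplanes} the dependent hyperplanes of $G_{k,l}$ lie among the rows $R_i$, which are pairwise disjoint by their definition in~\eqref{matri}. Hence every element of $G_{k,l}$ has degree at most $1$, so $S_{G_{k,l}} = \emptyset$ and $G_{k,l}$ is nilpotent; Proposition~\ref{new propo} then yields liftability.

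The second regime is $k = l = 3$, for which $G_{3,3}$ is the $3\times 3$ grid and is liftable by \textup{\cite[Example~3.10]{Fatemeh3}}.

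The third and main regime is $k, l \geq 3$ with $(k,l) \neq (3,3)$. By symmetry I may assume $l \geq 4$. The plan is to show $G_{k,l}$ is not liftable by reducing, via Lemma~\ref{liftable 2}(ii), to $G_{3,4}$, which is not liftable by \textup{\cite[Example~3.6]{Fatemeh3}}. Specifically, I will delete elements of $G_{k,l}$ one at a time, first removing the entries of rows $R_4, \ldots, R_k$ (row by row, in any internal order) and then the entries of columns $C_5, \ldots, C_l$ (column by column). By Equation~\eqref{collection of dependent hyperplanes}, the resulting matroid on the $3 \times 4$ subgrid is isomorphic to $G_{3,4}$.

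The key verification is that at each deletion step the removed element has degree at least one in the current matroid, so that Lemma~\ref{liftable 2}(ii) applies iteratively and transports non-liftability back to $G_{k,l}$. During the row phase, when deleting $\mathcal{Y}_{i,j}$ with $i \geq 4$, column $C_j$ has not yet been affected, since prior deletions lie either in rows strictly above $R_i$ or in different columns of $R_i$; hence $C_j$ has current size $i \geq 4$ and serves as a dependent hyperplane through $\mathcal{Y}_{i,j}$. During the column phase, when deleting $\mathcal{Y}_{i,j}$ with $j \geq 5$, row $R_i$ still retains all of $\mathcal{Y}_{i,1}, \ldots, \mathcal{Y}_{i,j}$ (earlier deletions within $C_j$ lie in other rows, and earlier column-phase deletions concern columns $C_{j'}$ with $j' > j$), so $R_i$ has current size $j \geq 5$ and serves as a dependent hyperplane through $\mathcal{Y}_{i,j}$. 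The main obstacle is this bookkeeping of the deletion order, but the grid structure makes it tractable.
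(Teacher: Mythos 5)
Your proof is correct and follows essentially the same route as the paper: liftability for $k<3$, $l<3$, or $k=l=3$ via nilpotency (Proposition~\ref{new propo}) and the known liftability of $G_{3,3}$ from \cite{Fatemeh3}, and non-liftability otherwise by propagating the non-liftability of $G_{3,4}$ upward through repeated applications of Lemma~\ref{liftable 2}(ii); the paper merely organizes this iteration as an induction on $k+l$, adding one full row or column at a time, whereas you spell out an explicit element-by-element deletion chain. (Your claim that column $C_j$ is ``not yet affected'' during the row phase is not literally accurate under either row ordering, but the fact your argument actually needs---that $C_j$ still contains its three entries in rows $1$--$3$ together with the entry being deleted, hence remains a dependent hyperplane of the current matroid---holds regardless, so the verification stands.)
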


\begin{proof}
We begin with the case $k < 3$. From the description of $\mathcal{L}_{G_{k,l}}$ given in Equation~\eqref{description gkl}, it follows that each element $p \in [kl]$ belongs to at most one line; namely, the row containing it. Thus, every element of $G_{k,l}$ has degree at most one, and so $G_{k,l}$ is nilpotent. By Proposition~\ref{new propo}, we conclude that $G_{k,l}$ is liftable. The case $l < 3$ is analogous. Now consider the case $k = l = 3$. By \textup{\cite[Example~3.10]{Fatemeh3}} the matroid $G_{3,3}$ is liftable.
%In this situation, the circuits of size $n$ in $G_{n,n}$ are precisely the $n$ rows and $n$ columns, giving a total of $2n$ such circuits. 
%The ground set has $n^2$ elements.
%We invoke \textup{\cite[Lemma~3.7]{Fatemeh4}}, which asserts that an $n$-paving matroid is liftable if the size of its ground set is at least the number of its size-$n$ circuits plus $n$. In our setting, $G_{n,n}$ has $n^2$ elements and exactly $2n$ circuits of size $n$, namely the rows and columns. Since
%\[
%n^2 \geq 2n + n
%\]
%for all $n \geq 3$, the hypothesis of the lemma is satisfied, and we conclude that $G_{n,n}$ is liftable.

It remains to show that $G_{k,l}$ is not liftable when $k, l \geq 3$ and $\max\{k,l\}\geq 4$. We argue by induction on $k + l$. The base case is $k + l = 7$. Without loss of generality, we may assume $k = 3$ and $l = 4$. The non-liftability of $G_{3,4}$ follows from \textup{\cite[Example~3.6]{Fatemeh3}}.

For the inductive step, suppose that $G_{k,l}$ is not liftable for all $k, l \geq 3$ with $k + l \leq m$. Let $k, l \geq 3$ with $k + l = m + 1$, and assume $k \geq l$. Then $G_{k,l}$ is obtained from $G_{k-1,l}$ by adding $l$ new elements, each of degree two (each of them lies in two lines which are its row and its column). Since $G_{k-1,l}$ is not liftable by the inductive hypothesis, it follows from Lemma~\ref{liftable 2}\,(ii) that $G_{k,l}$ is also not liftable. This completes the proof.
\end{proof}

We are now in a position to describe the nice partitions of $G_{k,l}$. For the next lemma, recall that if $k, l \geq 3$, then the collection $R \cup C$ of rows and columns of the matrix in~\eqref{matri} constitutes the set of lines of $G_{k,l}$, as specified in Equation~\eqref{collection of dependent hyperplanes}.

\begin{lemma}\label{nice RC}
Let $k, l \geq 3$, and let $\mathcal{Q} = \{Q_1, \ldots, Q_m\}$ be a partition of $R \cup C$, the set of the lines of $G_{k,l}$. Then $\mathcal{Q}$ is nice if and only if for each $i \in [m]$ with $\lvert Q_i \rvert \geq 2$, the following conditions hold:
\begin{itemize}
    \item[{\rm (i)}] $\lvert Q_i \cap R \rvert \geq 3$, $\lvert Q_i \cap C \rvert \geq 3$, and $\max\{\lvert Q_i \cap R \rvert, \lvert Q_i \cap C \rvert\} \geq 4$.
    \item[{\rm (ii)}] If $\lvert \mathcal{Q} \rvert > 1$, then neither $R \subseteq Q_i$ nor $C \subseteq Q_i$.
\end{itemize}
\end{lemma}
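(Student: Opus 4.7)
My plan is to translate each of the two conditions in the definition of a nice partition (Definition~\ref{nice}) directly into the combinatorial conditions (i) and (ii) of the statement, leveraging the description $\mathcal{L}_{G_{k,l}} = R \cup C$ from~\eqref{collection of dependent hyperplanes}. For a fixed part $Q_i$, I write $a = |Q_i \cap R|$ and $b = |Q_i \cap C|$.

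\textbf{Step 1: translating the liftability condition.} Assume $|Q_i| \geq 2$. I aim to prove that $M^{Q_i}$ is not liftable if and only if $a \geq 3$, $b \geq 3$, and $\max\{a,b\} \geq 4$. The structural observation is that an element of $M^{Q_i}$ lies in exactly two lines of $Q_i$ when it sits at the intersection of a chosen row and a chosen column, and in exactly one line otherwise. In particular, every point outside the $a \times b$ subgrid of intersection points has degree one in $M^{Q_i}$. Iterating the deletion of such degree-one points, and applying Lemma~\ref{liftable 2}(i) or (ii) as appropriate at each step, reduces the question to the paving matroid obtained by restricting $M^{Q_i}$ to the $a \times b$ subgrid. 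When $a \leq 2$ or $b \leq 2$, this restriction is either trivial or a disjoint union of parallel lines (hence nilpotent), so Proposition~\ref{new propo} gives liftability. When $a, b \geq 3$, the restriction is precisely $G_{a,b}$, and Lemma~\ref{l<n} then shows it is liftable iff $a = b = 3$. Combining these cases recovers exactly condition (i) of the lemma.

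\textbf{Step 2: translating the containment condition.} Here the plan is to show that a line $l \notin Q_i$ satisfies $l \subseteq \bigcup_{l' \in Q_i} l'$ only under the restrictive conditions appearing in condition (ii). If $l = R_j$, then every element $R_j \cap C_{j'}$ lies in the unique row $R_j \notin Q_i$ and in the unique column $C_{j'}$; the containment therefore forces $C_{j'} \in Q_i$ for every $j' \in [l]$, so $C \subseteq Q_i$. A symmetric argument handles the case where $l$ is a column. Hence condition (ii) of Definition~\ref{nice} fails iff some $Q_i$ contains all of $R$ but not all of $C$, or all of $C$ but not all of $R$. When $|\mathcal{Q}| = 1$ this is impossible and the condition holds trivially; when $|\mathcal{Q}| > 1$ no $Q_i$ equals $R \cup C$, so the failure is exactly the negation of condition (ii) of the lemma. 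The qualifier $|Q_i| \geq 2$ is automatic whenever $R \subseteq Q_i$ or $C \subseteq Q_i$, since $k, l \geq 3$.

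\textbf{Main obstacle.} The technical heart of the argument is Step 1, specifically the iterative use of Lemma~\ref{liftable 2} along the chain of paving matroids connecting $M^{Q_i}$ to $G_{a,b}$. One must verify at each deletion that the degree-one hypothesis is satisfied and that the resulting object is still a paving matroid (of the expected structure). Step 2 is essentially a routine unpacking of the incidence structure of rows and columns.
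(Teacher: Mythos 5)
Your proposal is correct and follows essentially the same route as the paper's proof: both reduce liftability of ${G_{k,l}}^{Q_i}$ to the $\lvert Q_i\cap R\rvert \times \lvert Q_i\cap C\rvert$ subgrid via Lemma~\ref{liftable 2}, invoke Lemma~\ref{l<n} (whose small cases are exactly your nilpotency argument via Proposition~\ref{new propo}), and translate condition~(ii) of Definition~\ref{nice} through the same row--column incidence argument. The only difference is cosmetic: the paper cites the extended definition of $G_{a,b}$ uniformly where you case-split on small $a,b$.
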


\begin{proof}
The partition $\mathcal{Q}$ is nice if and only if it satisfies conditions~(i) and~(ii) from Definition~\ref{nice}. By Definition~\ref{nice}, condition~(i) holds if and only if ${G_{k,l}}^{Q_{i}}$ is not liftable for each $i \in [m]$ such that $\size{Q_i} \geq 2$.

By Definition~\ref{submatroid of hyperplanes}, the matroid ${G_{k,l}}^{Q_i}$ is a paving matroid of rank three whose ground set consists of those entries of the matrix $\mathcal{Y}$ in~\eqref{matri} that lie in at least one row or column indexed by an element of $Q_i$, and whose lines are precisely those in $Q_i$.

From this, it follows that the elements of degree at least two in ${G_{k,l}}^{Q_i}$ are precisely the entries of $\mathcal{Y}$ whose row and column both lie in $Q_i$. Equivalently, these are the entries in the submatrix of $\mathcal{Y}$ determined by the rows indexed by $Q_i \cap R$ and columns indexed by $Q_i \cap C$. All remaining elements have degree one. Therefore, ${G_{k,l}}^{Q_i}$ is obtained from $G_{\size{Q_i \cap R}, \size{Q_i \cap C}}$ by adding elements of degree one. By Lemma~\ref{liftable 2}, ${G_{k,l}}^{Q_i}$ is not liftable if and only if $G_{\size{Q_i \cap R}, \size{Q_i \cap C}}$ is not liftable. By Lemma~\ref{l<n}, this occurs precisely when 
\[
\size{Q_i \cap R} \geq 3,\quad \size{Q_i \cap C} \geq 3,\quad \text{and} \quad \max\{\size{Q_i \cap R}, \size{Q_i \cap C}\} \geq 4,
\]
which is exactly condition~(i) in the statement of the lemma.

We now verify condition~(ii). Suppose that $R \subseteq Q_i$ or $C \subseteq Q_i$ for some $i \in [m]$. Then every entry of $\mathcal{Y}$ belongs to at least one line in $Q_i$, so the union $\cup_{H \in Q_i} H$ equals the full ground set $[kl]$. Hence, for condition~(ii) of Definition~\ref{nice} to hold, it must be that $Q_i = R \cup C$ in this case and $\size{\mathcal{Q}}=1$.

Now, suppose that $R \not\subseteq Q_i$ and $C \not\subseteq Q_i$ for all $i \in [m]$. We will show that Definition~\ref{nice}(ii) holds. We must show that for each $i\in [m]$ and each $l\in (R\cup C)\setminus Q_{i}$ it holds that
\[l\not \subset \textstyle\cup_{l^{\ast}\in Q_{i}}l^{\ast}.\] 
Fix any $i \in [m]$ and let $l \in (R\cup C) \setminus Q_i$. Without loss of generality, suppose $l \in R$. Since $C \not\subset Q_i$, there exists $l' \in C \setminus Q_i$. Then the entry of $\mathcal{Y}$ in row $l$ and column $l'$ does not lie in $\cup_{l^{\ast} \in Q_i} l^{\ast}$, because neither $l$ nor $l'$ belongs to $Q_i$. Hence, $l \not\subset \cup_{l^{\ast} \in Q_i} l^{\ast}$, verifying condition~(ii) of Definition~\ref{nice}.

We conclude that conditions~(i) and~(ii) from Definition~\ref{nice}, which characterize $\mathcal{Q}$ as nice, are equivalent to the corresponding conditions in the lemma. This completes the proof.
\end{proof}

With the characterization of the nice partitions of $G_{k,l}$ established in Lemma~\ref{nice RC}, we are now in a position to obtain the irredundant irreducible decomposition of $V_{\Delta}$ via Theorem~\ref{Theorem}.

\begin{theorem}\label{Theorem 2}
Let $s = t = n=3$. Then the variety $V_{\Delta}$ has the following irredundant irreducible decomposition:
\[
V_{\Delta} = \bigcup_{\mathcal{Q}} V_{G_{k,l}(\mathcal{Q})},
\]
where the union is taken over all partitions $\mathcal{Q} = \{Q_1, \ldots, Q_m\}$ of $R \cup C$ satisfying the following conditions for each $i \in [m]$ with $\size{Q_i} \geq 2$:
\begin{itemize}
    \item[{\rm (i)}] $\size{Q_i \cap R} \geq 3$, $\size{Q_i \cap C} \geq 3$, and $\max\{\size{Q_i \cap R}, \size{Q_i \cap C}\} \geq 4$.
    \item[{\rm (ii)}] If $\lvert \mathcal{Q} \rvert > 1$, then neither $R \subseteq Q_i$ nor $C \subseteq Q_i$.
\end{itemize}
\end{theorem}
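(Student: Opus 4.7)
The plan is to recognize this as a direct corollary of the two main technical results already established in the paper: the decomposition of circuit varieties of tame paving matroids (Theorem~\ref{Theorem}) and the combinatorial characterization of nice partitions of $G_{k,l}$ (Lemma~\ref{nice RC}). The entire content of Theorem~\ref{Theorem 2} is the assembly of these pieces in the specific case $s=t=n=3$, together with the explicit identification of the lines of $G_{k,l}$ with $R\cup C$.

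First, I would recall that the hypotheses $s=t=n=3\leq \min\{k,l\}$ force $k,l\geq 3$, so by Definition~\ref{matroid gkl} and Theorem~\ref{theorem grid} we have the identification
\[
V_{\Delta}=V_{\mathcal{C}(G_{k,l})}.
\]
Next, Lemma~\ref{gkl is 2 paving} tells us that $G_{k,l}$ is a tame paving matroid of rank three, so the hypotheses of Theorem~\ref{Theorem} are met. Applying that theorem yields the irredundant irreducible decomposition
\[
V_{\mathcal{C}(G_{k,l})}=\bigcup_{\mathcal{Q}} V_{G_{k,l}(\mathcal{Q})},
\]
where $\mathcal{Q}$ ranges over the nice partitions of $\mathcal{L}_{G_{k,l}}$. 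Since $k,l\geq 3$, Equation~\eqref{collection of dependent hyperplanes} identifies the collection of dependent hyperplanes of $G_{k,l}$ with $R\cup C$, so these are precisely partitions of $R\cup C$. Finally, Lemma~\ref{nice RC} translates the abstract notion of a nice partition into the explicit combinatorial conditions (i) and (ii) stated in the theorem, yielding the desired decomposition.

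The main obstacle in this proof is essentially invisible at this stage, because it has been absorbed into the supporting results. All the hard geometric work lies in Theorem~\ref{Theorem} (whose proof is deferred to Section~\ref{appendix}), while the combinatorial work of characterizing nice partitions of $G_{k,l}$ is handled in Lemma~\ref{nice RC}, which itself rests on the classification of liftable grid matroids given in Lemma~\ref{l<n}. Once those are in place, the proof of Theorem~\ref{Theorem 2} amounts to no more than substitution and reference. I would therefore present the argument as a concise three- or four-line assembly, citing each ingredient in order, and would not need to carry out any additional computation.
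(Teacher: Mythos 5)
Your proposal is correct and follows essentially the same route as the paper: the paper's proof likewise invokes Lemma~\ref{gkl is 2 paving} to verify that $G_{k,l}$ is a tame paving matroid, applies Theorem~\ref{Theorem}, and concludes via the characterization of nice partitions in Lemma~\ref{nice RC}, with the identification $V_{\Delta}=V_{\mathcal{C}(G_{k,l})}$ already in place from Definition~\ref{matroid gkl}. No gaps; your assembly is exactly what is done.
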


\begin{proof}
By Lemma~\ref{gkl is 2 paving}, the matroid $G_{k,l}$ is a tame paving matroid, so the hypotheses of Theorem~\ref{Theorem} are satisfied for it. The result then follows immediately from Theorem~\ref{Theorem} together with the characterization of nice partitions in Lemma~\ref{nice RC}.
\end{proof}

\begin{remark}
We emphasize that most of the results in this section extend to the case $s=t=n>3$. 
What prevents us from establishing an analogue of Theorem~\ref{Theorem 2} for $n>3$ 
is the difficulty of determining when the matroids $G_{k,l}$ (defined for $n>3$) 
are liftable. For $n=3$, the base cases are provided by the liftability of $G_{3,3}$ 
and the non-liftability of $G_{3,4}$ established in \cite{Fatemeh3}. 
The situation for larger $n$ is unclear.
\end{remark}

We now specialize Theorem~\ref{Theorem 2} to the case $s = t = n = k=3$, yielding the following corollary.

\begin{corollary}\label{coro}
For the case where $s=t=n=k=3$ and $l>3$, the variety $V_{\Delta}$ has the following irredundant irreducible decomposition:
\[
V_{\Delta} = V_{G_{3,l}} \cup V_{U_{2,\, 3l}},
\]
where $U_{2,\, 3l}$ denotes the uniform matroid of rank $2$ on the ground set $[3l]$.
\end{corollary}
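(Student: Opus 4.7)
The plan is to derive the corollary as a direct specialization of Theorem~\ref{Theorem 2} with $k=3$ and $l>3$. The task reduces to classifying the partitions of $R\cup C$ that satisfy conditions~(i) and~(ii) of that theorem and then identifying the matroid $G_{3,l}(\mathcal{Q})$ in each case.

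First I would enumerate all valid partitions $\mathcal{Q}=\{Q_1,\ldots,Q_m\}$ of $R\cup C$. Since $k=3$ gives $|R|=3$, condition~(i) forces every block $Q_i$ with $|Q_i|\geq 2$ to satisfy $|Q_i\cap R|\geq 3$, hence $R\subseteq Q_i$. But then condition~(ii) rules out $|\mathcal{Q}|>1$. Consequently, the only valid options are the trivial partition $\mathcal{Q}=\{R\cup C\}$ (which satisfies all the required conditions thanks to $|C|=l\geq 4$) and the discrete partition consisting of $3+l$ singletons (for which the conditions are vacuously satisfied, as they only apply to blocks of size at least two).

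Second, I would identify $G_{3,l}(\mathcal{Q})$ for each of these two partitions via Theorem~\ref{quasi} and Definition~\ref{constr}. For the discrete partition, the associated hypergraph $\mathcal{H}$ coincides with $\mathcal{L}_{G_{3,l}}$; since $G_{3,l}$ is tame by Lemma~\ref{gkl is 2 paving}, distinct lines intersect in at most one element and so no Type~1 circuits arise, while the resulting Type~2 and Type~3 circuits match Equation~\eqref{description gkl}, giving $G_{3,l}(\mathcal{Q})=G_{3,l}$. For the trivial partition, $\mathcal{H}=\{[3l]\}$ consists of a single hyperedge, so no Type~1 or Type~3 circuits can occur and every $3$-element subset of $[3l]$ is a Type~2 circuit; the associated matroid is therefore the uniform matroid $U_{2,3l}$.

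No serious obstacle is expected: the argument is a routine case enumeration driven entirely by $|R|=3$, which is small enough that condition~(i) essentially collapses the space of admissible partitions to the two extremes. Substituting the two resulting matroids into Theorem~\ref{Theorem 2} immediately yields the irredundant decomposition $V_{\Delta}=V_{G_{3,l}}\cup V_{U_{2,3l}}$.
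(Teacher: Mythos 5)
Your proposal is correct and follows essentially the same route as the paper: specialize Theorem~\ref{Theorem 2}, use $|R|=k=3$ to force any non-singleton block to contain $R$ and hence (via condition~(ii)) collapse to the trivial partition, leaving only the trivial and discrete partitions, which yield $U_{2,3l}$ and $G_{3,l}$ respectively. Your extra verifications (that $l\geq 4$ makes the trivial partition satisfy condition~(i), and the explicit identification of the two quasi-paving matroids via Theorem~\ref{quasi}) are details the paper states without elaboration, and they are accurate.
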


\begin{proof}
To determine the irreducible components in this case, it suffices to identify the partitions of $R \cup C$ satisfying conditions~(i) and~(ii) of Theorem~\ref{Theorem 2}. Let $\mathcal{Q} = \{Q_1, \ldots, Q_m\}$ be such a partition.

Fix any index $i \in [m]$ for which $\size{Q_i} \geq 2$. By condition~(i), we have $\size{Q_i \cap R} \geq 3$. Since $\size{R} = k = 3$, this implies that $R \subseteq Q_i$. Then, by condition~(ii), we must have $\size{\mathcal{Q}}=1$, and $\mathcal{Q} = \{R \cup C\}$.

The only other possibility is that each part of $\mathcal{Q}$ is a singleton, in which case no $Q_i$ has size greater than one, and both conditions are clearly satisfied.

Thus, the only two partitions satisfying the hypotheses of Theorem~\ref{Theorem 2} are:
\begin{itemize}
\item The trivial partition $\mathcal{Q}_1 = \{R \cup C\}$, for which $G_{3,l}(\mathcal{Q}_1) = U_{2, 3l}$;
\item The discrete partition $\mathcal{Q}_2 = \{\{R_1\}, \{R_{2}\}, \{R_3\}, \{C_1\}, \ldots, \{C_l\}\}$, for which $G_{3,l}(\mathcal{Q}_2) = G_{3,l}$.
\end{itemize}
The statement now follows directly from Theorem~\ref{Theorem 2}.
\end{proof}

\begin{remark}
Corollary~\ref{coro} establishes that when $s = t = n = k=3$ and $l>3$, the variety $V_{\Delta}$ has precisely two irreducible components: the matroid variety of $G_{3,l}$ and that of the uniform matroid $U_{2, 3l}$.

This generalizes a result from \cite{Fatemeh3}, where the authors verified the same decomposition in the specific case $s = t = n = k = 3$ and $l = 4$.
\end{remark}

\begin{figure}[H]
    \centering
    \includegraphics[width=0.5\textwidth, trim=0 0 0 0, clip]{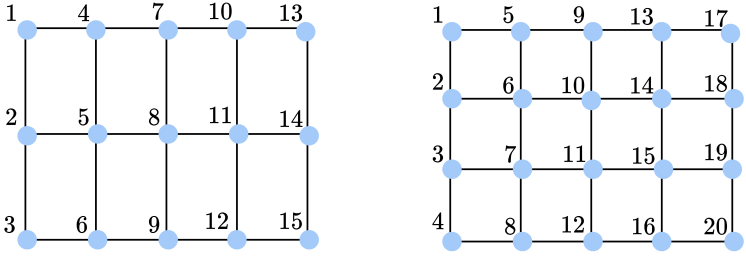}
    \caption{(Left) Matroid $G_{3,5}$; (Right) Matroid $G_{4,5}$.}
    \label{new figure 6}
\end{figure}

\begin{example}
Consider the rank-three matroid $G_{3,5}$ shown in Figure~\ref{new figure 6} (Left). By Corollary~\ref{coro}, we obtain the decomposition
\[
V_{\mathcal{C}(G_{3,5})} = V_{G_{3,5}} \cup V_{U_{2,15}}.
\]

Now consider the rank-three matroid $G_{4,5}$ illustrated in Figure~\ref{new figure 6} (Right). Applying Theorem~\ref{Theorem 2}, we find that
\begin{equation}\label{unionv}
V_{\mathcal{C}(G_{4,5})} = \bigcup_{\mathcal{Q}} V_{G_{4,5}(\mathcal{Q})},
\end{equation}
where the union is taken over all partitions $\mathcal{Q} = \{Q_{1}, \ldots, Q_{m}\}$ of $R \cup C$ satisfying the following for each $i \in [m]$ with $\size{Q_i} \geq 2$:
\begin{itemize}
\item[{\rm (i)}] $\size{Q_i \cap R} \geq 3$, $\size{Q_i \cap C} \geq 3$, and $\max\{\size{Q_i \cap R}, \size{Q_i \cap C}\} \geq 4$.
\item[{\rm (ii)}] If $\size{\mathcal{Q}} > 1$, then neither $R \subseteq Q_i$ nor $C \subseteq Q_i$.
\end{itemize}

The partitions $\mathcal{Q}$ satisfying these conditions are:
\begin{itemize}
\item The trivial partition $\{R \cup C\}$.
\item The discrete partition $\{\{R_1\}, \ldots, \{R_4\}, \{C_1\}, \ldots, \{C_5\}\}$.
\item All partitions of $R \cup C$ into three subsets: one singleton containing a column, one singleton containing a row, and one subset containing the remaining seven elements of $R\cup C$. Let $\mathcal{X}$ denote this collection of partitions.
\end{itemize}

Therefore, by \eqref{unionv}, we conclude that
\[
V_{\mathcal{C}(G_{4,5})} = V_{G_{4,5}} \cup V_{U_{2,20}} \cup \bigcup_{\mathcal{Q} \in \mathcal{X}} V_{G_{4,5}(\mathcal{Q})}.
\]

We conclude that, in this case, the variety $V_{\mathcal{C}(G_{4,5})}$ has exactly $22$ irreducible components.
\end{example}

%We conclude this section with the following natural numerical question.

%\begin{question}
%Given the description of the irreducible components of $V_{\Delta}$ in Theorem~\ref{Theorem 2}, can one compute, in purely combinatorial terms, the number of irreducible components of this variety?
%\end{question}

\section{Variety from \texorpdfstring{$n$}{n} lines in \texorpdfstring{$\mathbb{P}^{2}$}{P2}}\label{examples}

In this section, we apply Theorem~\ref{Theorem} to an interesting family of rank-three matroids. Specifically, we consider the matroids $L_n$, introduced in the following definition, which arise from a configuration $\mathcal{L}$ of $n$ lines in $\mathbb{P}^{2}$ together with their $\textstyle \binom{n}{2}$ pairwise intersection points. Our goal is to determine the irreducible components of the circuit variety associated with $L_n$.

%We first introduce the rank-three matroid $L_{n}$, defined by a set of $n$ lines and their $\textstyle \binom{n}{2}$ intersection points. %as described in \textup{\cite[Example~7.2]{Fatemeh4}}.

\begin{definition}\label{confi ln}
Let $n \geq 4$, and let $\mathcal{L}$ be a collection of $n$ lines in $\mathbb{P}^2$ in general position, meaning that no three lines of $\mathcal{L}$ meet at a common point. Let $\mathcal{P}$ denote the set of their $\textstyle \binom{n}{2}$ pairwise intersection points. The matroid $L_n$ is the rank-three paving matroid on the ground set $\mathcal{P}$, whose circuits of size three consist of all triples of distinct points lying on a common line of $\mathcal{L}$:
\[
\left\{ \{x, y, z\} \subset \mathcal{P} : \text{$x$, $y$, and $z$ lie on a common line in $\mathcal{L}$} \right\}.
\]

Equivalently, using Lemma~\ref{sub h}, $L_n$ can also be described in terms of its dependent hyperplanes (referred to as lines in rank three). The set of lines of $L_n$ is given by:
\[
\left\{ \{x \in \mathcal{P} : x \in \ell\} : \ell \in \mathcal{L} \right\}.
\]
For simplicity, we will identify the set of lines of the matroid $L_n$ with the collection $\mathcal{L}$ itself.
\end{definition}

It is important to note that the definition of the matroid $L_n$ does not depend on the specific choice of lines in $\mathcal{L}$.

\begin{example}
Figure~\ref{new figure 4} depicts the rank-three matroids $L_5$ and $L_6$. For instance, $L_5$ is defined on the ground set $[10]$, with the set of lines 
\[
\mathcal{L} = \{\{1,2,3,7\},\, \{1,5,6,10\},\, \{2,4,6,9\},\, \{3,4,5,8\},\, \{7,8,9,10\}\}.
\]
\end{example}

Observe that every element in the ground set of $L_n$ lies on exactly two lines. Hence, $L_n$ is a tame paving matroid, and the hypotheses of Theorem~\ref{Theorem} are satisfied. To apply this theorem, we must first determine which partitions of $\mathcal{L}$ are nice in the sense of Definition~\ref{nice}. For this purpose, we require the following lemma.

\begin{lemma}\label{liftable ln}
The matroid $L_n$ is not liftable.
\end{lemma}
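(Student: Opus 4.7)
The plan is to prove this by induction on $n \geq 4$, using Lemma~\ref{liftable 2}(ii) as the inductive engine to propagate non-liftability upward through a carefully chosen sequence of deletions.

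For the base case $n=4$, the matroid $L_4$ coincides combinatorially with the quadrilateral set $\textup{QS}$ of Example~\ref{example lines}: both configurations consist of $\binom{4}{2}=6$ points arranged on $4$ lines of size $3$. The non-liftability of $\textup{QS}$ is recorded in \cite[Example~3.6]{Fatemeh3}, so this case is immediate.

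For the inductive step, I fix $n \geq 5$ and assume $L_{n-1}$ is not liftable. Choose any line $\ell_n \in \mathcal{L}$ and label its intersection points with the remaining lines as $p_i = \ell_n \cap \ell_i$ for $i=1,\ldots,n-1$. Consider the chain of deletions
\[
M^{(k)} \;=\; L_n \setminus \{p_1, \ldots, p_k\}, \qquad k=0,1,\ldots,n-1,
\]
with $M^{(0)} = L_n$. A direct verification using Definition~\ref{confi ln} and Lemma~\ref{sub h} shows that $M^{(n-1)}$ is exactly $L_{n-1}$: the ground set has cardinality $\binom{n}{2}-(n-1)=\binom{n-1}{2}$, and the surviving lines are $\ell_i\setminus\{p_i\}$ for $i<n$, each of the correct size $n-2$, matching the configuration $\mathcal{L}\setminus\{\ell_n\}$.

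The crucial thing to check is that at every stage the next point to be removed has positive degree in the current matroid, so that Lemma~\ref{liftable 2}(ii) applies. For $1\le k\le n-3$, the restriction of $\ell_n$ to $M^{(k-1)}$ still has $n-k\ge 3$ elements and hence remains a dependent hyperplane, so $p_k$ lies on two lines ($\ell_k$ and the restriction of $\ell_n$) and $\deg_{M^{(k-1)}}(p_k)=2$. For $k\in\{n-2,n-1\}$, the restriction of $\ell_n$ has fewer than three elements and is no longer a dependent hyperplane, but $p_k$ still lies on $\ell_k$, which retains $n-1\ge 3$ elements, giving $\deg_{M^{(k-1)}}(p_k)=1$. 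In particular $p_k$ is a non-coloop of $M^{(k-1)}$ throughout. Starting from the inductive hypothesis that $M^{(n-1)}=L_{n-1}$ is not liftable, successive applications of Lemma~\ref{liftable 2}(ii) then yield the non-liftability of $M^{(n-2)},M^{(n-3)},\ldots,M^{(0)}=L_n$. The only mildly delicate point is the bookkeeping involved in tracking which subsets of $\ell_1,\ldots,\ell_n$ continue to qualify as dependent hyperplanes after each deletion, but this is entirely combinatorial and poses no real obstacle.
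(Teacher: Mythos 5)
Your proposal is correct and follows essentially the same route as the paper: the base case is the quadrilateral set $QS=L_4$ via \cite[Example~3.6]{Fatemeh3}, and the inductive step observes that $L_n$ and $L_{n-1}$ differ by the points on a single line, each of positive degree in the intermediate restrictions, so repeated application of Lemma~\ref{liftable 2}(ii) propagates non-liftability upward. The paper phrases this as adjoining $n$ degree-two elements to pass from $L_n$ to $L_{n+1}$, while you delete them one at a time and track degrees explicitly, but the argument is the same.
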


\begin{proof}
We proceed by induction on $n\geq 4$. For the base case $n=4$, the matroid $L_4$ coincides with the quadrilateral set $QS$ (see Figure~\ref{fig:combined} (Right)), which was shown to be not liftable in~\textup{\cite[Example~3.6]{Fatemeh3}}.

For the inductive step, assume that $L_n$ is not liftable. The matroid $L_{n+1}$ is obtained from $L_n$ by adjoining $n$ elements of degree two. By Lemma~\ref{liftable 2}~(ii), it follows that $L_{n+1}$ is also not liftable, completing the induction.
\end{proof}

%For $n\leq 3$ the matroid $L_{n}$ is nilpotent. By applying Proposition~\ref{new propo} we conclude that $L_{n}$ is liftable in these cases.

We are now in position to describe the nice partitions of $L_{n}$.

\begin{lemma}\label{nice ln}
Let $\mathcal{Q} = \{Q_{1}, \ldots, Q_{k}\}$ be a partition of $\mathcal{L}$. Then $\mathcal{Q}$ is nice if and only if $\abs{Q_{i}} \notin \{2,3,n-1\}$ for every $i \in [k]$.
\end{lemma}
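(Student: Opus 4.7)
The plan is to verify the two conditions defining a nice partition (Definition~\ref{nice}) directly for $L_n$ and translate each of them into a size restriction on the parts $Q_i$.

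I would first unpack condition~(ii): no line $\ell\notin Q_i$ is contained in $\bigcup_{\ell'\in Q_i}\ell'$. By construction of $L_n$, the line $\ell\in\mathcal{L}$ consists of the $n-1$ intersection points $p_{\ell,\ell''}$ with $\ell''\in\mathcal{L}\setminus\{\ell\}$, and by the general-position assumption on $\mathcal{L}$ each such point lies on exactly the two lines $\ell$ and $\ell''$. Hence, for $\ell\notin Q_i$, the point $p_{\ell,\ell''}$ belongs to $\bigcup_{\ell'\in Q_i}\ell'$ iff $\ell''\in Q_i$. Consequently, $\ell\subseteq \bigcup_{\ell'\in Q_i}\ell'$ iff $\mathcal{L}\setminus\{\ell\}\subseteq Q_i$, which forces $Q_i=\mathcal{L}\setminus\{\ell\}$, i.e.\ $|Q_i|=n-1$. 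Thus condition~(ii) is equivalent to $|Q_i|\ne n-1$ for every $i\in[k]$.

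Next I would analyze condition~(i). For $|Q_i|\geq 2$, the matroid $L_n^{Q_i}$ is a rank-three paving matroid on $|Q_i|(n-1)-\binom{|Q_i|}{2}$ elements, whose dependent hyperplanes are exactly the lines in $Q_i$. Its degree-2 points are the $\binom{|Q_i|}{2}$ pairwise intersections, and the remaining $|Q_i|(n-|Q_i|)$ points have degree $1$. Applying Lemma~\ref{liftable 2} iteratively (in both directions) to these degree-1 points, I would conclude that $L_n^{Q_i}$ is liftable if and only if the matroid obtained after deleting all degree-1 points is liftable. When $|Q_i|\geq 4$, this reduced matroid is precisely $L_{|Q_i|}$, which is not liftable by Lemma~\ref{liftable ln}. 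Hence condition~(i) is satisfied whenever $|Q_i|\geq 4$.

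It remains to rule out the small cases. If $|Q_i|=2$, the reduced matroid has a single degree-2 point $p$ (the unique intersection of the two lines). Then $S_M=\{p\}$, and $M|\{p\}$ is a loop-free matroid on one element with no dependent set, so $S_{M_1}=\emptyset$ and $L_n^{Q_i}$ is nilpotent. If $|Q_i|=3$, then $S_M$ consists of the three pairwise intersection points, which form an independent set because no three lines of $\mathcal{L}$ are concurrent; hence $M|S_M\cong U_{3,3}$ has no dependent sets and again $S_{M_1}=\emptyset$, so $L_n^{Q_i}$ is nilpotent. In both cases, Proposition~\ref{new propo} implies liftability, so condition~(i) fails precisely when $|Q_i|\in\{2,3\}$ for some $i$ with $|Q_i|\geq 2$; equivalently, (i) holds iff $|Q_i|\notin\{2,3\}$ for all $i$ (the condition being vacuous on singletons). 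Combining this with the analysis of (ii) yields the claimed equivalence.

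The main subtlety I expect is the reduction via successive deletion of degree-1 points, since one must invoke both implications of Lemma~\ref{liftable 2} to conclude that liftability is preserved; beyond that, the small-size cases $|Q_i|\in\{2,3\}$ require a concrete use of the general-position hypothesis to detect nilpotence, but this is direct.
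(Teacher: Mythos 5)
Your proposal is correct and follows essentially the same route as the paper: condition~(i) is reduced via Lemma~\ref{liftable 2} to the non-liftability of $L_{\lvert Q_i\rvert}$ (Lemma~\ref{liftable ln}) for $\lvert Q_i\rvert\geq 4$, with the cases $\lvert Q_i\rvert\in\{2,3\}$ handled by nilpotence and Proposition~\ref{new propo}, and condition~(ii) is translated into the size restriction $\lvert Q_i\rvert\neq n-1$ using the fact that each intersection point lies on exactly two lines. Your treatment only differs cosmetically, by spelling out the nilpotent chains for the small parts and phrasing the degree-one reduction as an equivalence, which the paper leaves implicit.
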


\begin{proof}
The partition $\mathcal{Q}$ is nice if and only if it satisfies conditions~(i) and~(ii) from Definition~\ref{nice}. By that definition, condition~(i) holds if and only if $L_{n}^{Q_{i}}$ is not liftable for every $i\in [k]$ such that $\abs{Q_{i}} \geq 2$.

By Definition~\ref{submatroid of hyperplanes}, the matroid $L_{n}^{Q_{i}}$ is a rank-three paving matroid whose ground set consists of the points of $\mathcal{P}$ lying on at least one line in $Q_{i}$, and whose lines are precisely the elements of $Q_{i}$.

If $\abs{Q_{i}} \leq 3$ for some $i \in [k]$, then it is easy to verify from Definition~\ref{def nilpotent} that $L_{n}^{Q_{i}}$ is nilpotent. In particular, Proposition~\ref{new propo} implies that $L_{n}^{Q_{i}}$ is liftable.

On the other hand, if $\abs{Q_{i}} \geq 4$, then the elements of degree at least two in $L_{n}^{Q_{i}}$ are precisely the points of $\mathcal{P}$ lying at the intersection of two lines in $Q_{i}$, while all other elements have degree one. Hence, $L_{n}^{Q_{i}}$ is obtained from $L_{\abs{Q_{i}}}$ by adding elements of degree one. Since $L_{\abs{Q_{i}}}$ is not liftable by Lemma~\ref{liftable ln}, Lemma~\ref{liftable 2} ensures that $L_{n}^{Q_{i}}$ is not liftable either.

Therefore, condition~(i) in Definition~\ref{nice} is equivalent to $\abs{Q_{i}} \notin \{2,3\}$ for every $i \in [k]$.

Now suppose that there exists an index $i \in [k]$ such that $\abs{Q_{i}} = n-1$, and let $l \in \mathcal{L}$ denote the unique line not contained in $Q_{i}$. Since every point on $l$ lies on some line in $Q_{i}$, it follows that $l \subset \cup_{l^{\ast} \in Q_{i}} l^{\ast}$,
and hence the second condition of Definition~\ref{nice} fails.

Conversely, assume that $\abs{Q_{i}} \leq n-2$ for every $i \in [k]$. We will show that condition~(ii) of Definition~\ref{nice} is satisfied in this case. That is, for each $i \in [k]$ and each $l \in \mathcal{L} \setminus Q_{i}$, we must verify that
\[
l \not\subseteq \bigcup_{l^{\ast} \in Q_{i}} l^{\ast}.
\]
Fix an arbitrary $i \in [k]$ and a line $l \notin Q_{i}$. Since $\abs{Q_{i}} \leq n-2$, there exists a line $l' \in \mathcal{L} \setminus (Q_{i} \cup \{l\})$. The intersection point of $l$ and $l'$ does not lie on any line in $Q_{i}$, as both $l$ and $l'$ do not belong to $Q_{i}$. Therefore, $l$ is not contained in the union of the lines in $Q_{i}$, and condition~(ii) holds.

We conclude that conditions~(i) and~(ii) in Definition~\ref{nice}, which characterize a nice partition, are equivalent to requiring that $\abs{Q_{i}} \notin \{2,3\}$ and $\abs{Q_{i}} \neq n-1$ for all $i \in [k]$. This completes the proof.
\end{proof}

With the characterization of nice partitions of $L_n$ given in Lemma~\ref{nice ln}, we are now in a position to describe the irreducible components of $V_{\mathcal{C}(L_n)}$ using Theorem~\ref{Theorem}.

\begin{theorem}\label{decomp ln}
The circuit variety of $L_{n}$ has the following irredundant irreducible decomposition:
\[V_{\mathcal{C}(L_{n})}=\bigcup_{\mathcal{Q}}V_{L_{n}(\mathcal{Q})},\]
where the union is taken over all partitions $\mathcal{Q}=\{Q_{1},\ldots,Q_{k}\}$ of $\mathcal{L}$ such that $\size{Q_{i}}\notin \{2,3,n-1\}$ for every $i\in [k]$.
\end{theorem}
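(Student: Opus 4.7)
The plan is to obtain this decomposition as a direct specialization of the general machinery already developed, namely Theorem~\ref{Theorem} together with the classification of nice partitions provided by Lemma~\ref{nice ln}. Thus no new geometric argument is needed: everything reduces to checking that the hypotheses of Theorem~\ref{Theorem} are met by $L_n$, and then translating the abstract ``nice partition'' condition into the concrete combinatorial condition $|Q_i|\notin\{2,3,n-1\}$.

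First I would verify that $L_n$ is a tame paving matroid. By Definition~\ref{confi ln}, $L_n$ is a rank-three paving matroid whose dependent hyperplanes are the members of $\mathcal{L}$. Because the $n$ lines of $\mathcal{L}$ are in general position in $\mathbb{P}^2$, no three of them are concurrent; equivalently, the intersection of any three members of $\mathcal{L}$ (viewed as subsets of $\mathcal{P}$) is empty. By Definition~\ref{2 paving}, this is exactly what is required for $L_n$ to be tame. Hence Theorem~\ref{Theorem} applies and yields the irredundant decomposition
\[
V_{\mathcal{C}(L_n)} \;=\; \bigcup_{\mathcal{Q}} V_{L_n(\mathcal{Q})},
\]
where the union is indexed by all nice partitions $\mathcal{Q}$ of $\mathcal{L}$.

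Next, I would invoke Lemma~\ref{nice ln}, which characterizes nice partitions of $\mathcal{L}$ precisely as those partitions $\mathcal{Q}=\{Q_1,\ldots,Q_k\}$ satisfying $|Q_i|\notin\{2,3,n-1\}$ for every $i\in[k]$. Substituting this characterization into the indexing set of the union above immediately produces the statement of the theorem.

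There is essentially no hard step here: the real content, namely that the restriction $L_n^{Q_i}$ is non-liftable precisely when $|Q_i|\ge 4$ (via Lemma~\ref{liftable ln} and Lemma~\ref{liftable 2}) and that condition (ii) of Definition~\ref{nice} fails exactly when some part has size $n-1$, has already been carried out in Lemma~\ref{nice ln}. The only subtlety worth flagging in the write-up is to make explicit the identification of the ``dependent hyperplanes of $L_n$'' appearing in Theorem~\ref{Theorem} with the set $\mathcal{L}$ appearing in the statement of the present theorem, as recorded at the end of Definition~\ref{confi ln}. Once this identification is noted, the proof is a one-line combination of Theorem~\ref{Theorem} and Lemma~\ref{nice ln}.
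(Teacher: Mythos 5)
Your proposal is correct and matches the paper's proof, which likewise cites Theorem~\ref{Theorem} together with Lemma~\ref{nice ln} (the tameness of $L_n$ being noted just before Lemma~\ref{liftable ln}, since every point of $\mathcal{P}$ lies on exactly two lines). Nothing further is needed.
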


\begin{proof}
The proof follows immediately from Theorem~\ref{Theorem} together with the characterization of nice partitions in Lemma~\ref{nice ln}.
\end{proof}

\begin{remark}
Theorem~\ref{decomp ln} provides the irreducible decomposition of $V_{\mathcal{C}(L_n)}$, extending a result from~\cite{Fatemeh3}, where the decomposition was established in the special case $n = 4$. This corresponds to the quadrilateral set configuration $QS$ shown in Figure~\ref{fig:combined} (Right).
\end{remark}

\begin{figure}[H]
    \centering
    \includegraphics[width=0.6\textwidth, trim=0 0 0 0, clip]{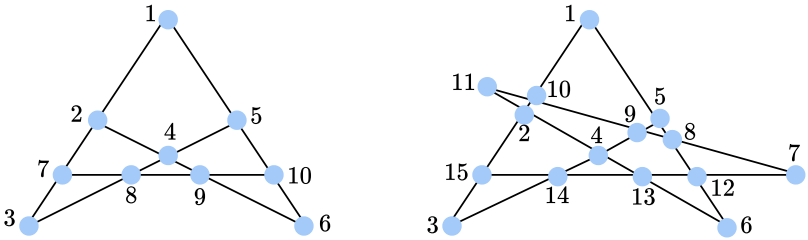}
    \caption{(Left) Matroid $L_{5}$; (Right) Matroid $L_{6}$.}
    \label{new figure 4}
\end{figure}

\begin{example}
Let us apply Theorem~\ref{decomp ln} to the matroids $L_5$ and $L_6$, illustrated in Figure~\ref{new figure 4}.

\smallskip

For $L_5$, we have the decomposition
\[
V_{\mathcal{C}(L_5)} = \bigcup_{\mathcal{Q}} V_{L_5(\mathcal{Q})},
\]
where the union is taken over all partitions $\mathcal{Q}$ of $\mathcal{L} = \{l_1, \dots, l_5\}$ such that $\abs{Q_i} \notin \{2,3,4\}$ for every part $Q_i$. There are exactly two such partitions:
\begin{itemize}
\item[{\rm (i)}] The discrete partition $\mathcal{Q}_1 = \big\{\{l_1\}, \dots, \{l_5\}\big\}$, for which $L_5(\mathcal{Q}_1) = L_5$;
\item[{\rm (ii)}] The trivial partition $\mathcal{Q}_2 = \big\{\{l_1, \dots, l_5\}\big\}$, for which $L_5(\mathcal{Q}_2) = U_{2,10}$.
\end{itemize}
Hence, the variety decomposes as
\[
V_{\mathcal{C}(L_5)} = V_{L_5} \cup V_{U_{2,10}}.
\]

\smallskip

For $L_6$, the decomposition
\[
V_{\mathcal{C}(L_6)} = \bigcup_{\mathcal{Q}} V_{L_6(\mathcal{Q})}
\]
is over all partitions $\mathcal{Q}$ of $\mathcal{L} = \{l_1, \dots, l_6\}$ such that $\abs{Q_i} \notin \{2,3,5\}$ for every part $Q_i$. These are:
\begin{itemize}
\item[{\rm (i)}] The discrete partition $\mathcal{Q}_1 = \big\{\{l_1\}, \dots, \{l_6\}\big\}$, for which $L_6(\mathcal{Q}_1) = L_6$;
\item[{\rm (ii)}] The trivial partition $\mathcal{Q}_2 = \big\{\{l_1, \dots, l_6\}\big\}$, for which $L_6(\mathcal{Q}_2) = U_{2,15}$;
\item[{\rm (iii)}] The 15 partitions of $\mathcal{L}$ with one part of size 4 and two parts of size 1. We denote this set of partitions by $\mathcal{S}$.
\end{itemize}
Thus, we obtain the decomposition
\[
V_{\mathcal{C}(L_6)} = V_{L_6} \cup V_{U_{2,15}} \cup \bigcup_{\mathcal{Q} \in \mathcal{S}} V_{L_6(\mathcal{Q})},
\]
from which we conclude that $V_{\mathcal{C}(L_6)}$ has exactly 17 irreducible components.
\end{example}

We conclude this section with a question for future research.

\begin{question}
Is it possible to obtain an analogue of Theorem~\ref{decomp ln} in higher dimensions by considering hyperplanes instead of lines?
\end{question}

\section{Number of irreducible components}\label{section: number of irreducible components}

The purpose of this section is to address the following two questions:

\begin{question}\label{question v delta}
How many irreducible components does the variety $V_{\Delta}$ from Section~\ref{sec 5} have?
\end{question}

\begin{question}\label{question v ln}
How many irreducible components does the variety $V_{\Ccal(L_{n})}$ from Section~\ref{examples} have?
\end{question}

Although we do not obtain explicit formulas for these numbers, we construct a generating function that encodes them. Our approach relies on classical results concerning generating functions of vector partitions, as we now explain. We note that deriving explicit formulas for the number of such components appears out of reach, as no closed expressions are known even for the number of vector partitions.

\subsection{Generating function for the number of vector partitions}

In this section, we recall some classical results on generating functions of vector partitions. 
For detailed proofs, we refer the reader to \cite{andrews1998theory,andrews2004integer,zbMATH03983158}. 
We begin by establishing the necessary notation and definitions.

\begin{definition}
A {\em vector partition} of $\mathbf{n}\in \ZZ_{\geq 0}^{d}$ is a multiset of vectors in $\ZZ_{\geq 0}^{d}\setminus \{0\}$ whose sum is $\fn$.
\end{definition}

\begin{notation}
We fix the following notation:
\vspace*{-.5em}

\hspace*{-1cm}\parbox{\linewidth}{
\begin{itemize}[label=$\blacktriangleright$]
\item For an integer vector $\mathbf{n}=(n_{1},\ldots,n_{d})\in \ZZ_{\geq 0}^{d}$, we denote \[\fn!=n_{1}!\cdots n_{d}!, \quad  x^{\fn}=x^{n_{1}}\cdots x^{n_{d}}\quad  \text{and} \quad [\fn]=[n_{1}]\amalg \cdots \amalg [n_{d}].\]
\item For a vector partition $\fl=(\mathbf{v}_{1},\ldots,\mathbf{v}_{d})$ of $\fn$, where $\mathbf{v}_{i}=(v_{i1},\ldots,v_{id})$, we denote 
\[\fl!=\mathbf{v}_{1}!\cdots \mathbf{v}_{d}!.\]
\item We denote the set of all vector partitions of a vector $\mathbf{n}\in \ZZ_{\geq 0}^{d}$ as $\mathcal{P}_{d}(\fn)$.\vs
\item We set $p_{d}(\fn)=\size{\mathcal{P}_{d}(\fn)}$.\vs
\item For a subset $S \subset \ZZ_{\geq 0}^{d}\setminus \{0\}$, we denote by $\mathcal{P}_{d}^{S}(\fn)$ the set of all vector partitions of $\fn$ in which none of its parts belongs to $S$, and set 
\[
p_{d}^{S}(\fn) := \bigl|\mathcal{P}_{d}^{S}(\fn)\bigr|.
\]
\end{itemize}}
\end{notation}

The next theorem gives the generating function for the number of vector partitions.

\begin{theorem}
The generating function of $p_{d}$ is
\[\sum_{\fn\in \ZZ_{\geq 0}^{d}}p_{d}(\fn)x^{\fn}=\prod_{\mathbf{m}\in \ZZ_{\geq 0}^{d}\setminus \{0\}}\frac{1}{1-x^{\mathbf{m}}}.\]
\end{theorem}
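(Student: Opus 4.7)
The plan is to establish the identity by expanding each factor on the right-hand side as a geometric series and recognizing the coefficient of $x^{\fn}$ as the number of vector partitions of $\fn$.

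First, I would verify that the infinite product on the right-hand side is well defined as an element of the formal power series ring $\ZZ[[x_1,\ldots,x_d]]$. The point is that for a fixed target exponent $\fn$, any factor $\frac{1}{1-x^{\mathbf{m}}}$ whose index $\mathbf{m}$ is not componentwise bounded by $\fn$ contributes only the constant term $1$ to the coefficient of $x^{\fn}$. Since only finitely many nonzero $\mathbf{m}\in\ZZ_{\geq 0}^d$ satisfy $m_i\leq n_i$ for all $i$, the coefficient of each $x^{\fn}$ in the formal product is given by a finite sum, so the product makes sense.

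Next, I would expand each factor geometrically as
\[\frac{1}{1-x^{\mathbf{m}}}=\sum_{k_{\mathbf{m}}\geq 0}x^{k_{\mathbf{m}}\mathbf{m}}\]
and multiply out to obtain
\[\prod_{\mathbf{m}\in\ZZ_{\geq 0}^d\setminus\{0\}}\frac{1}{1-x^{\mathbf{m}}}=\sum_{(k_{\mathbf{m}})}x^{\sum_{\mathbf{m}} k_{\mathbf{m}}\mathbf{m}},\]
where the outer sum runs over all functions $\mathbf{m}\mapsto k_{\mathbf{m}}\in\ZZ_{\geq 0}$ with finite support, indexed by $\ZZ_{\geq 0}^d\setminus\{0\}$.

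The final step is the combinatorial identification: a finitely supported assignment $\mathbf{m}\mapsto k_{\mathbf{m}}$ with $\sum_{\mathbf{m}}k_{\mathbf{m}}\mathbf{m}=\fn$ corresponds bijectively to the vector partition of $\fn$ in which each $\mathbf{m}\in\ZZ_{\geq 0}^d\setminus\{0\}$ appears with multiplicity $k_{\mathbf{m}}$; this is a bijection onto $\mathcal{P}_d(\fn)$. Collecting contributions exponent by exponent then yields
\[[x^{\fn}]\prod_{\mathbf{m}\in\ZZ_{\geq 0}^d\setminus\{0\}}\frac{1}{1-x^{\mathbf{m}}}=p_d(\fn),\]
which is the claimed identity. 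The argument is essentially classical and contains no serious obstacle; the only delicate point is the well-definedness of the infinite product, which the finiteness observation in the first step resolves.
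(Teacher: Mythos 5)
Your proof is correct and follows essentially the same route as the paper, which states this classical result with references and sketches exactly the argument you give: expand each factor $\frac{1}{1-x^{\mathbf{m}}}$ as a geometric series recording the multiplicity of the part $\mathbf{m}$, and identify the resulting multiplicity assignments with vector partitions of $\fn$. Your additional check that the infinite product is well defined as a formal power series (only finitely many $\mathbf{m}\leq\fn$ componentwise contribute to the coefficient of $x^{\fn}$) is a welcome precision the paper leaves implicit.
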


The intuition behind this formula is the following. Given a partition $\fl$ of $\fn$, we may decide independently, for each vector $\mathbf{m} \in \ZZ_{\geq 0}^{d}\setminus \{0\}$, how many times it appears as a part of $\fl$. Each occurrence of $\mathbf{m}$ contributes $\mathbf{m}$ to the total sum $\fn$. The generating series that encodes any number of repetitions of $\mathbf{m}$ is therefore 
\[
1 + x^{\mathbf{m}} + x^{2\mathbf{m}} + \cdots = \frac{1}{1-x^{\mathbf{m}}}.
\]
Multiplying these factors over all $\mathbf{m}$ yields the desired product formula. Using this interpretation, it is clear that
\begin{equation}\label{pds}\sum_{\fn\in \ZZ_{\geq 0}^{d}}p_{d}^{S}(\fn)x^{\fn}=\prod_{\mathbf{m}\in \ZZ_{\geq 0}^{d}\setminus (S\cup \{0\})}\frac{1}{1-x^{\mathbf{m}}}.\end{equation}

\begin{definition}\label{def: S-admissible}
Let $S \subset \ZZ_{\geq 0}^{d}\setminus \{0\}$ and let $\fn \in \ZZ_{\geq 0}^{d}$.
\begin{itemize}
\item A partition $\pi$ of $[\fn]=[n_{1}]\amalg \cdots \amalg [n_{d}]$ is called \emph{$S$-admissible} if there is no vector $\mathbf{s} = (s_1, \dots, s_d) \in S$ and block $B \in \pi$ such that $B$ contains exactly $s_i$ elements from $[n_i]$ for all $i=1,\dots,d$.
\item The set of all $S$-admissible partitions of $[\fn]$ is denoted by $\mathcal{Q}_S([\fn])$, and we write 
\[
q_S(\fn) := \bigl| \mathcal{Q}_S([\fn]) \bigr|.
\]
\end{itemize}
\end{definition}

Note that each partition $\pi$ of $[\fn]$ gives rise to a vector partition $\fl(\pi)$ of $\fn$, obtained by associating to every block $B\in \pi$ the vector whose $i$-th entry records the number of elements of $B$ coming from $[n_{i}]$, for $i=1,\ldots,d$. Clearly, $\pi$ is $S$-admissible if and only if $\fl(\pi)\in \mathcal{P}_{d}^{S}(\fn)$.  

Furthermore, a straightforward combinatorial argument shows that for each vector partition $\tau=(\tau_{1},\ldots,\tau_{d})$ of $\fn$, the number of set partitions $\pi$ of $[\fn]$ satisfying $\fl(\pi)=\tau$ is precisely
\[
\frac{\fn!}{\tau!\,\prod_{\mathbf{m}} a_{\mathbf{m}}!},
\]
where $a_{\mathbf{m}}$ denotes the multiplicity of the vector $\mathbf{m}$ in $\tau$.  

Incorporating these observations into the generating function for $p_{d}^{S}(\fn)$ from~\eqref{pds}, we deduce that the exponential generating function of $q_{S}$ is
\[
\sum_{\fn\in \ZZ_{\geq 0}^{d}} \frac{q_{S}(\fn)x^{\fn}}{\fn!}
= \prod_{\mathbf{m}\in \ZZ_{\geq 0}^{d}\setminus (S\cup \{0\})} 
\sum_{j\geq 0} \frac{(x^{\mathbf{m}})^{j}}{\mathbf{m}!^{\,j}\,j!}.
\]

Using the standard notation
\[
\exp(x)=\sum_{j\geq 0}\frac{x^{j}}{j!},
\]
we may present the formula more cleanly as follows.

\begin{proposition}\label{prop: exp gen function}
The exponential generating function of $q_{S}$ is
\[
\sum_{\fn\in \ZZ_{\geq 0}^{d}} \frac{q_{S}(\fn)x^{\fn}}{\fn!}
= \prod_{\mathbf{m}\in \ZZ_{\geq 0}^{d}\setminus (S\cup \{0\})}
\exp\!\left(\frac{x^{\mathbf{m}}}{\mathbf{m}!}\right).
\]
\end{proposition}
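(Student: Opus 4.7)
The plan is to verify the identity by expanding the right-hand product and matching coefficients of $x^{\fn}/\fn!$ on both sides. The key combinatorial input, already indicated in the paragraph preceding the proposition, is the enumeration formula
\[
q_{S}(\fn) \;=\; \sum_{\tau\in \mathcal{P}_{d}^{S}(\fn)} \frac{\fn!}{\tau!\,\prod_{\mathbf{m}} a_{\mathbf{m}}!},
\]
where $a_{\mathbf{m}}$ denotes the multiplicity of $\mathbf{m}$ as a part of $\tau$. First I would justify this count via the map $\pi\mapsto \fl(\pi)$: a set partition $\pi$ of $[\fn]$ is $S$-admissible exactly when $\fl(\pi)\in \mathcal{P}_{d}^{S}(\fn)$, and for a fixed vector partition $\tau$ with multiplicities $(a_{\mathbf{m}})$ the fiber $\{\pi:\fl(\pi)=\tau\}$ has cardinality $\fn!/(\tau!\,\prod a_{\mathbf{m}}!)$. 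The factor $\fn!/\tau!$ is the multinomial coefficient counting ordered sequences of disjoint blocks with prescribed vector-sizes, and the division by $\prod a_{\mathbf{m}}!$ mods out by reorderings of blocks sharing the same vector-size.

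Next I would divide by $\fn!$, sum over $\fn\in \ZZ_{\geq 0}^{d}$, and reindex the resulting double sum by the multiplicity tuple $(a_{\mathbf{m}})_{\mathbf{m}\in \ZZ_{\geq 0}^{d}\setminus (S\cup \{0\})}$, where each $a_{\mathbf{m}}\in \ZZ_{\geq 0}$ and only finitely many are nonzero. Writing $x^{\fn}=\prod_{\mathbf{m}}(x^{\mathbf{m}})^{a_{\mathbf{m}}}$ and $\tau!=\prod_{\mathbf{m}}(\mathbf{m}!)^{a_{\mathbf{m}}}$, the identity is transformed into
\[
\sum_{\fn\in \ZZ_{\geq 0}^{d}}\frac{q_{S}(\fn)x^{\fn}}{\fn!} \;=\; \sum_{(a_{\mathbf{m}})}\prod_{\mathbf{m}\notin S\cup\{0\}} \frac{1}{a_{\mathbf{m}}!}\left(\frac{x^{\mathbf{m}}}{\mathbf{m}!}\right)^{a_{\mathbf{m}}}.
\]
Finally, this sum over all finitely-supported tuples factors as an infinite product over $\mathbf{m}$, and recognizing $\sum_{a\geq 0} y^{a}/a!=\exp(y)$ with $y=x^{\mathbf{m}}/\mathbf{m}!$ in each factor yields the claimed identity.

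The only delicate point I anticipate is ensuring that the manipulations make sense in the ring of formal power series indexed by $\ZZ_{\geq 0}^{d}$. I would address this by noting that any monomial $x^{\fn}$ receives contributions only from tuples $(a_{\mathbf{m}})$ with $a_{\mathbf{m}}>0$ forcing $\mathbf{m}\leq \fn$ componentwise, of which there are finitely many. Hence the infinite product is well-defined coefficient by coefficient, and the factorization and reindexing steps are valid term-by-term, completing the proof.
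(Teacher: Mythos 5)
Your proposal is correct and follows essentially the same route as the paper: it uses the fiber count $\fn!/(\tau!\,\prod_{\mathbf{m}} a_{\mathbf{m}}!)$ for set partitions mapping to a given $S$-avoiding vector partition, then reindexes the exponential generating function by multiplicity tuples to factor it as $\prod_{\mathbf{m}\notin S\cup\{0\}}\exp\!\left(x^{\mathbf{m}}/\mathbf{m}!\right)$. Your additional remark on coefficientwise well-definedness of the infinite product is a sound (if implicit in the paper) finishing touch.
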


This result will play a central role in the following subsections, where we answer Questions~\ref{question v delta} and~\ref{question v ln}.

\subsection{Counting the irreducible components of $V_{\Delta}$}

In this subsection we address Question~\ref{question v delta} by deriving a generating function for the number of irreducible components of the variety $V_{\Delta}$ from Section~\ref{sec 5}. Recall that $V_{\Delta}$ is defined in terms of two parameters $k,l \geq 3$. Throughout this subsection, we assume $k,l > 4$. Indeed, by Corollary~\ref{coro}, the variety $V_{\Delta}$ has exactly two irreducible components when $(k,l)=(3,l)$ with $l>3$ or $(k,l)=(k,3)$ with $k>3$, while for $k=l=3$ the variety $V_{\Delta}$ is irreducible.

\begin{definition}
We denote by  
\[
c(k,l)=\#\{\text{irreducible components of } V_{\Delta}\},
\]
where $\Delta$ is the hypergraph defined in~\eqref{def delta} in terms of $k$ and $l$.
\end{definition}

By Theorem~\ref{Theorem 2}, the value $c(k,l)$ coincides with the number of partitions $\mathcal{Q} = \{Q_1,\ldots,Q_m\}$ of $R \cup C$ satisfying the following conditions for each $i \in [m]$ with $\lvert Q_i \rvert \geq 2$:
\begin{itemize}
    \item[{\rm (i)}] $\lvert Q_i \cap R \rvert \geq 3$, $\lvert Q_i \cap C \rvert \geq 3$, and $\max\{\lvert Q_i \cap R \rvert, \lvert Q_i \cap C \rvert\}\geq 4$,
    \item[{\rm (ii)}] If $\lvert \mathcal{Q} \rvert > 1$, then neither $R \subseteq Q_i$ nor $C \subseteq Q_i$.
\end{itemize}
Here, $R$ and $C$ denote the row and column sets of the matrix $\mathcal{Y}$ from~\eqref{matri}, and we may identify
\[
R \cup C \;\cong\; [k]\cup [\tilde{l}] \;=\; \{1,\ldots,k\}\cup \{\tilde{1},\ldots,\tilde{l}\}.
\]

\medskip

To phrase these conditions in the language of admissible partitions, let $k,l$ vary over the natural numbers.  

\begin{definition}
Define the subset $S\subset \mathbb{Z}_{\geq 0}^{2}\setminus\{0\}$ by
\begin{equation}\label{def S}
S \;:=\; \big(\{(a,b): 0\leq a<3 \ \text{or} \ 0\leq b<3 \}\cup \{(3,3)\}\big)\setminus \{(0,0),(1,0),(0,1)\}.
\end{equation}
\end{definition}

With this identification, $c(k,l)$ is precisely the number of partitions $\pi$ of $[k]\cup[\tilde{l}]$ such that:
\begin{itemize}
\item[{\rm (i)}] $\pi$ is $S$-admissible,
\item[{\rm (ii)}] If $\pi \neq \{[k]\cup [\tilde{l}]\}$, then no block $B \in \pi$ contains all of $[k]$ or all of $[\tilde{l}]$.
\end{itemize}

By Definition~\ref{def: S-admissible}, the number of partitions satisfying (i) is given by $q_S(k,l)$, whose exponential generating function is described in Proposition~\ref{prop: exp gen function}. It remains to subtract those partitions that satisfy (i) but fail condition (ii).  

\begin{lemma}\label{lem: bkl}
The number of partitions of $[k]\cup [\tilde{l}]$ that satisfy \textup{(i)} but not \textup{(ii)} is
\[
b(k,l)\;:=\;\frac{2^{k+1}-(k^{2}+k+4)\;+\;2^{l+1}-(l^{2}+l+4)}{2}.
\]
\end{lemma}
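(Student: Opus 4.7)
The plan is to count, by inclusion–exclusion, the $S$-admissible partitions of $[k]\cup[\tilde{l}]$ that differ from the trivial partition $\{[k]\cup[\tilde{l}]\}$ and contain some block engulfing all of $[k]$ or all of $[\tilde{l}]$. Let $A$ denote the collection of $S$-admissible partitions $\pi\neq\{[k]\cup[\tilde{l}]\}$ possessing a block $B$ with $[k]\subseteq B$, and let $A'$ be the symmetric set for $[\tilde{l}]$. Then $b(k,l)=|A|+|A'|-|A\cap A'|$, so the task reduces to evaluating each term.

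To compute $|A|$, I would use the hypothesis $k>4$ together with the definition of $S$ in~\eqref{def S} to pin down the structure of $\pi\in A$. If $B\supseteq[k]$ is the distinguished block and $T=B\cap[\tilde{l}]$, then the admissibility of $B$ --- equivalently, $(k,|T|)\notin S$ --- forces $|T|\geq 3$, since $k\geq 5$ rules out the exceptional vectors $(0,0),(1,0),(0,1),(3,3)$ and puts $a=k$ safely above $3$. Every other block $B'$ of $\pi$ is disjoint from $B$, hence has size vector $(0,|B'|)$; admissibility then forces $|B'|=1$, because $(0,t)\in S$ for every $t\geq 2$. Thus every $\pi\in A$ is uniquely determined by a subset $T\subsetneq[\tilde{l}]$ with $|T|\geq 3$, the strict inclusion being precisely what excludes the trivial partition. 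Summing over such $T$ yields
\[
|A|=\sum_{t=3}^{l-1}\binom{l}{t}=2^{l}-1-l-\tbinom{l}{2}-1=\frac{2^{l+1}-l^{2}-l-4}{2},
\]
and an entirely symmetric argument gives $|A'|=(2^{k+1}-k^{2}-k-4)/2$.

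It remains to verify that $A\cap A'=\emptyset$. If the blocks $B\supseteq[k]$ and $B'\supseteq[\tilde{l}]$ coincide, then $B\supseteq[k]\cup[\tilde{l}]$, producing the trivial partition, which has been removed from both $A$ and $A'$. If instead $B\neq B'$, their disjointness forces $B=[k]$ and $B'=[\tilde{l}]$; but for $k,l>4$ both $(k,0)$ and $(0,l)$ lie in $S$, so neither block is admissible. Adding $|A|$ and $|A'|$ then produces the stated expression for $b(k,l)$. The main subtlety of the argument lies in correctly parsing the definition of $S$ --- in particular, the roles of the four excluded vectors and the distinguished pair $(3,3)$ --- to deduce the rigid structure of the partitions in $A$ and $A'$; once that step is in place, the rest is binomial bookkeeping.
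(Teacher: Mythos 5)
Your proof is correct and follows essentially the same route as the paper: identify that any such partition consists of one block $[k]\cup T$ with $3\le |T|\le l-1$ (all other blocks forced to be singletons by $S$-admissibility), count the choices of $T$ binomially, and add the symmetric case. The only difference is that you explicitly verify the two families are disjoint via the signatures $(k,0),(0,l)\in S$, a point the paper leaves implicit when it simply adds the two counts.
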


\begin{proof}
Suppose $\pi$ satisfies (i) but not (ii). Then some block $B\in \pi$ contains either all of $[k]$ or all of $[\tilde{l}]$. Assume the former, so that $B=[k]\cup A$ for some $A\subset [\tilde{l}]$. Condition (i) forces $|A|\geq 3$, while condition (ii) rules out $A=[\tilde{l}]$. Thus $\pi$ has the form
\[
\pi \;=\; \{B\}\cup\{\{i\}: i\in [\tilde{l}]\setminus A\},
\]
with $3\leq \size{A}\leq l-1$. Hence the number of possibilities for $A$ is
\[
\binom{l}{3}+\binom{l}{4}+\cdots+\binom{l}{l-1}
\;=\;\frac{2^{l+1}-(l^{2}+l+4)}{2}.
\]
The case where $[\tilde{l}]\subset B$ is analogous, giving the claimed expression for $b(k,l)$.
\end{proof}

\begin{lemma}\label{lem: exp b}
The exponential generating function of $b$ is 
\[\sum_{k,l\ge0}\frac{b(k,l)\,x^{k}y^{l}}{k!\,l!}=e^{2x+y}+e^{x+2y}-\tfrac{1}{2}\bigl(x^{2}+y^{2}+2x+2y+8\bigr)e^{x+y}.\]
\end{lemma}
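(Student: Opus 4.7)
The plan is to substitute the explicit formula for $b(k,l)$ from Lemma~\ref{lem: bkl} into the double sum, split it by linearity into a part depending only on $k$ and a symmetric part depending only on $l$, and then factor each piece as a product of standard one-variable exponential generating functions. Concretely, since
\[
b(k,l)\;=\;\tfrac{1}{2}\bigl(2^{k+1}-k^{2}-k-4\bigr)\;+\;\tfrac{1}{2}\bigl(2^{l+1}-l^{2}-l-4\bigr),
\]
the double sum splits as
\[
\sum_{k,l\ge 0}\frac{b(k,l)\,x^{k}y^{l}}{k!\,l!}
\;=\;\tfrac{1}{2}\!\left(\sum_{k\ge 0}\frac{(2^{k+1}-k^{2}-k-4)\,x^{k}}{k!}\right)\!e^{y}\;+\;\tfrac{1}{2}\,e^{x}\!\left(\sum_{l\ge 0}\frac{(2^{l+1}-l^{2}-l-4)\,y^{l}}{l!}\right).
\]

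Next, I would compute the inner one-variable series using standard identities. We have $\sum_{k\ge 0}\frac{2^{k+1}x^{k}}{k!}=2e^{2x}$ and $\sum_{k\ge 0}\frac{4x^{k}}{k!}=4e^{x}$. Moreover, differentiating $e^{x}=\sum_{k}x^{k}/k!$ once and applying $x\frac{d}{dx}$ yields $\sum_{k\ge 0}\frac{kx^{k}}{k!}=xe^{x}$, and applying $x\frac{d}{dx}$ once more gives $\sum_{k\ge 0}\frac{k^{2}x^{k}}{k!}=(x+x^{2})e^{x}$. Combining,
\[
\sum_{k\ge 0}\frac{(k^{2}+k+4)\,x^{k}}{k!}\;=\;(x^{2}+2x+4)\,e^{x},
\]
so
\[
\sum_{k\ge 0}\frac{(2^{k+1}-k^{2}-k-4)\,x^{k}}{k!}\;=\;2e^{2x}-(x^{2}+2x+4)\,e^{x}.
\]

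Substituting this (and the symmetric identity in $y$) back, the $k$-dependent piece contributes $e^{2x+y}-\tfrac{1}{2}(x^{2}+2x+4)\,e^{x+y}$, while the $l$-dependent piece contributes $e^{x+2y}-\tfrac{1}{2}(y^{2}+2y+4)\,e^{x+y}$. Adding them yields
\[
e^{2x+y}+e^{x+2y}-\tfrac{1}{2}\bigl(x^{2}+y^{2}+2x+2y+8\bigr)e^{x+y},
\]
as claimed. This is essentially routine, so there is no real obstacle; the only point requiring a little care is handling the $k=0$ (and $l=0$) terms correctly in the derivative identities, which is why I would phrase the differentiation argument in the form $\sum_{k\ge 0}\frac{kx^{k}}{k!}=x\frac{d}{dx}e^{x}$ so that the starting index is unambiguous.
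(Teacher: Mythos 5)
Your proposal is correct and follows essentially the same route as the paper: split $b(k,l)$ into its $k$-part and $l$-part, factor out $e^{y}$ (resp.\ $e^{x}$), and evaluate the one-variable sums using $\sum_{k\ge 0}\tfrac{kx^{k}}{k!}=xe^{x}$ and $\sum_{k\ge 0}\tfrac{k^{2}x^{k}}{k!}=(x^{2}+x)e^{x}$, then add the two symmetric contributions. The computations match the paper's, so there is nothing to add.
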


\begin{proof}
Note that 
\begin{equation}
\begin{aligned}
\sum_{k,l\ge0}\frac{(2^{k}-\frac{k^{2}+k+4}{2})\,x^{k}y^{l}}{k!\,l!}&=(\sum_{k\geq 0}\frac{(2^{k}-\frac{k^{2}+k+4}{2})\,x^{k}}{k!})\sum_{l\geq 0}\frac{y^{l}}{l!}\\
&=(\sum_{k\geq 0}\frac{(2^{k}-\frac{k^{2}+k+4}{2})\,x^{k}}{k!})e^{y}\\
&=(e^{2x}-\frac{e^{x}}{2}(x^{2}+2x+4))e^{y}\\
&=e^{2x+y}-\frac{e^{x+y}}{2}(x^{2}+2x+4),
\end{aligned}
\end{equation}
where we are using $\sum_{k\geq 0}\tfrac{kx^{k}}{k!}=xe^{x}$ and $\sum_{k\geq 0}\tfrac{k^{2}x^{k}}{k!}=(x^{2}+x)e^{x}$. Performing the analogous argument for $l$, we obtained the desired formula.
\end{proof}

\medskip

Since $c(k,l)=q_S(k,l)-b(k,l)$, combining Proposition~\ref{prop: exp gen function} with Lemma~\ref{lem: exp b} yields the following generating function.

\begin{theorem}\label{theorem: count components 1}
The exponential generating function of $c$ is
\begin{equation}\label{eq:egf-c}
\begin{split}
\sum_{k,l\ge0}\frac{c(k,l)\,x^{k}y^{l}}{k!\,l!}
&=
\prod_{\substack{(m_{1},m_{2})\in\mathbb{Z}_{\ge0}^2\\\notin\,S\cup\{0\}}}
\exp\!\Big(\frac{x^{m_{1}}y^{m_{2}}}{m_{1}!\,m_{2}!}\Big) \\[4pt]
&\quad -\Bigl(e^{2x+y}+e^{x+2y}-\tfrac{1}{2}\bigl(x^{2}+y^{2}+2x+2y+8\bigr)e^{x+y}\Bigr),
\end{split}
\end{equation}
where $S$ is the set defined in~\eqref{def S}.
\end{theorem}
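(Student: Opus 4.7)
The plan is to express $c(k,l)$ as the difference $q_S(k,l)-b(k,l)$ and then combine the two exponential generating function identities already at hand, namely Proposition~\ref{prop: exp gen function} and Lemma~\ref{lem: exp b}. First I would invoke Theorem~\ref{Theorem 2} to identify $c(k,l)$ with the number of partitions $\pi$ of the set $[k]\cup[\tilde{l}]$ (thought of as $R\cup C$) satisfying the two conditions recalled immediately after the definition of $c(k,l)$, and then verify that condition~(i) is precisely the $S$-admissibility of $\pi$ for the set $S$ defined in~\eqref{def S}. The translation is straightforward: a block $B\subset[k]\cup[\tilde{l}]$ corresponds to the vector $(\size{B\cap[k]},\size{B\cap[\tilde{l}]})$, and the pairs ruled out by condition~(i) of Theorem~\ref{Theorem 2} are exactly the elements of $S$, while the unit vectors $(1,0)$ and $(0,1)$, corresponding to singleton blocks, are permitted because condition~(i) only constrains blocks of size $\geq 2$.

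Next, by Definition~\ref{def: S-admissible} the number of partitions of $[k]\cup[\tilde{l}]$ satisfying condition~(i) alone is $q_S(k,l)$, while Lemma~\ref{lem: bkl} shows that exactly $b(k,l)$ of these violate condition~(ii). Hence $c(k,l)=q_S(k,l)-b(k,l)$, so
\[
\sum_{k,l\ge0}\frac{c(k,l)\,x^{k}y^{l}}{k!\,l!}=\sum_{k,l\ge0}\frac{q_S(k,l)\,x^{k}y^{l}}{k!\,l!}-\sum_{k,l\ge0}\frac{b(k,l)\,x^{k}y^{l}}{k!\,l!}.
\]
Applying Proposition~\ref{prop: exp gen function} in dimension $d=2$ to the first sum and Lemma~\ref{lem: exp b} to the second yields exactly the right-hand side of~\eqref{eq:egf-c}.

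The main subtlety, and essentially the only step that requires care, is the combinatorial dictionary between Theorem~\ref{Theorem 2} and the set $S$ of~\eqref{def S}: one must check that $S$ captures all forbidden block-shapes for blocks of size at least two (in particular, $(3,3)$ must be included, since condition~(i) requires $\max\{\size{Q_i\cap R},\size{Q_i\cap C}\}\geq 4$), while $(1,0)$ and $(0,1)$ must be excluded from $S$ so that singleton blocks remain legal. Once this correspondence is verified, every remaining step is a direct substitution into the generating-function identities proved earlier in the section.
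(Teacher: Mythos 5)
Your proposal is correct and follows essentially the same route as the paper: identify $c(k,l)$ with the partitions of $R\cup C$ from Theorem~\ref{Theorem 2}, observe that condition~(i) is exactly $S$-admissibility for the set $S$ of~\eqref{def S}, write $c(k,l)=q_S(k,l)-b(k,l)$ using Lemma~\ref{lem: bkl}, and conclude by Proposition~\ref{prop: exp gen function} and Lemma~\ref{lem: exp b}. Your explicit check that $(3,3)\in S$ while $(1,0),(0,1)\notin S$ is precisely the dictionary the paper relies on, so there is nothing missing.
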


\begin{table}[H]
    \centering
    \begin{tabular}{c|c}
        $(k,l)$ & $c(k,l)$ \\\hline 
          (4,4) & 2  \\ 
          (4,5) & 22 \\ 
          (5,5) & 127 \\ 
           (4,6) & 86 \\ 
           (5,6) & 417 \\  
    \end{tabular}
    \caption{Number of irreducible components of $V_{\Delta}$ for small values of $k,l$.}
    \label{tab:my_label 2}
\end{table}

\subsection{Counting the irreducible components of $V_{\Ccal(L_{n})}$}

In this subsection we address Question~\ref{question v ln} by deriving a generating function for the number of irreducible components of the circuit variety $V_{\Ccal(L_{n})}$ from Section~\ref{examples}. 

\begin{definition}
For each $n \geq 0$, we define
\[
c(n)=\#\{\text{irreducible components of } V_{\Ccal(L_{n})}\}.
\]
\end{definition}

\begin{theorem}\label{thm: count components 2}
The exponential generating function of $c$ is
\[
\sum_{n\geq 0}\frac{c(n)x^{n}}{n!}
=\prod_{m\in \mathbb{Z}_{\geq 0}\setminus \{0,2,3\}}
\exp\!\left(\frac{x^{m}}{m!}\right) - x e^{x}.
\]
\end{theorem}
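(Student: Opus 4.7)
The plan is to combine Theorem~\ref{decomp ln} with the generating function machinery from Proposition~\ref{prop: exp gen function}. By Theorem~\ref{decomp ln}, $c(n)$ is exactly the number of set partitions of the $n$-element ground set $\mathcal{L}$ whose block sizes all avoid $\{2,3,n-1\}$. My strategy is to split the forbidden sizes into the $n$-independent part $\{2,3\}$ and the $n$-dependent part $\{n-1\}$, and to handle each piece separately.

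First, I would apply Proposition~\ref{prop: exp gen function} with $d=1$ and $S=\{2,3\}$: letting $q_{S}(n)$ denote the number of partitions of $[n]$ with no block of size $2$ or $3$, this immediately yields
\[
\sum_{n\geq 0}\frac{q_{S}(n)\,x^{n}}{n!}=\prod_{m\in\mathbb{Z}_{\geq 0}\setminus\{0,2,3\}}\exp\!\left(\frac{x^{m}}{m!}\right),
\]
which is precisely the product appearing on the right-hand side of the claimed identity. This series overcounts $c(n)$, because it still includes partitions containing a block of the forbidden variable size $n-1$, which must be removed.

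Next I would correct $q_{S}(n)$ down to $c(n)$ by subtracting the partitions counted by $q_{S}(n)$ that contain a block of size $n-1$. The key observation is that any such partition of $[n]$ must consist of exactly one block of size $n-1$ together with one singleton, so there are precisely $n$ of them (indexed by the choice of the singleton element). For $n\geq 5$ both block sizes $n-1$ and $1$ lie in $\mathbb{Z}_{\geq 0}\setminus\{0,2,3\}$, ensuring that these $n$ partitions are indeed among those enumerated by $q_{S}(n)$, whence $c(n)=q_{S}(n)-n$. Since the exponential generating function of the sequence $n\mapsto n$ is
\[
\sum_{n\geq 0}\frac{n\,x^{n}}{n!}=xe^{x},
\]
subtracting this correction from the previous product yields the claimed identity at the level of formal power series.

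The single delicate point is the identification of the overcount: verifying that a partition of $[n]$ with a block of size $n-1$ necessarily has the rigid form $\{[n]\setminus\{i\},\{i\}\}$, which follows from the trivial observation that the complement of an $(n-1)$-block in an $n$-set is a single element. The very small values $n\in\{2,3,4\}$, where $n-1$ coincides with an already-forbidden size, can be inspected by hand and fall outside the range $n\geq 4$ for which $L_{n}$ is defined in Definition~\ref{confi ln}; so no genuine obstacle arises beyond a careful bookkeeping of the correction term and a direct application of Proposition~\ref{prop: exp gen function} in dimension one.
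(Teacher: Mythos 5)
Your proposal is correct and follows essentially the same route as the paper's proof: identify $c(n)=q_{\{2,3\}}(n)-n$ via Theorem~\ref{decomp ln}, obtain the product from Proposition~\ref{prop: exp gen function} applied with $d=1$ and $S=\{2,3\}$, and subtract $xe^{x}$ as the exponential generating function of the sequence $n$. The one inaccuracy is your remark that $n=4$ falls outside the range of Definition~\ref{confi ln} (it does not, since $L_{n}$ is defined for $n\geq 4$, and there $n-1=3$ is already a forbidden block size, so the subtraction of $n$ behaves differently); however, the paper's own proof subtracts $n$ uniformly and silently glosses over this same boundary case, so your treatment is if anything more attentive to the delicate point than the published argument.
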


\begin{proof}
By Theorem~\ref{decomp ln}, the value $c(n)$ coincides with the number of partitions $\pi$ of $[n]$ such that no block $B \in \pi$ has size $2$, $3$, or $n-1$. By Definition~\ref{def: S-admissible}, the number of partitions avoiding block sizes $2$ and $3$ is given by $q_{\{2,3\}}(n)$. 

On the other hand, there are exactly $n$ partitions of $[n]$ containing a block of size $n-1$. Thus,
\[
c(n)=q_{\{2,3\}}(n)-n.
\]
Applying Proposition~\ref{prop: exp gen function}, we obtain
\[
\sum_{n\geq 0}\frac{c(n)x^{n}}{n!}
=\prod_{m\in \mathbb{Z}_{\geq 0}\setminus \{0,2,3\}}
\exp\!\left(\frac{x^{m}}{m!}\right)
-\sum_{n\geq 0}\frac{nx^{n}}{n!}.
\]
Since $\sum_{n\geq 0}\tfrac{nx^{n}}{n!}=xe^{x}$, the claim follows.
\end{proof}

\begin{table}[H]
    \centering
    \begin{tabular}{c|c}
        $n$ & $c(n)$ \\\hline 
          4 & 2  \\ 
          5 & 2 \\ 
          6 & 17 \\ 
          7 & 58 \\ 
          8 & 191 \\  
    \end{tabular}
    \caption{Number of irreducible components of $V_{\Ccal(L_{n})}$ for small values of $n$.}
    \label{tab:my_label}
\end{table}

\section{Proof of Theorem~\ref{Theorem}}\label{appendix}

In this section, we prove Theorem~\ref{Theorem}. Before proceeding, we introduce the notion of a \emph{perturbation}, understood as an arbitrarily small motion. This concept, which also appears in~\cite{hocsten2004ideals}, will play a central role in the arguments that follow.

\begin{definition}
Let $\gamma \subset \mathbb{C}^n$ be a collection of vectors, and let $X$ be a property of such collection. We say that a \emph{perturbation} can be applied to $\gamma$ to obtain a new collection satisfying $X$ if, for every $\epsilon > 0$, one can choose, for each $v \in \gamma$, a vector $\widetilde{v} \in \mathbb{C}^n$ such that $\lVert v - \widetilde{v} \rVert < \epsilon$, and the set $\{\widetilde{v} : v \in \gamma\}$ satisfies property $X$. When referring to a perturbation of a $k$-dimensional subspace $S \subset \mathbb{C}^n$, we fix a basis $\{v_1, \ldots, v_k\}$ of $S$ and apply a perturbation to this basis. The perturbed vectors then span a new $k$-dimensional subspace.
\end{definition}

Before turning to the proof of Theorem~\ref{Theorem}, we need to establish a series of auxiliary lemmas.

\begin{lemma}\label{rango}
Let $M$ be a tame paving matroid of rank $n$ with collection of dependent hyperplanes $\mathcal{L}$, and let $\mathcal{Q} = \{Q_1, \ldots, Q_k\}$ be a nice partition of $\mathcal{L}$. Denote by $\rank$ the rank function of $M(\mathcal{Q})$. Then the following hold:
\begin{itemize}
\item[{\rm (i)}] For every $l \in \mathcal{L}$, we have $\rank(l) = n - 1$.
\item[{\rm (ii)}] If $l_1, l_2 \in \mathcal{L}$ lie in different parts of the partition $\mathcal{Q}$, then $\rank(l_1 \cup l_2) = n$.
\end{itemize}
\end{lemma}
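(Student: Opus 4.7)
The plan is to work directly with the hypergraph $\mathcal{H}=\{H_1,\ldots,H_k\}$ with $H_i=\bigcup_{l\in Q_i}l$ that defines $M(\mathcal{Q})$, exploiting the interaction between tameness and niceness. Two elementary observations set the stage. First, tameness of $M$ forces each element of $[d]$ to lie in at most two dependent hyperplanes, hence in at most two members of $\mathcal{H}$, so $H_{i_1}\cap H_{i_2}\cap H_{i_3}=\emptyset$ whenever $i_1,i_2,i_3$ are distinct. Second, niceness condition~(ii) immediately gives $H_j\not\subseteq H_i$ for $i\neq j$, since any such inclusion would place every $l\in Q_j$ inside $H_i$; combined with $|H_i|\ge n$, Proposition~\ref{is a flat}(iii) then yields $\rank(H_i)=n-1$ for every $i$.

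For part~(i), fix $l\in Q_s$. The inequality $\rank(l)\le\rank(H_s)=n-1$ is immediate, and the goal is to exhibit an $(n-1)$-subset $B\subseteq l$ independent in $M(\mathcal{Q})$. Such a $B$ is dependent only by being itself a Type~1 circuit; since $B\subseteq H_s$ is automatic, the requirement reduces to $B\not\subseteq H_j$ for every $j\neq s$. Each $x\in l$ lies in $H_s$ and in at most one other $H_j$, giving a disjoint decomposition $l=l_s\sqcup\bigsqcup_{j\neq s}l_j$. If $l_s\neq\emptyset$, any element of $l_s$ lies in no $H_j$ with $j\neq s$, so any $(n-1)$-set containing such an element works. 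If instead $l_s=\emptyset$, applying niceness~(ii) separately to each $j\neq s$ produces an element of $l$ outside $H_j$, which in this case must belong to some $l_{j'}$ with $j'\neq s,j$; this forces at least two distinct indices $j_1,j_2\neq s$ with $l_{j_1},l_{j_2}$ both nonempty, and any $B$ containing an element from each of these two blocks satisfies the required condition. In either case the bound $|l|\ge n$ permits $B$ to be completed to size $n-1$.

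For part~(ii), let $l_1\in Q_s$ and $l_2\in Q_t$ with $s\neq t$. Since $M(\mathcal{Q})$ has rank at most $n$, it suffices to produce an $n$-element independent subset of $l_1\cup l_2$. I will take $B_1\subseteq l_1$ from part~(i) and, using niceness~(ii), pick $x\in l_2\setminus H_s$; because $l_1\subseteq H_s$, the element $x$ does not lie in $B_1$, so $|B_1\cup\{x\}|=n$. Independence amounts to ruling out the two possible circuit types within $B_1\cup\{x\}$. A Type~2 circuit $B_1\cup\{x\}\subseteq H_i$ fails for $i=s$ since $x\notin H_s$, and fails for $i\neq s$ because it would place $B_1$, a set of size $n-1$, inside $H_s\cap H_i$ and hence make $B_1$ a Type~1 circuit, contradicting its independence. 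A Type~1 subcircuit $(B_1\cup\{x\})\setminus\{y\}\subseteq H_i\cap H_j$ with $i\neq j$ would place the nonempty set $B_1\setminus\{y\}$ inside $H_s\cap H_i\cap H_j$; the triple-intersection fact then forces $s\in\{i,j\}$, so $x\in H_s$, another contradiction.

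The only subtle step is the case $l_s=\emptyset$ in part~(i), where one must use niceness~(ii) to produce two distinct indices $j_1,j_2\neq s$ with $l_{j_1},l_{j_2}$ both nonempty; this is the only place where niceness is invoked beyond its immediate consequence $H_j\not\subseteq H_i$. All remaining arguments are direct consequences of the two structural observations of the first paragraph together with Proposition~\ref{is a flat}.
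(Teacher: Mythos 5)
Your proof is correct and follows essentially the same route as the paper: both reduce part (i) to producing an $(n-1)$-subset of $l$ not contained in any $H_j$ with $j\neq s$ (using niceness~(ii) together with the empty triple intersections forced by tameness), and both prove part (ii) by adjoining a point $x\in l_2\setminus H_s$ to that independent set and ruling out Type~1 and Type~2 circuits, your only organizational differences being the case split in (i) by whether $l$ contains an element lying in no other $H_j$ (the paper splits on whether some $|l\cap H_j|\geq n-1$) and your use of Proposition~\ref{is a flat}(iii) to get $\rank(H_i)=n-1$. The one point to tighten is the Type~1 check in part (ii) when the removed element $y$ equals $x$: there your final inference ``so $x\in H_s$'' does not apply, but the contradiction is immediate since the subcircuit is then $B_1$ itself, which would make $B_1$ a Type~1 circuit against part (i) --- precisely the subcase the paper's proof handles explicitly.
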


\begin{proof}
Recall from Definition~\ref{constr} that $M(\mathcal{Q})$ is the quasi-paving matroid obtained by applying the construction of Theorem~\ref{quasi} to the hypergraph $\mathcal{H} = \{H_1, \ldots, H_k\}$ and the integer $n$, where each $H_i$ is defined by $H_i = \cup_{l \in Q_i} l$.

\medskip
(i) Without loss of generality, suppose $l \in Q_1$. Since $\rank(H_1) \leq n - 1$, we have $\rank(l) \leq n - 1$. Given that $\mathcal{Q}$ is nice, condition~(ii) of Definition~\ref{nice} ensures that $l \not\subseteq H_j$ for every $j \neq 1$. Moreover, note that since $M$ is tame, each element of $l$ belongs to at most one other dependent hyperplane. To show that $\rank(l) = n - 1$, it suffices to exhibit a subset $B = \{x_1, \ldots, x_{n-1}\} \subset l$ that is independent in $M(\mathcal{Q})$. We consider two cases:

\medskip
\textbf{Case 1.} Suppose there exists $j \neq 1$ such that $|l \cap H_j| \geq n - 1$. Choose $B_1 \subseteq l \cap H_j$ with $|B_1| = n - 2$, and pick $x \in l \setminus H_j$. Then $B = B_1 \cup \{x\} \subset l$, and $B \not\subseteq H_j$. Since $B$ is not contained in the intersection of two sets of $\mathcal{H}$, it is independent.

\medskip
\textbf{Case 2.} Suppose $|l \cap H_j| \leq n - 2$ for every $j \neq 1$. Then any subset of $n - 1$ elements in $l$ is not contained in the intersection of two sets in $\mathcal{H}$, and is therefore independent.

\medskip
(ii) Assume without loss of generality that $l_1 \in Q_1$. From part~(i), there exists an independent set $\{x_1, \ldots, x_{n-1}\} \subset l_1$ in $M(\mathcal{Q})$. Since $\mathcal{Q}$ is nice, condition~(ii) of Definition~\ref{nice} implies that $l_2 \not\subseteq  H_1$, so we can choose $x \in l_2 \setminus H_1$. We claim that the set $\mathcal{B} = \{x, x_1, \ldots, x_{n-1}\}$ is independent. As $\mathcal{B} \subset l_1 \cup l_2$, this would imply $\rank(l_1 \cup l_2) = n$.

To prove the claim, we verify that $\mathcal{B}$ is neither contained in any $H_r$, and that no subset of size $n-1$ of $\mathcal{B}$ is included in the intersection of two sets $H_r \cap H_s$.

\medskip
(1) Suppose $\mathcal{B} \subset H_r$ for some $r \in [k]$. Then $\{x_1, \ldots, x_{n-1}\} \subset H_1 \cap H_r$, contradicting their independence, as they would form a circuit of Type~1.

\medskip
(2) Suppose there exist $r, s \in [k]$ and a subset $\{y_1, \ldots, y_{n-1}\} \subset \mathcal{B}$ such that $\{y_1, \ldots, y_{n-1}\} \subset H_r \cap H_s$. Since $x_1, \ldots, x_{n-1} \in H_1$ and the intersection of any three elements of $\mathcal{H}$ is empty, one of $r$ or $s$ must be $1$; assume without loss of generality that $r = 1$. Since $x \notin H_1$, the only possibility is $\{y_1, \ldots, y_{n-1}\} = \{x_1, \ldots, x_{n-1}\}$, implying again that they lie in $H_1 \cap H_s$, forming a Type~1 circuit, which again contradicts the independence of $\{x_{1},\ldots,x_{n-1}\}$.

\smallskip
Hence, $\mathcal{B}$ is independent, and $\rank(l_1 \cup l_2) = n$, as claimed.
\end{proof}

%This section presents the irreducible decomposition of circuit varieties associated with tame paving matroids. 
%Before proceeding, we recall the concept of an \textit{infinitesimal motion}, which refers to a motion that can be made arbitrarily small.

%\begin{definition}%[{Infinitesimal motion}]
%Let $\gamma$ denote a collection of vectors in $\CC^{n}$, and let $X$ denote a specific property. We say that an {\em infinitesimal motion} can be applied to $\gamma$ to generate a new collection of vectors that satisfies the property $X$ if, for every $\epsilon > 0$, we can select, for each vector $v \in \gamma$, a vector $\widetilde{v}$ such that $\lVert v - \widetilde{v} \rVert < \epsilon$, ensuring that the new collection also satisfies property $X$.  

%When considering an {\em infinitesimal motion} of a $k$-dimensional subspace $S \subset \mathbb{C}^n$, we represent $S$ as an extensor $v_1 \cdots v_k$ and apply an infinitesimal motion to the set $\{v_i : i \in [k]\}$. This yields another $k$-dimensional subspace, corresponding to the extensor formed by the new vectors.
%\end{definition}

\begin{comment}
\begin{lemma}\label{liftable 3}
Let $M$ be an $n$-paving matroid without points of degree greater than two. Suppose that $\rank(S_{M})<n$ or that $\rank(S_{M})=n$ and the submatroid $S_{M}$ is liftable. Then $M$ is liftable. 
\end{lemma}

\begin{proof}
%Assume that $\rank(S_{M})<n$. This in particular implies that $M$ is nilpotent, then applying \textup{\cite[Proposition~4.13]{Fatemeh4}} we have that $M$ is liftable. Now assume that $\rank(S_{M})=n$ and $S_{M}$ is liftable. To prove that $M$ is liftable, we have to prove that any collection of vectors $\gamma=\{\gamma_{p}:p\in \mathcal{P}\}$ of rank $n-1$ within a hyperplane $H$ can be lifted to a collection of vectors in $V_{\mathcal{C}(M)}$ of rank $n$, from any $q\notin H$. We separate in cases.
The result follows for repeatedly applying Lemma~\ref{liftable 2} for the points in $M\backslash S_{M}$.
\end{proof}
\end{comment}

For the next lemma, we recall the definition of $\gamma_{l}$ from Definition~\ref{pav 2}.

\begin{lemma}\label{rango n-1}
Let $M$ be a tame paving matroid of rank $n$ on $[d]$. Then for any $\gamma \in V_{\mathcal{C}(M)}$, there exists a perturbation yielding $\widetilde{\gamma} \in V_{\mathcal{C}(M)}$ such that $\dim(\widetilde{\gamma}_{l}) = n-1$ for every $l \in \mathcal{L}$.
\end{lemma}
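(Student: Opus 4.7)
I plan to prove the lemma by induction on the total deficiency
\[
\sigma(\gamma) := \sum_{l \in \mathcal{L}} \bigl(n-1 - \dim \gamma_l\bigr),
\]
which is a non-negative integer and whose vanishing is exactly the desired conclusion. The base case $\sigma(\gamma) = 0$ is immediate, so I focus on the inductive step: given $\gamma$ with $\sigma(\gamma) > 0$, I will produce $\tilde\gamma \in V_{\mathcal{C}(M)}$ arbitrarily close to $\gamma$ with $\sigma(\tilde\gamma) < \sigma(\gamma)$. Applying this finitely many times, with a sufficiently small perturbation at each step, then yields the required $\tilde\gamma$.

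For the inductive step, choose $l_0 \in \mathcal{L}$ with $\dim \gamma_{l_0} = k \leq n-2$. Since $|l_0| \geq n > k$, the vectors $\{\gamma_q : q \in l_0\}$ are linearly dependent, so there exists $p \in l_0$ with $\gamma_p \in \sspan(\gamma_q : q \in l_0,\, q \neq p)$. I then perturb only at $p$, setting $\tilde\gamma_p = \gamma_p + \epsilon v$ and $\tilde\gamma_q = \gamma_q$ for every $q \neq p$. Because $\gamma_p$ is already in the span of the other vectors of $l_0$, a direct check gives $\tilde\gamma_{l_0} = \gamma_{l_0} + \sspan(v)$, so $\dim \tilde\gamma_{l_0} = k+1$ as soon as $v \notin \gamma_{l_0}$.

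The tameness of $M$ enters here decisively: the point $p$ lies in at most one other hyperplane $l'$, and only the constraint $\dim \tilde\gamma_{l'} \leq n-1$ has to be balanced against the requirement $v \notin \gamma_{l_0}$. If $p$ has degree one, any $v \notin \gamma_{l_0}$ suffices. If $p$ has degree two and $\dim \gamma_{l'} \leq n-2$, any such $v$ still works, because $\tilde\gamma_{l'} = \gamma_{l'} + \sspan(v)$ has dimension at most $n-1$. The delicate case is $\dim \gamma_{l'} = n-1$: here I take $v \in \gamma_{l'} \setminus \gamma_{l_0}$, a non-empty set since $\dim \gamma_{l'} = n-1 > k = \dim \gamma_{l_0}$ forces $\gamma_{l'} \not\subset \gamma_{l_0}$; such a $v$ keeps $\tilde\gamma_{l'} = \gamma_{l'}$ unchanged.

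With these choices $\tilde\gamma \in V_{\mathcal{C}(M)}$, $\dim \tilde\gamma_{l_0}$ increases by one, and no other $\dim \tilde\gamma_l$ decreases (for any $l''$ not containing $p$, one simply has $\tilde\gamma_{l''} = \gamma_{l''}$), so $\sigma(\tilde\gamma) < \sigma(\gamma)$ and the induction closes. The main obstacle in the argument is precisely the last case above, where $p$ is forced to be of degree two and $l'$ is already at maximal dimension $n-1$: the strict inequality $\dim \gamma_{l'} > \dim \gamma_{l_0}$ is exactly what provides room to pick $v$ inside $\gamma_{l'}$ but outside $\gamma_{l_0}$, and the tame hypothesis is what prevents a third hyperplane through $p$ from imposing an incompatible further constraint.
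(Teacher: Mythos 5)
Your argument is correct in substance, and it takes a genuinely different route from the paper. The paper fixes one deficient hyperplane $l$ at a time in a single multi-point move: it chooses $n-1$ points of $l$, picks an ambient hyperplane $H_{l}\supset \gamma_{l}$ together with ambient hyperplanes $H_{l'}$ for the second hyperplanes through those points, selects linearly independent directions $v_{i}\in H_{l}\cap H_{l_{i}}$ via the Hall--Rado theorem, and perturbs all $n-1$ vectors simultaneously by a generic small $\lambda$, so that $\dim(\tau_{l})=n-1$ at once while every $\tau_{l'}$ stays inside its fixed $H_{l'}$; it then iterates over deficient hyperplanes. You instead raise dimensions one unit at a time by a single-point perturbation, organized as an induction on the total deficiency $\sigma(\gamma)$. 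Both proofs exploit tameness in exactly the same way (each point lies on at most two dependent hyperplanes, so only one extra constraint competes with the gain at $l_{0}$). Your version is more elementary -- no Hall--Rado, no choice of auxiliary ambient hyperplanes -- and the induction on $\sigma$ makes termination of the iteration completely explicit; the paper's one-shot move buys a shorter global argument, since keeping all spans inside the fixed $H_{l'}$ disposes of every other constraint simultaneously.

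One step of yours deserves an explicit justification, because the strict decrease $\sigma(\tilde\gamma)<\sigma(\gamma)$ depends on it: the claim that $\dim\tilde\gamma_{l'}$ does not decrease for the second hyperplane $l'$ through $p$. Your parenthetical only covers hyperplanes not containing $p$, and the identity $\tilde\gamma_{l'}=\gamma_{l'}+\sspan(v)$ you invoke in the case $\dim\gamma_{l'}\le n-2$ is in general only an inclusion $\tilde\gamma_{l'}\subseteq\gamma_{l'}+\sspan(v)$ (which suffices for membership in $V_{\mathcal{C}(M)}$, but not for non-decrease). The fix is one line: writing $W=\sspan(\gamma_{q}:q\in l'\setminus\{p\})$, a drop can only occur if $\gamma_{p}\notin W$ but $\gamma_{p}+\epsilon v\in W$, and this happens for at most one nonzero value of $\epsilon$; hence every sufficiently small $\epsilon\neq 0$ avoids it, and the same observation shows $\tilde\gamma_{l'}=\gamma_{l'}$ in your delicate case $\dim\gamma_{l'}=n-1$ for small $\epsilon$. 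With that genericity remark added, the induction closes and the proof is complete.
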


\begin{proof}
Let $l \in \mathcal{L}$ be any dependent hyperplane for which $\dim(\gamma_{l}) < n-1$. We will show that one can perturb $\gamma$ to obtain $\tau \in V_{\mathcal{C}(M)}$ with $\dim(\tau_{l}) = n-1$. Iterating this procedure over all such $l$ then yields the desired $\widetilde{\gamma}$.

Choose a subset $\{p_1, \ldots, p_{n-1}\} \subset l$, and let $l_1, \ldots, l_{n-1}$ denote the dependent hyperplanes, distinct from $l$, that contain the points $p_1, \ldots, p_{n-1}$, respectively. For simplicity, we assume that each $p_i$ lies in exactly two dependent hyperplanes; the general case can be treated similarly with minor adjustments.

Since $\gamma \in V_{\mathcal{C}(M)}$, for each $l' \in \mathcal{L}$ there exists a hyperplane $H_{l'} \subset \CC^n$ such that $\gamma_{l'} \subset H_{l'}$. In particular, as $\dim(\gamma_l) \leq n-2$, we can choose a hyperplane $H_l \subset \CC^n$ so that $\gamma_{l}\subset H_{l}$ and each intersection
\[
H_l \cap H_{l_1}, \ldots, H_l \cap H_{l_{n-1}}
\]
has dimension $n-2$, and not all of them coincide. Applying the Hall--Rado theorem~\cite[Theorem~2.1]{pym1970submodular}, we can choose vectors $v_i \in H_l \cap H_{l_i}$ such that $\{v_1, \ldots, v_{n-1}\}$ is linearly independent.

Since these vectors form a basis, there exists $\lambda \in \CC$ arbitrarily close to zero for which the vectors
\[
\gamma_{p_1} + \lambda v_1, \ldots, \gamma_{p_{n-1}} + \lambda v_{n-1}
\]
are linearly independent. Define the perturbed configuration $\tau$ by setting
\begin{itemize}
    \item $\tau_p = \gamma_p$ for all $p \notin \{p_1, \ldots, p_{n-1}\}$,
    \item $\tau_{p_i} = \gamma_{p_i} + \lambda v_i$ for $i = 1, \ldots, n-1$.
\end{itemize}
By construction, $\tau$ is a perturbation of $\gamma$ and satisfies $\tau_{l'} \subset H_{l'}$ for every $l' \in \mathcal{L}$. Hence, $\tau \in V_{\mathcal{C}(M)}$. Moreover, we have $\dim(\tau_l) = n-1$, completing the proof.
\end{proof}

Henceforth, given a matroid $M$ on $[d]$ and a tuple of vectors $\gamma = (\gamma_1, \ldots, \gamma_d)$, we write $\restr{\gamma}{S}$ to denote the restriction $(\gamma_p : p \in S)$ for any subset $S\subset [d]$. We also recall the notion of principal extension introduced in Definition~\ref{def principal extension}.

\begin{lemma}\label{rank F}
Let $M'$ be a matroid on the ground set $[d]$, and let $F$ be a flat of $M'$. Consider the principal extension $M = M' +_{F} a$ on $[d] \cup \{a\}$, and let $\gamma \in V_{\mathcal{C}(M)}$. Suppose that $\restr{\gamma}{[d]} \in V_{M'}$ and that $\rank(\restr{\gamma}{F}) = \rank(F)$. Then $\gamma \in V_{M}$.
\end{lemma}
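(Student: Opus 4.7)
The plan is to realize $\gamma$ as a limit of points in $\Gamma_M$, so that $\gamma\in \overline{\Gamma_M}=V_M$. Since $\restr{\gamma}{[d]}\in V_{M'}$, one can choose a family $\{\eta^{(\epsilon)}\}\subset \Gamma_{M'}$ with $\eta^{(\epsilon)}\to \restr{\gamma}{[d]}$ as $\epsilon\to 0$; the task then reduces to extending each $\eta^{(\epsilon)}$ by a vector $v^{(\epsilon)}\in \CC^n$ close to $\gamma_a$ so that $(\eta^{(\epsilon)},v^{(\epsilon)})\in \Gamma_M$, and then passing to the limit.

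First I would establish the preliminary fact that $\gamma_a\in \text{span}(\gamma_F)$. For any $S$, since every subset of $S$ of size $\rank_M(S)+1$ is dependent in $M$ and $\gamma\in V_{\mathcal{C}(M)}$, such a subset has linearly dependent $\gamma$-image; hence $\dim \text{span}(\gamma_S)\leq \rank_M(S)$. Applying this to $S=F\cup\{a\}$, using $\rank_M(F\cup\{a\})=\rank_{M'}(F)$ (a consequence of $a\in\operatorname{cl}_M(F)$ in the principal extension, cf.\ Definition~\ref{def principal extension}) together with the hypothesis $\dim\text{span}(\gamma_F)=\rank(F)$, gives $\text{span}(\gamma_{F\cup\{a\}})=\text{span}(\gamma_F)$ and so $\gamma_a\in \text{span}(\gamma_F)$.

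Next, because $\eta^{(\epsilon)}\in \Gamma_{M'}$ one has $\dim \text{span}(\eta^{(\epsilon)}_F)=\rank(F)$, and as $\eta^{(\epsilon)}_F\to \gamma_F$ with matching dimensions these subspaces converge to $\text{span}(\gamma_F)$. Therefore one can choose $w^{(\epsilon)}\in \text{span}(\eta^{(\epsilon)}_F)$ with $w^{(\epsilon)}\to \gamma_a$. It remains to perturb $w^{(\epsilon)}$ inside $\text{span}(\eta^{(\epsilon)}_F)$ to avoid finitely many forbidden subspaces. Unwinding Definition~\ref{def principal extension}, a set $I'\cup\{a\}$ with $I'\subset [d]$ is independent in $M$ if and only if $I'$ is independent in $M'$ and $F\not\subseteq \operatorname{cl}_{M'}(I')$. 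For each such $I'$, since $F$ is a flat of $M'$ and $F\cap \operatorname{cl}_{M'}(I')\subsetneq F$, the intersection $\text{span}(\eta^{(\epsilon)}_F)\cap \text{span}(\eta^{(\epsilon)}_{I'})$ is a proper subspace of $\text{span}(\eta^{(\epsilon)}_F)$. The complement of the (finite) union of these subspaces is open and dense, so one can pick $v^{(\epsilon)}\in \text{span}(\eta^{(\epsilon)}_F)$ arbitrarily close to $w^{(\epsilon)}$ (and hence to $\gamma_a$) lying outside every such subspace.

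Finally, I would verify $(\eta^{(\epsilon)},v^{(\epsilon)})\in \Gamma_M$: independent sets not containing $a$ are handled because $\eta^{(\epsilon)}\in \Gamma_{M'}$; for any circuit $C$ of $M$ containing $a$ one has $F\subseteq \operatorname{cl}_{M'}(C\setminus\{a\})$, so $v^{(\epsilon)}\in \text{span}(\eta^{(\epsilon)}_F)\subseteq \text{span}(\eta^{(\epsilon)}_{C\setminus\{a\}})$ and the required linear dependence is automatic; and the independent sets of the form $I'\cup\{a\}$ remain linearly independent by the avoidance condition. Passing to the limit $\epsilon\to 0$ then yields $\gamma\in \overline{\Gamma_M}=V_M$. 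The main obstacle is ensuring that $v^{(\epsilon)}$ can simultaneously converge to $\gamma_a$ and avoid all forbidden subspaces, but this is manageable because the forbidden locus in each $\text{span}(\eta^{(\epsilon)}_F)$ is only a finite union of proper subspaces.
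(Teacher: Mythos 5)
Your proposal is correct and follows essentially the same route as the paper's proof: perturb $\restr{\gamma}{[d]}$ to a realization of $M'$ and then place the vector for $a$ generically inside the (perturbed) span of $F$, using the rank hypothesis so that this span is a small perturbation of $\operatorname{span}(\gamma_p: p\in F)$. You simply make explicit several steps the paper leaves implicit, such as $\gamma_a\in\operatorname{span}(\gamma_p:p\in F)$ and the characterization of independent sets of the principal extension used for the genericity argument.
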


\begin{proof}
Since $\restr{\gamma}{[d]} \in V_{M'}$, we can perturb it to obtain $\widetilde{\gamma} \in \Gamma_{M'}$. The assumption $\rank(\restr{\gamma}{F}) = \rank(F)$ ensures that the subspace $\text{span}(\widetilde{\gamma}_{p} : p \in F)$ is a perturbation of $\text{span}(\gamma_{p} : p \in F)$. Then, we apply a perturbation of $\gamma_{a}$ to obtain $\widetilde{\gamma}_a \in \text{span}(\widetilde{\gamma}_{p} : p \in F)$ such that $\widetilde{\gamma}_a \notin \text{span}(\widetilde{\gamma}_{p} : p \in F')$ for any proper flat $F \subsetneq F'$. This guarantees that $\widetilde{\gamma} \in \Gamma_M$, and hence $\gamma \in V_M$.
\end{proof}

We will also need the following lemma.

\begin{lemma}\label{prop no iguales}
Let $M$ be an $n$-quasi-paving matroid of rank $n$ on $[d]$, defined by the collection of subsets $\mathcal{H} = \{H_{1}, \ldots, H_{k}\}$. Suppose $\gamma \in V_{\mathcal{C}(M)}$ is such that for any distinct $i, j \in [k]$, the spans $\textup{span}(\gamma_{p} : p \in H_{i})$ and $\textup{span}(\gamma_{p} : p \in H_{j})$ are not equal to the same hyperplane in $\CC^{n}$. Then $\gamma \in V_{M}$.
\end{lemma}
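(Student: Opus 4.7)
The plan is to leverage the structural result in Proposition~\ref{extension}: since $M$ is an $n$-quasi-paving matroid of rank $n$, it arises from a tame paving matroid $M_{0}$ via a sequence of principal extensions
\[
M_{0}\to M_{1}\to \cdots \to M_{r}=M, \qquad M_{j}=M_{j-1}+_{F_{j}}a_{j},
\]
where each $F_{j}$ is a flat of rank $n-2$. The strategy is two-fold: first reduce the problem to showing that $\gamma^{(0)}:=\restr{\gamma}{E(M_{0})}\in V_{M_{0}}$, and then lift this back to $\gamma\in V_{M}$ by iterated applications of Lemma~\ref{rank F}.

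Before either step, I would apply Lemma~\ref{rango n-1} to perturb $\gamma$ within $V_{\mathcal{C}(M)}$ so that $\dim(\gamma_{l})=n-1$ for every dependent hyperplane $l$; since the distinctness hypothesis is an open condition, it is preserved by sufficiently small perturbations. After this perturbation each $\gamma_{H_{i}}$ is a hyperplane, and the hypothesis guarantees that any two of them intersect in a subspace of dimension exactly $n-2$. By Proposition~\ref{is a flat}~(ii), each intersection $H_{s_{j}}\cap H_{t_{j}}$ involved in the principal extensions is a flat of rank $n-2$, so an additional perturbation of the vectors indexed by $F_{j}$, carried out inside the prescribed $(n-2)$-dimensional intersection, ensures $\operatorname{rank}(\restr{\gamma}{F_{j}})=n-2$ for every $j$.

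For the base case I would note that $\gamma^{(0)}\in V_{\mathcal{C}(M_{0})}$ is immediate from $\gamma\in V_{\mathcal{C}(M)}$, since $M_{0}$ is a submatroid of $M$. Moreover, because each extension element $a_{j}$ lies in $H_{s_{j}}\cap H_{t_{j}}$ and the Type~2 circuits of $M$ force $\gamma_{a_{j}}\in\gamma_{H_{s_{j}}}\cap\gamma_{H_{t_{j}}}$, the span of $\gamma^{(0)}$ over $H_{i}\cap E(M_{0})$ coincides with $\gamma_{H_{i}}$, and the distinctness hypothesis transfers to $M_{0}$. A Hall--Rado-type perturbation argument analogous to the one used in the proof of Lemma~\ref{rango n-1}, combined with the realizability of tame paving matroids from \textup{\cite[Theorem~4.19]{LIWSKI2026102484}}, should then allow one to move $\gamma^{(0)}$ inside the configuration space cut out by the hyperplanes $\gamma_{H_{i}}$ to land in $\Gamma_{M_{0}}$, yielding $\gamma^{(0)}\in V_{M_{0}}$. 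For the inductive lift I would proceed from $M_{0}$ upward: assuming $\restr{\gamma}{E(M_{j-1})}\in V_{M_{j-1}}$, Lemma~\ref{rank F} applies at the step $M_{j-1}\to M_{j}$ since $\restr{\gamma}{E(M_{j})}\in V_{\mathcal{C}(M_{j})}$ and the rank condition $\operatorname{rank}(\restr{\gamma}{F_{j}})=\operatorname{rank}(F_{j})=n-2$ was arranged in the preliminary perturbation; after $r$ iterations we conclude $\gamma\in V_{M}$. The main obstacle is the base case: rigorously verifying that a point of $V_{\mathcal{C}(M_{0})}$ whose hyperplane spans are pairwise distinct can indeed be promoted to a limit of genuine realizations of $M_{0}$, which requires a careful perturbation argument that simultaneously fulfills all imposed dependency constraints while avoiding the creation of any spurious ones.
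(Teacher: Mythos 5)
Your overall skeleton is the same as the paper's: invoke Proposition~\ref{extension} to present $M$ as a tame paving matroid $M_0$ followed by principal extensions along rank-$(n-2)$ flats, perturb so that the relevant intersections have rank exactly $n-2$, handle the tame paving base case, and then climb back up with Lemma~\ref{rank F}. The genuine gap is exactly the step you flag yourself: the base case. Showing that a point of $V_{\mathcal{C}(M_0)}$ whose dependent-hyperplane spans are not equal to a common hyperplane lies in $V_{M_0}$ is the substantive content of the lemma, and ``a Hall--Rado-type perturbation argument \dots should then allow one to move $\gamma^{(0)}$ \dots to land in $\Gamma_{M_0}$'' is not an argument; one must simultaneously keep all imposed dependencies and destroy all unwanted ones, and realizability of $M_0$ alone does not give this. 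The paper does not prove this from scratch either: it quotes \textup{\cite[Proposition~4.2]{Fatemeh4}}, which states precisely this fact for paving matroids whose hyperplane spans are pairwise distinct, and this citation is what closes the proof. Without that result (or a full proof of it) your argument is incomplete at its central point.

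Two smaller inaccuracies are worth fixing even if you import the missing proposition. First, Lemma~\ref{rango n-1} is stated only for tame paving matroids, so you cannot apply it verbatim to the quasi-paving $M$; moreover the hypothesis ``not equal to the same hyperplane'' is \emph{not} an open condition (two spans may coincide in dimension $n-2$, which is allowed, yet become the same hyperplane after a perturbation that inflates them), so you must choose the perturbation compatibly rather than appeal to openness. The paper avoids both issues by never forcing the spans to be hyperplanes: it simply chooses pairwise distinct hyperplanes $Z_i\supset\operatorname{span}(\gamma_p:p\in H_i)$, which the hypothesis permits. Second, your claim that $\operatorname{span}(\gamma_p:p\in H_i\cap E(M_0))$ coincides with $\gamma_{H_i}$ is unjustified (deleting the extension elements can shrink the span; membership in $V_{\mathcal{C}(M)}$ does not force $\gamma_{a_j}$ into the span of the remaining vectors of $H_{s_j}$, e.g.\ when those vectors are degenerate). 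What is actually needed, and what the paper uses, is only the weaker statement that the restricted spans are contained in the distinct hyperplanes $Z_i$, hence no two of them can equal a common hyperplane.
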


\begin{proof}
We show that the vectors of $\gamma$ can be perturbed to obtain $\widetilde{\gamma} \in \Gamma_{M}$. By Proposition~\ref{extension}, there exists a sequence of elements of the ground set, $p_{1}, \ldots, p_{r}$, such that the submatroid $M' = M \setminus \{p_{1}, \ldots, p_{r}\}$ is a tame paving matroid of rank $n$, and $M$ is obtained from $M'$ via principal extensions at $p_{1}, \ldots, p_{r}$.

For each $i \in [k]$, let $H_i' = H_i \setminus \{p_{1}, \ldots, p_{r}\}$. The dependent hyperplanes of $M'$ are precisely those sets $H_i'$ for which $\abs{H_i'} \geq n$. By assumption, for each $i \in [k]$, there exists a hyperplane $Z_i \subset \CC^n$ such that $\text{span}(\gamma_p : p \in H_i) \subset Z_i$ and $Z_i \neq Z_j$ for all $i \neq j$.

After a suitable perturbation of the vectors of $\gamma$, we may further ensure that
\[
\rank\{\gamma_p : p \in H_i' \cap H_j'\} = n - 2,
\]
for any $i, j \in [k]$ such that $\abs{H_i \cap H_j} \geq n - 2$. Since
\[
\text{span}(\gamma_p : p \in H_i') \subset \text{span}(\gamma_p : p \in H_i) \subset Z_i,
\]
it follows that the spans $\text{span}(\gamma_p : p \in H_i')$ and $\text{span}(\gamma_p : p \in H_j')$ are distinct for $i \neq j$. Hence, by~\textup{\cite[Proposition~4.2]{Fatemeh4}}, we conclude that $\restr{\gamma}{M'} \in V_{M'}$. Finally, applying Lemma~\ref{rank F} to the sequence of principal extensions at $p_1, \ldots, p_r$, we deduce that $\gamma \in V_M$.
\end{proof}

We now proceed with the proof of Theorem~\ref{Theorem}. To this end, recall Definition~\ref{submatroid of hyperplanes}.

\begin{proof}[{\bf Proof of Theorem~\ref{Theorem}}]
Let $n$ denote the rank of $M$. To establish the theorem, it suffices to verify three aspects: the irreducibility of the components, the non-redundancy of the decomposition, and the equality
\begin{equation}\label{deco teo12}
V_{\mathcal{C}(M)} = \bigcup_{\mathcal{Q}} V_{M(\mathcal{Q})},
\end{equation}
where the union ranges over all nice partitions $\mathcal{Q}$ of the dependent hyperplanes of $M$.

\medskip
\noindent
\textbf{Irreducibility of the Components:} By Corollary~\ref{coro ir}, each variety $V_{M(\mathcal{Q})}$ is irreducible, as $M(\mathcal{Q})$ is quasi-paving for every nice partition $\mathcal{Q}$.

\medskip
\noindent
\textbf{Non-Redundancy of the Decomposition:}  
To establish that the decomposition is non-redundant, suppose, for the sake of contradiction, that there exist two distinct nice partitions $\mathcal{Q}_{1} \neq \mathcal{Q}_{2}$ satisfying $V_{M(\mathcal{Q}_{1})} \subset V_{M(\mathcal{Q}_{2})}$.
It follows that $\Gamma_{M(\mathcal{Q}_{1})} \subset V_{M(\mathcal{Q}_{2})}$.
Let $\mathcal{H} = \{H_{1}, \ldots, H_{k}\}$ and $\mathcal{H}' = \{H_{1}', \ldots, H_{k}'\}$ be the hypergraphs defining $M(\mathcal{Q}_{1})$ and $M(\mathcal{Q}_{2})$, respectively, as in Definition~\ref{constr}.  
We proceed by considering two cases:

\medskip
\noindent
\textbf{Case~1.} Suppose that $\mathcal{Q}_{2}$ is not a refinement of $\mathcal{Q}_{1}$. Then there exist distinct $l_{1}, l_{2} \in \mathcal{L}$ that lie in the same part of $\mathcal{Q}_{2}$ but belong to different parts of $\mathcal{Q}_{1}$. Since $l_{1}$ and $l_{2}$ belong to the same element of $\mathcal{Q}_{2}$, we have 
\[
\rank(l_{1} \cup l_{2}) \leq n-1 \quad \text{in } M(\mathcal{Q}_{2}).
\]
On the other hand, by Lemma~\ref{rango}~(ii), it holds that 
\[
\rank(l_{1} \cup l_{2}) = n \quad \text{in } M(\mathcal{Q}_{1}).
\]
By Corollary~\ref{coro ir}, the matroid $M(\mathcal{Q}_{1})$ is realizable, so we may choose $\gamma \in \Gamma_{M(\mathcal{Q}_{1})}$. As $\gamma$ realizes $M(\mathcal{Q}_{1})$, it satisfies $\gamma_{l_{1}} + \gamma_{l_{2}} = \CC^{n}$.
However, since $\Gamma_{M(\mathcal{Q}_{1})} \subset V_{M(\mathcal{Q}_{2})}$, we also have $\gamma \in V_{M(\mathcal{Q}_{2})}$. Given that $\rank(l_{1} \cup l_{2}) \leq n-1$ in $M(\mathcal{Q}_{2})$, it follows that $\dim(\gamma_{l_{1}} + \gamma_{l_{2}}) \leq n-1$,
which contradicts the fact that $\gamma_{l_{1}} + \gamma_{l_{2}} = \CC^{n}$. This contradiction completes the argument for Case~1.
%Clearly, $\gamma\not \in V_{M(\mathcal{Q}_{2})}$ since $M(\mathcal{Q}_{1})\not \geq M(\mathcal{Q}_{2})$, which is a contradiction.

%Therefore, there exists a set of points $D$ such that $D\in \mathcal{D}(M(\mathcal{Q}_{2}))$ and $D\notin \mathcal{D}(M(\mathcal{Q}_{1}))$.  Since $\gamma \in V_{M(\mathcal{Q}_{2})}$, we have that the set $\{\gamma_{p}:p\in D\}$ is dependent, which is a contradiction since $\gamma \in \Gamma_{M(\mathcal{Q}_{1})}$ and $D\notin \mathcal{D}(M(\mathcal{Q}_{1}))$.  

\medskip
\noindent
\textbf{Case~2.} Now suppose that $\mathcal{Q}_{2}$ is a refinement of $\mathcal{Q}_{1}$. Then there exists a part $\mathcal{R} \in \mathcal{Q}_{1}$ such that $\mathcal{R} = \sqcup_{i=1}^{j} \mathcal{S}_{i}$,
where each $\mathcal{S}_{i} \in \mathcal{Q}_{2}$ and $j > 1$. In particular, there exist $l_{1}, l_{2} \in \mathcal{R}$ that belong to distinct subsets in $\mathcal{Q}_{2}$. By Lemma~\ref{rango}~(ii), we then obtain $\rank\left( \cup_{l \in \mathcal{R}} l \right) = n$ in $M(\mathcal{Q}_{2})$. %Since $M^{\mathcal{R}}$ is not liftable, %by Lemma~\ref{liftable}, 
%we can find $\gamma \in \Gamma_{M(\mathcal{Q}_{1})}$ such that the restriction of $\gamma$ to $\cup_{l\in \mathcal{R}}l$ is not liftable relative to $M^{\mathcal{R}}$. 

\medskip
\noindent
\textbf{Claim 1.} There exists $\gamma \in \Gamma_{M(\mathcal{Q}_{1})}$ whose restriction to $\cup_{l \in \mathcal{R}} l$ is not liftable with respect to $M^{\mathcal{R}}$. 

\medskip
Since $\mathcal{Q}_{1}$ is a nice partition, the matroid $M^{\mathcal{R}}$ is not liftable. Define  
\[
S = \{ p \in \cup_{l \in \mathcal{R}} l \,: \text{$p$ lies in two distinct dependent hyperplanes of $\mathcal{R}$}\},\]
and let $M^{\mathcal{R}}|S$ denote the restriction of $M^{\mathcal{R}}$ to $S$. By Lemma~\ref{liftable 2}~(i), the non-liftability of $M^{\mathcal{R}}$ implies that $M^{\mathcal{R}}|S$ is also not liftable. Moreover, since the elements of $S$ lie in two dependent hyperplanes of $\mathcal{R}$, they all belong to a single subset of the hypergraph $\mathcal{H}$; in fact, to the same such subset. Thus, as $M(\mathcal{Q}_{1})$ is realizable, we may select $\gamma \in \Gamma_{M(\mathcal{Q}_{1})}$ such that its restriction $\restr{\gamma}{S}$ is not liftable with respect to $M^{\mathcal{R}}|S$. We claim that this $\gamma$ satisfies the desired properties.

Suppose, for contradiction, that the restriction of $\gamma$ to $\cup_{l \in \mathcal{R}} l$ is liftable with respect to $M^{\mathcal{R}}$. Then there exists a lifting $(\tau_{p} : p \in \cup_{l \in \mathcal{R}} l) \in V_{\mathcal{C}(M^{\mathcal{R}})}$ from a vector $q \in \mathbb{C}^{n}$ such that $\rank(\tau) = n$. However, since $M^{\mathcal{R}}|S$ is not liftable, the vectors $(\tau_{p} : p \in S)$ must lie in a common hyperplane $H \subset \mathbb{C}^{n}$.

Now consider an arbitrary element $x \in \cup_{l \in \mathcal{R}} l \setminus S$, and let $M^{\mathcal{R}}|(S \cup \{x\})$ denote the restriction of $M^{\mathcal{R}}$ to $S \cup \{x\}$. Again by Lemma~\ref{liftable 2}~(ii), this matroid is not liftable. Hence $x$ has degree at least one in $M^{\mathcal{R}}|(S \cup \{x\})$, meaning there exists a dependent hyperplane $l_{1}$ of $M^{\mathcal{R}}|(S \cup \{x\})$ such that $x \in l_{1}$. This $l_{1}$ corresponds uniquely to a dependent hyperplane $\widetilde{l_{1}}$ of $M^{\mathcal{R}}$, and since $\mathcal{R}$ is precisely the collection of dependent hyperplanes of $M^{\mathcal{R}}$, it follows that $\widetilde{l_{1}} \in \mathcal{R}$.

We now claim that $\rank(l_{1}) = n - 1$ in $M(\mathcal{Q}_{1})$. Suppose not. Then, by the construction of $M(\mathcal{Q}_{1})$, there would exist another dependent hyperplane $l_{2} \notin \mathcal{R}$ such that $S \cup \{x\} \subset l_{2}$. This would imply that every point in $l_{1}$ has degree one in $M^{\mathcal{R}}|S$, lying exclusively in $l_{1}$. This contradicts the non-liftability of $\restr{\gamma}{S}$ with respect to $M^{\mathcal{R}}|S$, as a lifting could then be achieved by lifting the vectors $\{\gamma_{p} : p \in l_{1}\}$ to a common hyperplane, while leaving the remaining elements in $S \setminus l_{1}$ unchanged. We conclude that $\rank(l_{1}) = n - 1$ in $M(\mathcal{Q}_{1})$.

Since $\gamma$ realizes $M(\mathcal{Q}_{1})$ and $\rank(l_{1}) = n - 1$ in this matroid, it follows that $\rank\{\gamma_{p} : p \in l_{1}\} = n - 1$. Hence, $\rank\{\tau_{p} : p \in l_{1}\} = n - 1$ as well, and $\mathrm{span} (\tau_{p} : p \in l_{1}) = H$. In particular, since $x \in l_{1}$, we have $\tau_{x} \in H$. But $x$ was arbitrary in $\cup_{l \in \mathcal{R}} l \setminus S$, so we deduce that $\{\tau_{p} : p \in \cup_{l \in \mathcal{R}} l\} \subset H$, contradicting the fact that $\rank(\tau) = n$. This establishes the claim.

\medskip

%\begin{lemma}\label{liftable}
%Let $M$ be a matroid of rank $n$ and %over the ground set $[d]$. 
%let $\gamma\in V_{\mathcal{C}(M)}$ with $\rank(\gamma)=n-1$. Suppose that there exists $N\geq M$ with $\rank(N)=n$ and $\gamma \in V_{N}$. Then, for any vector $q\in \CC^{n}$ outside $\gamma$, the vectors of $\gamma$ are liftable from $q$.
%\end{lemma}

%\begin{proof}
%Since $\gamma \in V_{N}$, we know that for any $\epsilon>0$ there exists $\gamma(\epsilon)\in \Gamma_{N}$ with $\size{\gamma-\gamma(\epsilon)}<\epsilon $. Since $\gamma(\epsilon) \in V_{\mathcal{C}(M)}$ and $\rank(\gamma(\epsilon))=n$, we have that $\gamma(\epsilon)$ is liftable from $q$. Then, by \textup{\cite[Lemma~4.27]{Fatemeh5}}, we have that the $(\size{M}-n+1)$-minors of the matrix $\mathcal{M}_{q}^{\gamma(\epsilon)}(M)$ are zero. Taking limit $\epsilon \rightarrow 0$, we deduce that the $(\size{M}-n+1)$-minors of the matrix   $\mathcal{M}_{q}^{\gamma}(M)$ are zero. Applying again \textup{\cite[Lemma~4.27]{Fatemeh5}}, we have that the vectors of $\gamma$ are liftable from $q$.
%\end{proof}

Let $\gamma \in \Gamma_{M(\mathcal{Q}_{1})}$ be as in Claim~1. Since $\Gamma_{M(\mathcal{Q}_{1})} \subset V_{M(\mathcal{Q}_{2})}$, it follows that $\gamma \in V_{M(\mathcal{Q}_{2})}$. Moreover, as $\rank\left(\cup_{l \in \mathcal{R}} l\right) = n$ in $M(\mathcal{Q}_{2})$, the restriction of any realization of $M(\mathcal{Q}_{2})$ to $\cup_{l \in \mathcal{R}} l$ is liftable relative to $M^{\mathcal{R}}$. Since liftability is a closed condition and $\gamma \in V_{M(\mathcal{Q}_{2})}$, it follows that the restriction of $\gamma$ to $\cup_{l \in \mathcal{R}} l$ must also be liftable relative to $M^{\mathcal{R}}$, contradicting the defining property of $\gamma$. This contradiction confirms that the decomposition is non-redundant.

%Since $\rank_{M(\mathcal{Q}_{2})}(\cup_{l\in \mathcal{R}}l)=n$ and $M(\mathcal{Q}_{2})\geq M$, and $\restr{\gamma}{M^{\mathcal{R}}}\in V_{M(\mathcal{Q}_{2})[M^{\mathcal{R}}]}$, we have by Lemma~\ref{liftable}, we have that $\restr{\gamma}{M^{\mathcal{R}}}$ is liftable relative to $M^{\mathcal{R}}$. We arrived at a contradiction, hence the decomposition is non-redundant. 

\medskip
\noindent
\textbf{Equality in \eqref{deco teo12}.} We now prove the equality in \eqref{deco teo12}. By Lemma~\ref{mayor igual}, we have $M(\mathcal{S}) \geq M$ for any partition $\mathcal{S}$, which implies that $V_{M(\mathcal{S})} \subset V_{\mathcal{C}(M)}$. This yields the inclusion $\supset$. 

Now let $\gamma \in V_{\mathcal{C}(M)}$. Our goal is to show that $\gamma$ lies in the variety on the right-hand side of \eqref{deco teo12}. By Lemma~\ref{rango n-1}, we may assume that $\rank(\gamma_{l}) = n-1$ for every $l \in \mathcal{L}$. 

Applying \textup{\cite[Proposition~4.26]{Fatemeh4}}, we can perturb $\gamma$ to obtain a tuple $\tau \in V_{\mathcal{C}(M)}$ satisfying the following condition:

\begin{itemize}
\item Let $\mathcal{Q} = \{Q_{1}, \ldots, Q_{k}\}$ be the partition of $\mathcal{L}$ such that $l_1, l_2 \in \mathcal{L}$ lie in the same subset of $\mathcal{Q}$ if and only if $\tau_{l_1} = \tau_{l_2}$. Then, for every $i \in [k]$ such that $\size{Q_i} > 2$, the restriction of $\tau$ to $\cup_{l \in Q_i} l$ is not liftable relative to $M^{Q_i}$.
\end{itemize}

\noindent
\textbf{Claim 2.} The partition $\mathcal{Q}$ is nice.

\medskip
Since the restriction of $\tau$ to $\cup_{l \in Q_i} l$ is not liftable, it follows that $M^{Q_i}$ is not liftable for every $i \in [k]$. Hence, the first condition of Definition~\ref{nice} is satisfied.

Now, suppose there exists a line $l \notin Q_i$ such that $l \subset \cup_{l' \in Q_i} l'$. Then $\tau_l = \tau_{l'}$ for every $l' \in Q_i$, which would imply that $l \in Q_i$, contradicting the assumption that $l \notin Q_i$. Thus, the second condition of Definition~\ref{nice} also holds. We conclude that $\mathcal{Q}$ is nice.

\medskip
\noindent
\textbf{Claim 3.} We have $\tau \in V_{\mathcal{C}(M(\mathcal{Q}))}$, and the hyperplanes
\[
\operatorname{span}(\tau_p : p \in \cup_{l \in Q_i} l) \quad \text{and} \quad \operatorname{span}(\tau_p : p \in \cup_{l \in Q_j} l)
\]
are distinct for any $i \neq j \in [k]$.

\medskip
The second part follows from the fact that the hyperplanes in $Q_i$ and $Q_j$ belong to distinct parts of the partition $\mathcal{Q}$, hence their associated spans differ. To show that $\tau \in V_{\mathcal{C}(M(\mathcal{Q}))}$, two conditions need to be checked:

\medskip
\noindent
\textit{(i)} Let $C$ be a set of $n-1$ points in $(\cup_{l \in Q_i} l) \cap (\cup_{l \in Q_j} l)$. Then,
\[
\{\tau_p : p \in C\} \subset \operatorname{span}(\tau_p : p \in \cup_{l \in Q_i} l) \cap \operatorname{span}(\tau_p : p \in \cup_{l \in Q_j} l),
\]
which implies
\[
\rank\{\tau_p : p \in C\} \leq n - 2,
\]
and thus $\{\tau_p : p \in C\}$ is dependent.

\medskip
\noindent
\textit{(ii)} Let $C$ be a set of $n$ points in $\cup_{l \in Q_i} l$. Since
\[
\rank\{\tau_p : p \in \cup_{l \in Q_i} l\} = n - 1,
\]
the set $\{\tau_p : p \in C\}$ is also dependent.

\medskip
These two conditions verify that $\tau \in V_{\mathcal{C}(M(\mathcal{Q}))}$. Finally, by Lemma~\ref{prop no iguales}, we conclude that $\tau \in V_{M(\mathcal{Q})}$. Since $\tau$ is a perturbation of $\gamma$, this completes the proof of the reverse inclusion.
\end{proof}

%\section{Conditional independence models}

\vspace{-3mm}

%\bibliographystyle{plain}

%\bibliography{Citation}

\printbibliography

\medskip
{\footnotesize\noindent {\bf Author's addresses}
\medskip

\noindent{Emiliano Liwski, 
KU Leuven}\hfill {\tt  emiliano.liwski@kuleuven.be}

\end{document}
.

\begin{lemma}\label{suf 3-4}
Let $S=\{a_{1},\ldots,a_{k}\}$ be a multiset of numbers of $[n]$. 
\begin{itemize}
    \item[{\rm (i)}] Assume $S_{[k]}=S_{[k]}^{n}>0$. Let $V_{1},\ldots V_{k}$ be subspaces of $\CC^{n}$ such that for every $\emptyset \subsetneq L\subset [k]$ we have 
$\dim(V_{L})\leq S_{L}$.
Then, there exists subspaces $W_{1},\ldots,W_{k}$ such that 
\[
\dim(W_{i})=a_{i},\ V_{i}\subset W_{i},\ \text{ {\rm for every} $i$\quad {\rm and } }\ \dim(W_{[k]})\ = S_{[k]}.
\]

 \item[{\rm (ii)}] Let $C_{1},\ldots,C_{d}$ be non-empty subsets of $[k]$ with $d\leq n$. Suppose that
\begin{itemize}
\item For each $L\subset [k]$ 
%included in some $C_{j}$ 
there are at most $S_{L}$ numbers of $[d]$ such that $L\subset C_{i}$.
\item $S_{C_{i}}=S_{C_{i}}^{n}>0$ for every $i\in [d]$. 
\end{itemize}
Then there exist subspaces $W_{1},\ldots ,W_{k}$ of $\CC^{n}$ such that 
\begin{itemize}
    \item[{\rm (a)}] $\dim(W_{i})=a_{i},\ \text{{\rm for every $i\in [k]$,}} \quad \dim(W_{C_{i}})=S_{C_{i}}\ \text{{\rm for every $i\in [d]$}} \text{ and }$

    \item[{\rm (b)}] the subspaces
    $W_{C_{1}},\ldots,W_{C_{d}}$
have the \textit{choice property}.
\end{itemize}
Moreover, any collection $V_{1},\ldots ,V_{k}$ of subspaces of $\CC^{n}$ with $\dim(V_{i})=a_{i}$, can be infinitesimally perturbed to obtain subspaces $\widetilde{V}_{1},\ldots,\widetilde{V}_{k}$ satisfying {\rm (a)} and {\rm (b)}. %with $\dim (\widetilde{V}_{C_{j}})=S_{C_{j}}$ for all $j\in [d]$ such that the subspaces $\widetilde{V}_{C_{1}},\ldots,\widetilde{V}_{C_{d}}$ have the \textit{choice property}.
\end{itemize}
\end{lemma}